\documentclass[preprint,12pt]{elsarticle}
\usepackage[T1]{fontenc}
\usepackage[utf8]{inputenc}
\usepackage{lmodern}
\usepackage{amsmath,amssymb,amsthm,mathtools,mathrsfs,bm}
\usepackage{enumitem}
\usepackage{microtype}
\usepackage{hyperref}
\usepackage[nameinlink,capitalize]{cleveref}
\hypersetup{colorlinks=true,linkcolor=blue,citecolor=blue,urlcolor=blue}

\newtheorem{theorem}{Theorem}[section]
\newtheorem{proposition}[theorem]{Proposition}
\newtheorem{corollary}[theorem]{Corollary}
\newtheorem{lemma}[theorem]{Lemma}
\theoremstyle{definition}
\newtheorem{definition}[theorem]{Definition}
\newtheorem{remark}[theorem]{Remark}
\newtheorem{example}[theorem]{Example}

\newcommand{\R}{\mathbb R}

\newcommand{\hookc}{\mathrel{\hookrightarrow\!\hookrightarrow}}

\journal{Journal of Functional Analysis}

\begin{document}
\begin{frontmatter}
\title{Compact Embeddings of Vector-Valued Morrey Spaces}
\author[cp]{Rishad Shahmurov}
\ead{rshahmurov@crimson.ua.edu}
\author[abu]{Veli Shahmurov\corref{cor1}}
\ead{veli.sahmurov@antalya.edu.tr}
\cortext[cor1]{Corresponding author}
\address[cp]{Cellular Products Research and Development, 595 East Crossville Road, Roswell, GA 30075, USA}
\address[abu]{Antalya Bilim University, Antalya, Turkey}

\begin{abstract}
We develop compactness and defect-of-compactness results for Banach-valued evolution classes with Morrey control in time.  For
\[
\begin{aligned}
\mathbb W_M^{p,\lambda}(0,T;E_0,E_1)
=\{u\in\mathcal M^{p,\lambda}(0,T;E_0):\;&
 u'\in\mathcal M^{p,\lambda}(0,T;E_1)\},\\[-1mm]
&0<\lambda<1.
\end{aligned}
\]
the exact trace exponent
\[
\theta=1-\frac{1-\lambda}{p}
\]
governs both continuity and compactness.  If $E_0\hookrightarrow\!\hookrightarrow E\hookrightarrow E_1$, bounded sets are compact in the \emph{same} Morrey space $\mathcal M^{p,\lambda}(0,T;E)$, not merely in $L^p(0,T;E)$.  In contrast, compactness in $C([0,T];E)$ holds if and only if the exact trace space $(E_1,E_0)_{\theta,\infty}$ embeds compactly into $E$.  We also obtain compact lower-order H\"older embeddings and a sharp Hilbert-triple threshold.

On unbounded domains we prove a tightness criterion for global Morrey compactness.  In the Hilbert-valued case we establish, by a direct time-averaging argument, cocompactness modulo spatial translations and a translation profile decomposition whose remainder vanishes in every strictly subcritical time-Morrey--Sobolev target; a uniform little-Morrey condition removes the loss in the time exponent.  At the doubly critical endpoint, heat and whole-space Stokes dynamics force balanced parabolic profiles, and the remainder vanishes in $\mathcal M_t^{2,\lambda}L_x^{2^*}$.  Finally, in three-dimensional Navier--Stokes we show that the classical nonlinear profile decomposition has a scale-sensitive Morrey refinement: the remainder is small in $\mathcal M_t^{p,1-p/4}L_x^6$ for every $2\le p<4$, and orthogonal profiles have vanishing interaction in the corresponding Morrey forcing space.
\end{abstract}

\begin{keyword}
vector-valued Morrey spaces \sep compact embeddings \sep Aubin--Lions compactness \sep profile decomposition \sep trace spaces \sep evolution equations
\MSC[2020] 46E35 \sep 46B70 \sep 47D06 \sep 35B65 \sep 35K90 \sep 46M35
\end{keyword}
\end{frontmatter}

\section{Introduction}

Compactness is one of the main mechanisms by which a priori estimates for evolution equations become existence theorems.  The classical Aubin--Lions lemma and Simon's refinement combine a compact spatial embedding with time regularity to obtain strong compactness in Bochner spaces; see Aubin~\cite{Aubin1963} and Simon~\cite{Simon1987}.  Rossi--Savar\'e~\cite{RossiSavare2003} subsequently placed this circle of ideas in a tightness and integral-equicontinuity framework.  These theories are naturally formulated in $L^p(0,T;E)$.

The aim of this paper is to understand what changes when the time variable is controlled in a Morrey norm.  Such a bound retains scale information on every time interval rather than only a global integral.  We show that this additional information survives compactness and, at the same time, reveals a sharp distinction between compactness in a Morrey topology and compactness at each time.

Let
\[
\mathbb W_M^{p,\lambda}(0,T;E_0,E_1)
=\left\{u\in\mathcal M^{p,\lambda}(0,T;E_0):
 u'\in\mathcal M^{p,\lambda}(0,T;E_1)\right\},
\]
where $E_0\hookrightarrow E_1$, $1<p<\infty$, and $0<\lambda<1$.  Put
\[
\theta=1-\frac{1-\lambda}{p},
\qquad
X_\theta=(E_1,E_0)_{\theta,\infty}.
\]
The trace theorem~\cite{ShahmurovTrace2026} identifies $X_\theta$ as the exact trace space and supplies a bounded extension of arbitrary trace data.

Our first main result is a Morrey-strengthened Aubin--Lions theorem.  If
\[
E_0\hookrightarrow\!\hookrightarrow E\hookrightarrow E_1,
\]
then
\[
\mathbb W_M^{p,\lambda}(0,T;E_0,E_1)
\hookrightarrow\!\hookrightarrow
\mathcal M^{p,\lambda}(0,T;E).
\]
Thus strong convergence takes place in the \emph{same} scale-sensitive Morrey norm.  The compactness literature around Aubin--Lions is predominantly formulated in Bochner, fractional-time, weighted, or evolving-space topologies; the result below is instead a same-time-Morrey conclusion and is proved by upgrading the classical $L^p$ compactness through the Morrey time modulus.

Pointwise-in-time compactness has a different exact criterion.  Whenever $E\hookrightarrow E_1$,
\[
\boxed{
\mathbb W_M^{p,\lambda}(0,T;E_0,E_1)
\hookrightarrow\!\hookrightarrow C([0,T];E)
\quad\Longleftrightarrow\quad
X_\theta\hookrightarrow\!\hookrightarrow E.
}
\]
The sufficiency follows from the trace estimate, Ehrling's lemma, and Arzel\`a--Ascoli.  The necessity uses the trace extension: every noncompact bounded sequence in $X_\theta$ lifts to a bounded sequence of Morrey evolutions with the same defect at $t=0$.  Hence the exact trace space is not merely a regularity space; it is the complete obstruction to $C_tE$ compactness.

Below the trace level we obtain the mixed scale
\[
\mathbb W_M^{p,\lambda}
\hookrightarrow
C^{0,\theta-\eta}\bigl([0,T];(E_1,E_0)_{\eta,\infty}\bigr),
\qquad 0\le\eta<\theta,
\]
and compact lower-order variants.  In a Hilbert triple $V\hookrightarrow\!\hookrightarrow H\hookrightarrow V^*$ this yields the sharp abstract threshold $\theta>1/2$ for compactness in $C([0,T];H)$.

On $\mathbb R^d$, local compactness is not global compactness because mass may escape by translation.  We prove a spatial tightness theorem in the same time-Morrey norm and then pass from compactness to defect-of-compactness.  For Hilbert-valued evolution classes we give a direct proof of cocompactness modulo translations.  Combined with a self-contained Hilbert translation-profile extraction, this yields profiles, Morrey--Bessel bounds on every time interval, and a remainder small in
\[
\mathcal M_t^{2,\mu}L_x^q,
\qquad \mu<\lambda,\quad 2<q<2^*.
\]
A uniform little-Morrey condition removes the loss $\mu<\lambda$.

The abstract language of cocompactness and profile decompositions is well developed; see Cwikel--Tintarev~\cite{CwikelTintarev2013} and Solimini--Tintarev~\cite{SoliminiTintarev2015}.  Those frameworks do not by themselves provide the concrete time-Morrey cocompact embedding above.  The analytic step here is the time-averaging estimate that converts translation-weak nullity in the Hilbert evolution space into strong $L_t^2L_x^q$ convergence, followed by a Morrey upgrade.

At the critical spatial endpoint, translations are no longer the whole defect group.  G\'erard's Sobolev profile theorem~\cite{Gerard1998} gives the precise translation--dilation decomposition of bounded homogeneous Hilbert-Sobolev sequences.  For heat and whole-space Stokes flows we show that parabolic dynamics lift G\'erard's critical Lebesgue-small remainder to the fully critical time-Morrey target
\[
\mathcal M_t^{2,\lambda}L_x^{2^*}.
\]
Thus the equation collapses the independent space/time defects of an arbitrary Morrey evolution to the balanced parabolic scale.

Finally, in three dimensions we compare with Gallagher's nonlinear Navier--Stokes profile theorem~\cite{Gallagher2001}.  We do not reprove the classical lifespan decomposition.  Instead we show that its two error mechanisms are automatically small in the full critical Morrey--Kato segment
\[
Y_p=\mathcal M_t^{p,1-p/4}L_x^6,
\qquad 2\le p<4.
\]
We also prove directly, in the Morrey forcing topology, that products of two orthogonal nonlinear profiles vanish.  For $2<p<4$ the corresponding Duhamel estimate is exactly the strong Morrey fractional-integral theorem of Adams~\cite{Adams1975}; the endpoint $p=2$ remains a valid profile-remainder topology but is not obtained from the same strong contraction estimate.

Spatial compact embeddings in smoothness Morrey scales constitute a separate literature; see, for example, \cite{GoncalvesHaroskeSkrzypczak2021}.  Our results concern a different object: Banach-valued \emph{evolution} spaces with Morrey control in time, exact trace-driven compactness, and evolution profile remainders measured in time-Morrey norms.

The paper is organized as follows.  Section~\ref{sec:prelim} records the evolution spaces and the trace input.  Sections~\ref{sec:mixed}--\ref{sec:lower-order} establish the continuous and compact embedding theory.  Section~\ref{sec:tightness} treats unbounded spatial domains.  Section~\ref{sec:profiles} develops Hilbert-valued cocompactness, profile decomposition, and the resulting trichotomy.  Section~\ref{sec:critical-heat-profiles} treats critical heat and Stokes profiles, and Section~\ref{sec:ns-morrey-profiles} gives the Navier--Stokes Morrey refinement.

\section{Morrey evolution spaces and trace input}\label{sec:prelim}

Let $I=(0,T)$ with $0<T<\infty$.  For a Banach space $X$, $1<p<\infty$, and $0\le\lambda<1$, define
\[
\|f\|_{\mathcal M^{p,\lambda}(I;X)}
:=
\sup_{J\subset I}
|J|^{-\lambda/p}
\left(\int_J\|f(t)\|_X^p\,dt\right)^{1/p},
\]
where the supremum is over nonempty intervals $J\subset I$.

\begin{definition}\label{def:evolution-space}
Let $E_0\hookrightarrow E_1$ continuously.  We set
\[
\mathbb W_M^{p,\lambda}(I;E_0,E_1)
:=
\left\{
 u\in\mathcal M^{p,\lambda}(I;E_0):
 u'\in\mathcal M^{p,\lambda}(I;E_1)
\right\},
\]
with graph norm
\[
\|u\|_{\mathbb W_M^{p,\lambda}}
=
\|u\|_{\mathcal M^{p,\lambda}(I;E_0)}
+
\|u'\|_{\mathcal M^{p,\lambda}(I;E_1)}.
\]
Throughout the genuinely Morrey regime we put
\[
\theta:=1-\frac{1-\lambda}{p},
\qquad 0<\lambda<1,
\]
and
\[
X_{\eta,q}:=(E_1,E_0)_{\eta,q},
\qquad 0<\eta<1,
\]
with $X_{0,q}=E_1$ by convention.  We abbreviate $X_\eta:=X_{\eta,\infty}$.
\end{definition}

The finite interval itself is admissible in the Morrey supremum, hence
\begin{equation}\label{eq:morrey-to-lp}
\|f\|_{L^p(I;X)}
\le T^{\lambda/p}\|f\|_{\mathcal M^{p,\lambda}(I;X)}.
\end{equation}
In particular every element of $\mathbb W_M^{p,\lambda}$ belongs to $W^{1,p}(I;E_1)$ after using $E_0\hookrightarrow E_1$, and therefore has an $E_1$-valued absolutely continuous representative.

We shall use the exact trace theorem proved in~\cite{ShahmurovTrace2026}.  We record precisely the two features needed here.

\begin{theorem}\label{thm:trace-input}
Let $E_0\hookrightarrow E_1$, $1<p<\infty$, and $0<\lambda<1$.  Then
\[
\operatorname{Tr}_0\mathbb W_M^{p,\lambda}(I;E_0,E_1)
=X_\theta=(E_1,E_0)_{\theta,\infty}.
\]
The trace map is bounded and onto.  Moreover, every $x\in X_\theta$ admits an extension $R x\in\mathbb W_M^{p,\lambda}(I;E_0,E_1)$ such that
\[
(Rx)(0)=x,
\qquad
\|Rx\|_{\mathbb W_M^{p,\lambda}}\le C\|x\|_{X_\theta}.
\]
Here $R$ denotes a fixed bounded extension assignment; linearity is not required for the compactness arguments below.
Moreover every $u\in\mathbb W_M^{p,\lambda}$ has an $E_1$-continuous representative satisfying
\begin{align}
\sup_{t\in[0,T]}\|u(t)\|_{X_\theta}
&\le C\|u\|_{\mathbb W_M^{p,\lambda}},\label{eq:trace-uniform}\\
\|u(t)-u(s)\|_{E_1}
&\le C|t-s|^\theta\|u'\|_{\mathcal M^{p,\lambda}(I;E_1)}.
\label{eq:e1-holder}
\end{align}
\end{theorem}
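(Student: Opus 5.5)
The plan is to work directly from the definition of $\mathcal M^{p,\lambda}$ and the $K$-functional description of $X_\theta=(E_1,E_0)_{\theta,\infty}$, namely $\|x\|_{X_\theta}\simeq\sup_{\tau>0}\tau^{-\theta}K(\tau,x)$. We prove the two estimates \eqref{eq:trace-uniform}--\eqref{eq:e1-holder} first. These give boundedness of the trace map into $X_\theta$. We then build the extension $R$, which gives surjectivity. The single computation that makes everything fit is the identity
\[
1-\tfrac1p+\tfrac{\lambda}{p}=\theta .
\]

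\textbf{Step 1: the H\"older estimate.} By \eqref{eq:morrey-to-lp}, $u\in W^{1,p}(I;E_1)$, so it has an absolutely continuous $E_1$-representative. For $s<t$ put $J=(s,t)$. H\"older's inequality gives
\[
\|u(t)-u(s)\|_{E_1}\le\int_J\|u'\|_{E_1}\le |J|^{1-1/p}\Big(\int_J\|u'\|_{E_1}^p\Big)^{1/p}\le |J|^{1-1/p+\lambda/p}\,\|u'\|_{\mathcal M^{p,\lambda}(I;E_1)},
\]
and the exponent equals $\theta$, which is \eqref{eq:e1-holder}.

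\textbf{Step 2: the uniform trace bound.} Fix $t\in[0,T]$ and $0<h\le T$. Choose an interval $J_h\subset I$ of length $h$ containing $t$, and split
\[
u(t)=a+b,\qquad a=\frac1h\int_{J_h}u\,ds\in E_0,\qquad b=u(t)-a .
\]
H\"older and the Morrey bound give $\|a\|_{E_0}\le h^{-1/p}h^{\lambda/p}\|u\|_{\mathcal M^{p,\lambda}(E_0)}=h^{\theta-1}\|u\|$. Since $b$ is an average of the differences $u(t)-u(s)$ with $s\in J_h$, Step 1 gives $\|b\|_{E_1}\le Ch^\theta\|u'\|$.

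For $\tau\le T$ take $h=\tau$. This yields $K(\tau,u(t))\le\|b\|_{E_1}+\tau\|a\|_{E_0}\le C\tau^\theta\|u\|_{\mathbb W_M^{p,\lambda}}$. For $\tau>T$ we use instead $K(\tau,u(t))\le\|u(t)\|_{E_1}$. This is bounded by the $W^{1,p}(I;E_1)\hookrightarrow C(\bar I;E_1)$ embedding combined with \eqref{eq:morrey-to-lp}, and it is $\le C_T\tau^\theta\|u\|$ once $\tau>T$. Taking the supremum over $\tau$ gives \eqref{eq:trace-uniform}. In particular $u(0)\in X_\theta$ and the trace map is bounded.

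\textbf{Step 3: the extension (the main obstacle).} Given $x\in X_\theta$, we need a measurable choice of near-optimal decompositions. For each dyadic $s\in(2^{-k-1},2^{-k}]$ we use one fixed pair chosen at $\tau=2^{-k}$. This produces a measurable, piecewise constant $a(s)\in E_0$ with
\[
\|a(s)\|_{E_0}\lesssim s^{\theta-1}\|x\|_{X_\theta},\qquad \|x-a(s)\|_{E_1}\lesssim s^{\theta}\|x\|_{X_\theta}.
\]
Define $Rx(t)=\frac1t\int_t^{2t}a(s)\,ds$, and use the same formula on $(0,T)$ with $a$ defined on $(0,2T)$. Then $\|Rx(t)\|_{E_0}\lesssim t^{\theta-1}\|x\|$, and $Rx(t)\to x$ in $E_1$ as $t\to0$. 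Differentiating,
\[
(Rx)'(t)=\frac{2(a(2t)-x)-(a(t)-x)}{t}-\frac1{t^2}\int_t^{2t}(a(s)-x)\,ds,
\]
so $\|(Rx)'(t)\|_{E_1}\lesssim t^{\theta-1}\|x\|$.

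It remains to check that $t^{\theta-1}\in\mathcal M^{p,\lambda}(0,T)$. Here $(\theta-1)p=\lambda-1>-1$, and since the function is decreasing, the worst interval is $J=(0,h)$. On it $\int_0^h t^{\lambda-1}\,dt=h^\lambda/\lambda$, so the Morrey norm is $\lambda^{-1/p}$. Hence $\|Rx\|_{\mathbb W_M^{p,\lambda}}\le C\|x\|_{X_\theta}$ and $(Rx)(0)=x$. Together with Step 2 this gives $\operatorname{Tr}_0\mathbb W_M^{p,\lambda}=X_\theta$, with bounded, onto trace map.

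The delicate points are the measurable selection and the justification of the differentiation formula for piecewise constant $a$. Both are handled because $a$ is locally integrable in $E_0$ away from $0$, so $Rx$ is locally absolutely continuous there. Neither step requires $R$ to be linear.
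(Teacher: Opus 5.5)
Your proposal is correct and follows essentially the route the paper indicates. The paper imports this theorem from \cite{ShahmurovTrace2026}, but Remark~\ref{rem:trace-estimate-direct} sketches exactly your H\"older computation, together with an extension built from near-minimizing $K$-decompositions at dyadic scales whose value and derivative are dominated by $Ct^{-(1-\lambda)/p}\|x\|_{X_\theta}$, a power lying in $\mathcal M^{p,\lambda}(0,T)$. The differences are minor: you smooth the dyadic strong components by the average $\frac1t\int_t^{2t}a(s)\,ds$ instead of interpolating them linearly between dyadic nodes (measurability is automatic either way, since only countably many choices are made), and you supply the standard mean-value splitting proof of \eqref{eq:trace-uniform}, which the paper takes from the reference without proof.
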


\begin{remark}\label{rem:trace-estimate-direct}
The H\"older estimate \eqref{eq:e1-holder} is immediate from H\"older's inequality and the Morrey bound:
\[
\|u(t)-u(s)\|_{E_1}
\le |t-s|^{1-1/p}
\left(\int_s^t\|u'(r)\|_{E_1}^p\,dr\right)^{1/p}
\le C|t-s|^\theta\|u'\|_{\mathcal M^{p,\lambda}}.
\]
The nontrivial input in Theorem~\ref{thm:trace-input} is the exact pointwise space $X_\theta$ and the bounded trace extension.  The extension used in~\cite{ShahmurovTrace2026} is obtained by an explicit dyadic construction: for $x\in X_\theta$ one chooses near-minimizing $K$-functional decompositions at the scales $T2^{-n}$ and linearly interpolates their strong components on the dyadic time intervals.  Both the extension and its derivative are then pointwise dominated by $Ct^{-(1-\lambda)/p}\|x\|_{X_\theta}$, and this scalar power belongs to $\mathcal M^{p,\lambda}(0,T)$.  Thus the compactness arguments below require neither complementability of $X_\theta$ nor a linear coretraction.  The existence of this bounded extension is what makes the compactness criterion in Section~\ref{sec:exact-C} necessary as well as sufficient.
\end{remark}

We also use the elementary interpolation inequality supplied by the reiteration theorem.  If $0\le\eta<\sigma<1$, then
\begin{equation}\label{eq:reiteration-ineq}
\|x\|_{X_\eta}
\le C
\|x\|_{E_1}^{1-\eta/\sigma}
\|x\|_{X_\sigma}^{\eta/\sigma},
\qquad x\in X_\sigma.
\end{equation}

\section{Continuous mixed embeddings}\label{sec:mixed}

The first consequence of the trace theorem is a full time/interpolation scale below the exact trace level.

\begin{theorem}\label{thm:mixed-embedding}
Let the assumptions of Theorem~\ref{thm:trace-input} hold.  For every $0\le\eta<\theta$,
\[
\mathbb W_M^{p,\lambda}(I;E_0,E_1)
\hookrightarrow
C^{0,\theta-\eta}([0,T];X_\eta).
\]
More precisely,
\[
\|u\|_{C^{0,\theta-\eta}([0,T];X_\eta)}
\le C_{p,\lambda,T,\eta}
\|u\|_{\mathbb W_M^{p,\lambda}}.
\]
\end{theorem}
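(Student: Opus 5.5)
The plan is to combine the two estimates of Theorem~\ref{thm:trace-input} through the reiteration inequality \eqref{eq:reiteration-ineq}, taking $\sigma=\theta$. Fix $0\le\eta<\theta$ and let $u\in\mathbb W_M^{p,\lambda}$ denote its $E_1$-continuous representative.

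\textbf{Sup bound.} By \eqref{eq:trace-uniform} we have $u(t)\in X_\theta$ for every $t$, with $\|u(t)\|_{X_\theta}\le C\|u\|_{\mathbb W_M^{p,\lambda}}$. Since $X_\theta\hookrightarrow X_\eta$ (for $\eta=0$ this is $X_\theta\hookrightarrow E_1$; for $\eta>0$ it also follows from \eqref{eq:reiteration-ineq} together with $X_\theta\hookrightarrow E_1$), we get
\[
\sup_{t}\|u(t)\|_{X_\eta}\le C\|u\|_{\mathbb W_M^{p,\lambda}}.
\]

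\textbf{H\"older seminorm.} For $s,t\in[0,T]$ the difference $u(t)-u(s)$ lies in $X_\theta$. Apply \eqref{eq:reiteration-ineq} with $\sigma=\theta$ to it. Bound the $E_1$ factor by \eqref{eq:e1-holder}, namely $C|t-s|^\theta\|u\|_{\mathbb W_M^{p,\lambda}}$, and the $X_\theta$ factor by $2\sup_r\|u(r)\|_{X_\theta}\le C\|u\|_{\mathbb W_M^{p,\lambda}}$. This gives
\[
\|u(t)-u(s)\|_{X_\eta}\le C\bigl(|t-s|^{\theta}\bigr)^{1-\eta/\theta}\|u\|_{\mathbb W_M^{p,\lambda}}=C|t-s|^{\theta-\eta}\|u\|_{\mathbb W_M^{p,\lambda}}.
\]
The exponents match exactly, which explains the stated index $\theta-\eta$. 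The constant depends on $p,\lambda,T$ through Theorem~\ref{thm:trace-input} and on $\eta$ through the reiteration constant.

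\textbf{Continuity into $X_\eta$.} The H\"older estimate already shows that $t\mapsto u(t)$ is continuous with values in $X_\eta$, so $u\in C^{0,\theta-\eta}([0,T];X_\eta)$, and adding the two bounds gives the norm inequality. For $\eta=0$ the claim is just \eqref{eq:e1-holder} plus the sup bound.

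The only point needing care is that \eqref{eq:trace-uniform} holds for \emph{every} $t$, not just almost every $t$. This is needed so that differences at arbitrary pairs $s,t$ lie in $X_\theta$. Theorem~\ref{thm:trace-input} states it for the $E_1$-continuous representative. If one wanted to verify it directly, one would apply the trace theorem to the time-shifted and restricted function on $(t,T)$ or $(0,t)$, using the fact that Morrey norms do not increase under restriction. This requires uniformity of the trace constant in the interval length, which I would handle by using a fixed subinterval length $\min(t,T-t)\ge$ something, or simply rely on the stated result.
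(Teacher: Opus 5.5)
Your proposal is correct and is essentially the paper's proof: the uniform $X_\theta$ bound from \eqref{eq:trace-uniform}, the $E_1$-H\"older bound \eqref{eq:e1-holder} applied to $u(t)-u(s)$, the reiteration inequality \eqref{eq:reiteration-ineq} with $\sigma=\theta$ to get the exponent $\theta-\eta$, and $X_\theta\hookrightarrow X_\eta$ for the sup part. Your closing worry about pointwise validity (and the incomplete sketch for checking it directly) is unnecessary, since Theorem~\ref{thm:trace-input} already asserts \eqref{eq:trace-uniform} for every $t\in[0,T]$ on the $E_1$-continuous representative, and the paper relies on exactly that.
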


\begin{proof}
The uniform $X_\theta$ bound follows from \eqref{eq:trace-uniform}.  Let $s,t\in[0,T]$ and set $v=u(t)-u(s)$.  Then
\[
\|v\|_{E_1}\le C|t-s|^\theta\|u\|_{\mathbb W_M^{p,\lambda}}
\]
by \eqref{eq:e1-holder}, while
\[
\|v\|_{X_\theta}
\le 2\sup_{r\in[0,T]}\|u(r)\|_{X_\theta}
\le C\|u\|_{\mathbb W_M^{p,\lambda}}.
\]
Apply \eqref{eq:reiteration-ineq} with $\sigma=\theta$.  We obtain
\[
\|u(t)-u(s)\|_{X_\eta}
\le C|t-s|^{\theta(1-\eta/\theta)}
\|u\|_{\mathbb W_M^{p,\lambda}}
=C|t-s|^{\theta-\eta}
\|u\|_{\mathbb W_M^{p,\lambda}}.
\]
The uniform $X_\eta$ bound follows from the continuous embedding $X_\theta\hookrightarrow X_\eta$.
\end{proof}

\begin{corollary}\label{cor:finite-q-mixed}
Let $0\le\eta<\theta$, $1\le q\le\infty$, and $0\le\beta<\theta-\eta$.  Then
\[
\mathbb W_M^{p,\lambda}(I;E_0,E_1)
\hookrightarrow
C^{0,\beta}([0,T];X_{\eta,q}).
\]
\end{corollary}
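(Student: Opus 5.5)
The plan is to trade a little of the interpolation parameter for the change of the second index $q$, and then apply Theorem~\ref{thm:mixed-embedding} at a slightly larger $\eta$. Given $0\le\eta<\theta$ and $0\le\beta<\theta-\eta$, choose $\eta'$ with
\[
\eta<\eta'<\theta-\beta .
\]
Such an $\eta'$ exists because $\eta<\theta-\beta$. Then $\theta-\eta'>\beta$, so Theorem~\ref{thm:mixed-embedding} gives
\[
\|u\|_{C^{0,\theta-\eta'}([0,T];X_{\eta'})}\le C\|u\|_{\mathbb W_M^{p,\lambda}}.
\]

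The second step is the standard fine-index embedding of real interpolation. For $0<\eta<\eta'<1$ and any $1\le q\le\infty$,
\[
(E_1,E_0)_{\eta',\infty}\hookrightarrow(E_1,E_0)_{\eta,q}.
\]
To prove it, split the $K$-functional integral at $t=1$. For $t\ge1$, use $K(t,x)\le\|x\|_{E_1}\lesssim\|x\|_{X_{\eta'}}$, and $t^{-\eta}$ is $q$-integrable against $dt/t$ on $(1,\infty)$ since $\eta>0$. For $t\le1$, use $K(t,x)\le t^{\eta'}\|x\|_{X_{\eta'}}$, and $t^{\eta'-\eta}$ is $q$-integrable on $(0,1)$. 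Together these give $\|x\|_{X_{\eta,q}}\le C\|x\|_{X_{\eta'}}$. This argument is the only place where care is needed: when $\eta>0$ both tails converge, but the case $\eta=0$ requires a separate argument.

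When $\eta=0$, the convention $X_{0,q}=E_1$ makes the target independent of $q$, and $X_{\eta'}\hookrightarrow E_1$ directly. Alternatively, this case already follows from Theorem~\ref{thm:mixed-embedding} with $\eta=0$, because a $C^{0,\theta}$ bound implies a $C^{0,\beta}$ bound on the bounded interval $[0,T]$.

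Finally, combine the two steps. For $s,t\in[0,T]$,
\[
\|u(t)-u(s)\|_{X_{\eta,q}}\le C\|u(t)-u(s)\|_{X_{\eta'}}\le C|t-s|^{\theta-\eta'}\|u\|_{\mathbb W_M^{p,\lambda}}\le C T^{\theta-\eta'-\beta}|t-s|^{\beta}\|u\|_{\mathbb W_M^{p,\lambda}},
\]
and likewise
\[
\sup_t\|u(t)\|_{X_{\eta,q}}\le C\sup_t\|u(t)\|_{X_{\eta'}}.
\]
Continuity of $u$ into $X_{\eta,q}$ follows from the Hölder estimate itself. This yields the claimed embedding, with a constant depending on $\beta$ through the choice of $\eta'$. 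The strict inequality $\beta<\theta-\eta$ is exactly what leaves room to pick $\eta'$, and that is why the endpoint $\beta=\theta-\eta$ is excluded when $q<\infty$.
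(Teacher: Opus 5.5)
Your argument is correct and is essentially the paper's proof: the paper likewise picks an intermediate parameter $\sigma$ (your $\eta'$) with $\eta<\sigma<\theta$ and $\beta<\theta-\sigma$, applies Theorem~\ref{thm:mixed-embedding} at that level, and then uses the monotonicity embedding $X_{\sigma,\infty}\hookrightarrow X_{\eta,q}$. The only difference is that the paper cites this embedding as standard, whereas you verify it by splitting the $K$-functional at $t=1$ (correctly using $E_0\hookrightarrow E_1$ to get $\|x\|_{E_1}\lesssim\|x\|_{X_{\eta'}}$) and treat $\eta=0$ separately via the convention $X_{0,q}=E_1$.
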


\begin{proof}
Choose $\sigma$ so that
\[
\eta<\sigma<\theta,
\qquad
\beta<\theta-\sigma.
\]
Theorem~\ref{thm:mixed-embedding} gives boundedness in
$C^{0,\theta-\sigma}([0,T];X_{\sigma,\infty})$.  The standard monotonicity theorem for real interpolation gives
\[
X_{\sigma,\infty}\hookrightarrow X_{\eta,q}
\]
whenever $\eta<\sigma$.  The conclusion follows.
\end{proof}

\begin{proposition}\label{prop:morrey-exponent-embedding}
Let $1\le q\le p<\infty$, $0\le\lambda,\mu<1$, and assume
\[
\frac{1-\mu}{q}\ge \frac{1-\lambda}{p}.
\]
Then, on a finite interval,
\[
\mathcal M^{p,\lambda}(I;X)
\hookrightarrow
\mathcal M^{q,\mu}(I;X)
\]
for every Banach space $X$.
\end{proposition}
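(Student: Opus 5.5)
The plan is to reduce the claim to Hölder's inequality on each interval $J\subset I$, and then bound $|J|$ in terms of $T$. Fix $f\in\mathcal M^{p,\lambda}(I;X)$ and a nonempty interval $J\subset I$. Since $q\le p$, Hölder's inequality with exponents $p/q$ and its conjugate gives
\[
\left(\int_J\|f(t)\|_X^q\,dt\right)^{1/q}
\le |J|^{1/q-1/p}\left(\int_J\|f(t)\|_X^p\,dt\right)^{1/p}
\le |J|^{1/q-1/p+\lambda/p}\,\|f\|_{\mathcal M^{p,\lambda}(I;X)}.
\]
Multiplying by $|J|^{-\mu/q}$ shows that the $\mathcal M^{q,\mu}$ quotient over $J$ is at most
\[
|J|^{\frac{1-\mu}{q}-\frac{1-\lambda}{p}}\,\|f\|_{\mathcal M^{p,\lambda}(I;X)}.
\]

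The hypothesis $\frac{1-\mu}{q}\ge\frac{1-\lambda}{p}$ makes the exponent $\gamma:=\frac{1-\mu}{q}-\frac{1-\lambda}{p}$ nonnegative. Because $|J|\le T$ on a finite interval, we get $|J|^\gamma\le \max(1,T)^\gamma$; in fact $|J|^\gamma\le T^\gamma$ when $\gamma\ge0$. Taking the supremum over $J$ then yields
\[
\|f\|_{\mathcal M^{q,\mu}(I;X)}\le T^{\gamma}\,\|f\|_{\mathcal M^{p,\lambda}(I;X)},
\]
and this is the asserted continuous embedding. Measurability of $f$ is unchanged, so no separate argument is needed for that.

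There is no real obstacle here. The only points to handle with care are the degenerate case $q=p$, where Hölder is trivial and the hypothesis reduces to $\mu\le\lambda$, and the observation that the finite length of $I$ is used exactly once, to bound $|J|^\gamma$. This is also why the statement is restricted to finite intervals: on $\R$ one would need $\gamma=0$. Note as well that $q\ge1$ is enough for Hölder, so the argument does not require $q>1$.
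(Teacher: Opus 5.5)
Your proposal is correct and follows the paper's proof essentially step for step. Both apply H\"older's inequality on each interval $J\subset I$ to obtain the factor $|J|^{\frac{1-\mu}{q}-\frac{1-\lambda}{p}}$, and then use the nonnegativity of this exponent together with $|J|\le T$ to bound it by the corresponding power of $T$.
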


\begin{proof}
For every interval $J\subset I$, H\"older's inequality gives
\[
\|f\|_{L^q(J;X)}
\le |J|^{1/q-1/p}\|f\|_{L^p(J;X)}.
\]
Hence
\[
|J|^{-\mu/q}\|f\|_{L^q(J;X)}
\le
|J|^{(1-\mu)/q-(1-\lambda)/p}
\|f\|_{\mathcal M^{p,\lambda}(I;X)}.
\]
The exponent is nonnegative, so the factor is bounded by the corresponding power of $T$.
\end{proof}

\section{Compactness in the same Morrey norm}\label{sec:same-morrey}

The following result is the Morrey-time counterpart of the classical Aubin--Lions lemma.  Its point is that the target is not merely $L^p(I;E)$: compactness holds in the same Morrey norm.

We first isolate a simple upgrading lemma.

\begin{lemma}\label{lem:lp-holder-to-uniform}
Let $X$ be a Banach space, $1\le p<\infty$, and $0<\alpha\le1$.  Suppose $(v_n)$ is Cauchy in $L^p(I;X)$ and
\[
\sup_n[v_n]_{C^{0,\alpha}([0,T];X)}<\infty.
\]
Then $(v_n)$ is Cauchy in $C([0,T];X)$.
\end{lemma}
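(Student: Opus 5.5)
The plan is to show directly that $\sup_{t\in[0,T]}\|v_n(t)-v_m(t)\|_X\to0$, by comparing the pointwise difference with an average over a short interval around $t$. Set $w=v_n-v_m$ and $L:=\sup_n[v_n]_{C^{0,\alpha}}$, so that $[w]_{C^{0,\alpha}}\le 2L$ for all $n,m$. Each $v_n$ is Hölder continuous, so we work with its continuous representative and point evaluation makes sense.

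Fix $t\in[0,T]$ and $0<h\le T$. Choose an interval $J_t\subset[0,T]$ of length $h$ containing $t$; for instance $J_t=[t,t+h]$ if $t\le T-h$, and $[t-h,t]$ otherwise. For every $s\in J_t$,
\[
\|w(t)\|_X\le\|w(s)\|_X+\|w(t)-w(s)\|_X\le\|w(s)\|_X+2Lh^\alpha .
\]
Average this over $s\in J_t$ and apply Hölder's inequality. This gives
\[
\|w(t)\|_X\le h^{-1}\int_{J_t}\|w(s)\|_X\,ds+2Lh^\alpha
\le h^{-1/p}\|w\|_{L^p(I;X)}+2Lh^\alpha .
\]
The right-hand side does not depend on $t$, so the same bound holds for $\sup_t\|w(t)\|_X$.

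Now fix $\varepsilon>0$ and choose $h$ so small that $2Lh^\alpha<\varepsilon/2$. With $h$ fixed, the Cauchy property in $L^p(I;X)$ gives an $N$ such that $h^{-1/p}\|v_n-v_m\|_{L^p(I;X)}<\varepsilon/2$ for all $n,m\ge N$. Then $\|v_n-v_m\|_{C([0,T];X)}<\varepsilon$ for $n,m\ge N$, which proves the lemma.

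There is no real obstacle here. The one point needing care is choosing $J_t$ inside $[0,T]$ near the endpoints, which the one-sided choice above handles.
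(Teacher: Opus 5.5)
Your argument is correct and uses the same mechanism as the paper's proof: H\"older continuity spreads a pointwise value of $w=v_n-v_m$ over an interval of fixed length, where the $L^p$ norm detects it. The only difference is that you state it as the direct bound $\sup_t\|w(t)\|_X\le h^{-1/p}\|w\|_{L^p(I;X)}+2Lh^\alpha$, while the paper argues by contradiction from $\|w(t_0)\|_X\ge\varepsilon$. One small slip: your choice $J_t=[t-h,t]$ for $t>T-h$ leaves $[0,T]$ when $h>T/2$ (e.g.\ $h=T$, $0<t<T$), so either restrict to $h\le T/2$, which costs nothing since $h$ is taken small, or use $J_t=[a,a+h]$ with $a=\min\{t,T-h\}$.
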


\begin{proof}
It is enough to apply the argument to differences $w=v_n-v_m$.  Let $H$ be a common H\"older bound for all such differences.  If $\|w(t_0)\|_X\ge\varepsilon$, then
\[
\|w(t)\|_X\ge\frac\varepsilon2
\]
whenever $|t-t_0|\le (\varepsilon/(2H))^{1/\alpha}$.  Intersecting this interval with $[0,T]$ leaves a set of measure bounded from below by a positive number depending only on $\varepsilon,H,T$.  Therefore $\|w\|_{L^p(I;X)}$ is bounded from below by a positive constant.  This contradicts the $L^p$-Cauchy property.  Hence the sequence is uniformly Cauchy.
\end{proof}

\begin{theorem}\label{thm:same-morrey-aubin-lions}
Let
\[
E_0\hookc E\hookrightarrow E_1
\]
be Banach spaces.  Let $1<p<\infty$ and $0<\lambda<1$.  Then the canonical embedding
\[
\mathbb W_M^{p,\lambda}(I;E_0,E_1)
\hookrightarrow
\mathcal M^{p,\lambda}(I;E)
\]
is compact.
\end{theorem}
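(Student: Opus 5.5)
Take a bounded sequence $(u_n)$ in $\mathbb W_M^{p,\lambda}(I;E_0,E_1)$ with $\|u_n\|_{\mathbb W_M^{p,\lambda}}\le 1$. The plan has three steps: extract an $L^p(I;E)$-Cauchy subsequence by the classical Aubin--Lions lemma, make it uniformly Cauchy by exploiting uniform H\"older control, and then recover the Morrey norm by splitting intervals into short and long ones.

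\textbf{Step 1: $L^p$ compactness.} By \eqref{eq:morrey-to-lp}, $(u_n)$ is bounded in $L^p(I;E_0)$ and $(u_n')$ is bounded in $L^p(I;E_1)$. Since $1<p<\infty$ and $E_0\hookc E\hookrightarrow E_1$, the Aubin--Lions--Simon lemma \cite{Aubin1963,Simon1987} gives a subsequence, not relabeled, that is Cauchy in $L^p(I;E)$.

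\textbf{Step 2: uniform convergence in $E$.} Combining Ehrling's lemma with the trace bound \eqref{eq:trace-uniform} is awkward, because $X_\theta$ need not embed compactly into $E$. I would instead use Theorem~\ref{thm:mixed-embedding} with a small $\eta>0$. It gives a uniform bound
\[
[u_n]_{C^{0,\theta-\eta}([0,T];X_\eta)}\le C .
\]
By the same token, $u_n$ is H\"older in $E_1$ by \eqref{eq:e1-holder}. For Lemma~\ref{lem:lp-holder-to-uniform}, however, we need H\"older control in $E$ itself. This is automatic if $X_\eta\hookrightarrow E$, but in general it fails.

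So I would run a variant of the lemma using Ehrling's lemma $\|x\|_E\le \delta\|x\|_{E_0}+C_\delta\|x\|_{E_1}$. Then $(u_n)$ is Cauchy in $L^p(I;E_1)$ and uniformly $\theta$-H\"older in $E_1$, so Lemma~\ref{lem:lp-holder-to-uniform} with $X=E_1$ shows that it is Cauchy in $C([0,T];E_1)$.

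\textbf{Step 3: Morrey upgrade.} Set $w=u_n-u_m$ and fix an interval $J\subset I$.

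For long intervals, $|J|\ge r$, we have
\[
|J|^{-\lambda/p}\|w\|_{L^p(J;E)}\le r^{-\lambda/p}\|w\|_{L^p(I;E)}\to 0 .
\]

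For short intervals, $|J|<r$, we apply Ehrling pointwise and integrate over $J$:
\[
|J|^{-\lambda/p}\|w\|_{L^p(J;E)}\le \delta\,\|w\|_{\mathcal M^{p,\lambda}(I;E_0)}+C_\delta\,|J|^{(1-\lambda)/p}\sup_t\|w(t)\|_{E_1}.
\]
The first term is at most $2\delta$. The second is at most $C_\delta T^{(1-\lambda)/p}\|w\|_{C([0,T];E_1)}\to0$ by Step 2.

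This estimate holds uniformly in $J$, so the split is not really needed: Ehrling together with uniform $E_1$-convergence already controls every interval. Choosing $\delta$ first and then $n,m$ large, $(u_n)$ is Cauchy in $\mathcal M^{p,\lambda}(I;E)$, which proves compactness.

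\textbf{Main obstacle.} The only delicate point is the uniform $E_1$-Cauchy property. It relies on Lemma~\ref{lem:lp-holder-to-uniform} applied with $X=E_1$, using the $L^p(I;E_1)$-Cauchy property inherited from Step 1 via $E\hookrightarrow E_1$. The boundedness assumption $\lambda<1$ enters only through $|J|^{(1-\lambda)/p}\le T^{(1-\lambda)/p}$.
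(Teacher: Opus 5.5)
Your proof is correct and follows essentially the paper's argument: Aubin--Lions--Simon gives an $L^p(I;E)$-Cauchy subsequence, Lemma~\ref{lem:lp-holder-to-uniform} with $X=E_1$ and \eqref{eq:e1-holder} upgrade this to a uniformly $E_1$-Cauchy subsequence, and Ehrling's inequality applied pointwise and then passed to the Morrey norm finishes the proof. The detour through Theorem~\ref{thm:mixed-embedding} and the long/short interval split are not needed, as you yourself observe, so they can simply be deleted.
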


\begin{proof}
Let $(u_n)$ be bounded in $\mathbb W_M^{p,\lambda}(I;E_0,E_1)$.  By \eqref{eq:morrey-to-lp}, the sequence is bounded in $L^p(I;E_0)$ and $(u_n')$ is bounded in $L^p(I;E_1)$.  The classical Aubin--Lions--Simon theorem therefore yields a subsequence, not relabeled, which is Cauchy in $L^p(I;E)$.

Since $E\hookrightarrow E_1$, the same subsequence is Cauchy in $L^p(I;E_1)$.  On the other hand, \eqref{eq:e1-holder} gives a uniform $C^{0,\theta}([0,T];E_1)$ bound.  Lemma~\ref{lem:lp-holder-to-uniform} implies
\[
\|u_n-u_m\|_{C([0,T];E_1)}\longrightarrow0.
\]
Consequently,
\begin{equation}\label{eq:morrey-e1-conv}
\|u_n-u_m\|_{\mathcal M^{p,\lambda}(I;E_1)}
\le
T^{(1-\lambda)/p}
\|u_n-u_m\|_{C([0,T];E_1)}
\longrightarrow0.
\end{equation}

By Ehrling's lemma, for every $\varepsilon>0$ there exists $C_\varepsilon$ such that
\[
\|x\|_E
\le \varepsilon\|x\|_{E_0}+C_\varepsilon\|x\|_{E_1},
\qquad x\in E_0.
\]
Apply this inequality almost everywhere to $u_n-u_m$, take $L^p$ norms on an arbitrary interval $J\subset I$, multiply by $|J|^{-\lambda/p}$, and then take the supremum over $J$.  We obtain
\[
\|u_n-u_m\|_{\mathcal M^{p,\lambda}(I;E)}
\le
\varepsilon
\|u_n-u_m\|_{\mathcal M^{p,\lambda}(I;E_0)}
+C_\varepsilon
\|u_n-u_m\|_{\mathcal M^{p,\lambda}(I;E_1)}.
\]
The first factor is uniformly bounded, while the second tends to zero by \eqref{eq:morrey-e1-conv}.  First choose $\varepsilon$ small and then $n,m$ large.  Thus $(u_n)$ is Cauchy in $\mathcal M^{p,\lambda}(I;E)$, which proves compactness.  Notice that no reflexivity assumption on any of the three Banach spaces has been used: the only compactness input is the classical Aubin--Lions--Simon theorem for $E_0\hookc E\hookrightarrow E_1$, while $p>1$ is used in the Morrey H\"older estimate.
\end{proof}

\begin{corollary}\label{cor:weaker-morrey-compact}
Under the assumptions of Theorem~\ref{thm:same-morrey-aubin-lions}, let $1\le q\le p$ and $0\le\mu<1$ satisfy
\[
\frac{1-\mu}{q}\ge \frac{1-\lambda}{p}.
\]
Then
\[
\mathbb W_M^{p,\lambda}(I;E_0,E_1)
\hookc
\mathcal M^{q,\mu}(I;E).
\]
\end{corollary}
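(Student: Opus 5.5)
The plan is to write the embedding as a composition: a compact map followed by a bounded one. Theorem~\ref{thm:same-morrey-aubin-lions} shows that
\[
\mathbb W_M^{p,\lambda}(I;E_0,E_1)\hookc\mathcal M^{p,\lambda}(I;E).
\]
Proposition~\ref{prop:morrey-exponent-embedding} is stated for an arbitrary Banach space $X$, so we may take $X=E$. Under the hypotheses $1\le q\le p$, $0\le\mu<1$ and $(1-\mu)/q\ge(1-\lambda)/p$, it gives the continuous embedding
\[
\mathcal M^{p,\lambda}(I;E)\hookrightarrow\mathcal M^{q,\mu}(I;E).
\]
A compact operator followed by a bounded operator is compact. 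Hence the composition $\mathbb W_M^{p,\lambda}(I;E_0,E_1)\hookrightarrow\mathcal M^{q,\mu}(I;E)$ is compact.

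In sequential form the argument runs as follows. Let $(u_n)$ be bounded in $\mathbb W_M^{p,\lambda}$. By Theorem~\ref{thm:same-morrey-aubin-lions} we extract a subsequence that is Cauchy in $\mathcal M^{p,\lambda}(I;E)$. We then apply the estimate from the proof of Proposition~\ref{prop:morrey-exponent-embedding} to the differences $u_n-u_m$. For every interval $J\subset I$ this gives
\[
|J|^{-\mu/q}\|u_n-u_m\|_{L^q(J;E)}\le T^{(1-\mu)/q-(1-\lambda)/p}\|u_n-u_m\|_{\mathcal M^{p,\lambda}(I;E)}.
\]
The right-hand side is uniform in $J$, so the subsequence is also Cauchy in $\mathcal M^{q,\mu}(I;E)$.

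The only point needing a check is that Proposition~\ref{prop:morrey-exponent-embedding} allows $q=1$ and $\mu=0$, even though Theorem~\ref{thm:same-morrey-aubin-lions} assumes $p>1$. The proposition's proof uses only H\"older's inequality on each interval $J$, so this case is covered; $\mathcal M^{1,\mu}$ is simply defined by the same supremum formula. There is no substantive obstacle, since the corollary is a direct composition of two results already established.
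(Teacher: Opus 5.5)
Your proposal is correct and is the paper's argument: the paper likewise obtains the corollary by composing the compact embedding of Theorem~\ref{thm:same-morrey-aubin-lions} with the continuous embedding of Proposition~\ref{prop:morrey-exponent-embedding} for $X=E$. Your sequential version, with the constant $T^{(1-\mu)/q-(1-\lambda)/p}$ and the remark on the case $q=1$, just makes that composition explicit.
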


\begin{proof}
Combine Theorem~\ref{thm:same-morrey-aubin-lions} with Proposition~\ref{prop:morrey-exponent-embedding}.
\end{proof}

\begin{remark}\label{rem:stronger-than-classical}
Theorem~\ref{thm:same-morrey-aubin-lions} is strictly a statement about the Morrey topology.  Classical Aubin--Lions supplies only the intermediate $L^p(I;E)$ compactness used in the proof.  The Morrey derivative estimate gives uniform H\"older control in the weak space $E_1$; this is what allows the $L^p$ convergence to be upgraded and then transferred back to the full Morrey norm through Ehrling's lemma.
\end{remark}

\section{Exact compactness in \texorpdfstring{$C([0,T];E)$}{C([0,T];E)}}\label{sec:exact-C}

The preceding theorem uses the classical geometry $E_0\hookc E\hookrightarrow E_1$.  Compactness in $C([0,T];E)$ has a different and sharper criterion.

\begin{theorem}\label{thm:exact-C-criterion}
Let $E$ be a Banach space continuously embedded in $E_1$.  Then
\[
\mathbb W_M^{p,\lambda}(I;E_0,E_1)
\hookc C([0,T];E)
\]
if and only if
\[
X_\theta=(E_1,E_0)_{\theta,\infty}
\hookc E.
\]
\end{theorem}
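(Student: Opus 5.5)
Both directions rest on Theorem~\ref{thm:trace-input}: sufficiency uses the uniform trace bound \eqref{eq:trace-uniform} and the $E_1$-H\"older estimate \eqref{eq:e1-holder}, necessity uses the bounded extension $R$.

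\emph{Sufficiency.} Assume $X_\theta\hookc E$, and let $(u_n)$ be bounded in $\mathbb W_M^{p,\lambda}$. We verify the two hypotheses of the vector-valued Arzel\`a--Ascoli theorem in $C([0,T];E)$.

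First, pointwise relative compactness. By \eqref{eq:trace-uniform}, the set $\{u_n(t):n\in\mathbb N,\ t\in[0,T]\}$ is bounded in $X_\theta$. Hence it is relatively compact in $E$.

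Second, equicontinuity in $E$. Since $X_\theta\hookc E\hookrightarrow E_1$, Ehrling's lemma gives, for every $\varepsilon>0$, a constant $C_\varepsilon$ with
\[
\|x\|_E\le\varepsilon\|x\|_{X_\theta}+C_\varepsilon\|x\|_{E_1},\qquad x\in X_\theta .
\]
We apply this to $x=u_n(t)-u_n(s)$. The first term is at most $2\varepsilon\sup_r\|u_n(r)\|_{X_\theta}\le C\varepsilon$. The second is at most $C_\varepsilon C|t-s|^\theta$ by \eqref{eq:e1-holder}. Choosing $\varepsilon$ first and then $|t-s|$ small gives uniform equicontinuity in $E$.

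Continuity of each $u_n$ as an $E$-valued map follows from the same estimate. We must also check that $u_n(t)\in X_\theta\subset E$ for \emph{every} $t$, which holds for the continuous representative; this is exactly what \eqref{eq:trace-uniform} asserts. Arzel\`a--Ascoli then gives a subsequence convergent in $C([0,T];E)$.

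One subtlety is that Ehrling's lemma needs $X_\theta\hookc E$ with $E\hookrightarrow E_1$ injective. This is assumed.

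\emph{Necessity.} Assume the embedding into $C([0,T];E)$ is compact, and let $(x_n)$ be bounded in $X_\theta$. Set $u_n:=Rx_n$. By Theorem~\ref{thm:trace-input}, $(u_n)$ is bounded in $\mathbb W_M^{p,\lambda}$, so some subsequence $(u_{n_k})$ converges in $C([0,T];E)$. Evaluation at $t=0$ is continuous from $C([0,T];E)$ to $E$, and $u_{n_k}(0)=x_{n_k}$. Hence $(x_{n_k})$ converges in $E$.

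This shows that bounded sets of $X_\theta$ are relatively compact in $E$. In particular $X_\theta\subset E$, and the inclusion is bounded: a compact map is bounded, or alternatively one argues by the closed graph theorem using $E\hookrightarrow E_1$. No linearity of $R$ is needed, since only boundedness on sequences is used.

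\emph{Main obstacle.} The delicate point is in necessity: making sure $X_\theta\subset E$ as sets, so that the embedding is even defined. This follows because every $x\in X_\theta$ is the trace $u(0)$ of some $u=Rx\in\mathbb W_M^{p,\lambda}$, and $u\in C([0,T];E)$ by hypothesis. It is consistent with the $E_1$-value of $u(0)$ because $E\hookrightarrow E_1$ is injective.

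A secondary point is in sufficiency: confirming that the continuous representative takes values in $X_\theta$ at every $t$, not merely almost everywhere. I would take this directly from \eqref{eq:trace-uniform} as stated.
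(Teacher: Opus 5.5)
Your proposal is correct and follows the paper's proof essentially step for step. Sufficiency uses the uniform $X_\theta$ trace bound, Ehrling's lemma for $X_\theta\hookc E\hookrightarrow E_1$ combined with the $E_1$-H\"older estimate, and Arzel\`a--Ascoli; necessity uses the bounded (not necessarily linear) extension $R$ and evaluation at $t=0$, with linearity of the inclusion $X_\theta\to E$ turning sequential relative compactness of the unit ball into boundedness and compactness.
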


\begin{proof}
Assume first that $X_\theta\hookc E\hookrightarrow E_1$.  Let $(u_n)$ be bounded in the Morrey evolution space.  By Theorem~\ref{thm:trace-input}, the set
\[
\{u_n(t):n\in\mathbb N\}
\]
is bounded in $X_\theta$ for each fixed $t$, hence relatively compact in $E$.

Ehrling's lemma for the compact embedding $X_\theta\hookc E\hookrightarrow E_1$ gives, for every $\varepsilon>0$,
\[
\|x\|_E
\le \varepsilon\|x\|_{X_\theta}+C_\varepsilon\|x\|_{E_1},
\qquad x\in X_\theta.
\]
Apply this to $u_n(t)-u_n(s)$.  The $X_\theta$ term is uniformly bounded by \eqref{eq:trace-uniform}, while the $E_1$ term tends to zero uniformly in $n$ as $t\to s$ by \eqref{eq:e1-holder}.  Thus $(u_n)$ is equicontinuous in $E$.  Banach-valued Arzel\`a--Ascoli yields relative compactness in $C([0,T];E)$.

Conversely, assume that the canonical embedding
\[
J:\mathbb W_M^{p,\lambda}(I;E_0,E_1)\to C([0,T];E)
\]
is compact.  Let $(x_n)$ be bounded in $X_\theta$.  By Theorem~\ref{thm:trace-input}, choose extensions $u_n=Rx_n$ with
\[
 u_n(0)=x_n,
 \qquad
 \sup_n\|u_n\|_{\mathbb W_M^{p,\lambda}}<\infty.
\]
Compactness of $J$ gives a subsequence converging in $C([0,T];E)$.  Evaluating at $t=0$ shows that the corresponding subsequence of $(x_n)$ converges in $E$.  In particular every element of $X_\theta$ belongs to $E$, and every bounded sequence in the $X_\theta$-unit ball has an $E$-convergent subsequence.  Since the natural inclusion $X_\theta\to E$ is linear, sequential relative compactness of its unit ball implies boundedness and compactness of that inclusion.  Hence $X_\theta\hookc E$.
\end{proof}

\begin{corollary}\label{cor:defect-lifting}
If $X_\theta\hookrightarrow E$ is not compact, then there exists a bounded sequence $(u_n)$ in $\mathbb W_M^{p,\lambda}(I;E_0,E_1)$ with no convergent subsequence in $C([0,T];E)$.
\end{corollary}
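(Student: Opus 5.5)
The corollary is the contrapositive of the necessity half of Theorem~\ref{thm:exact-C-criterion}, made explicit by building the offending sequence. The plan has three steps: choose a bad sequence of trace data, lift it with the extension $R$ of Theorem~\ref{thm:trace-input}, and read the defect off at $t=0$.

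First, I would split according to why $X_\theta\hookrightarrow E$ fails to be compact. There are two cases.
\begin{itemize}
\item If $X_\theta\not\subset E$, or the inclusion is unbounded, the final sentence of the proof of Theorem~\ref{thm:exact-C-criterion} shows that compactness of $J$ is impossible. Then $J$ is not compact, so by definition some bounded sequence in $\mathbb W_M^{p,\lambda}$ has no subsequence convergent in $C([0,T];E)$.
\item In the main case the inclusion is bounded but not compact. I would then pick $(x_n)$ in the unit ball of $X_\theta$ with no $E$-Cauchy subsequence. A standard diagonal argument lets me assume $\|x_n-x_m\|_E\ge\delta>0$ for all $n\ne m$.
\end{itemize}

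Second, in the main case I set $u_n=Rx_n$. By Theorem~\ref{thm:trace-input},
\[
\|u_n\|_{\mathbb W_M^{p,\lambda}}\le C,\qquad u_n(0)=x_n .
\]
Suppose a subsequence $(u_{n_k})$ converged in $C([0,T];E)$. Evaluation at $t=0$ is $1$-Lipschitz from $C([0,T];E)$ to $E$, since
\[
\|u_{n_k}(0)-u_{n_l}(0)\|_E\le\|u_{n_k}-u_{n_l}\|_{C([0,T];E)} .
\]
So $(x_{n_k})$ would be Cauchy in $E$, contradicting the $\delta$-separation.

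The main obstacle is a consistency check in this evaluation step. The value $u_n(0)$ is defined through the $E_1$-continuous representative, while the limit lives in $C([0,T];E)$. Because $E\hookrightarrow E_1$, the $E$-continuous representative coincides with the $E_1$-continuous one, so the trace at $0$ is unambiguous. Nonlinearity of $R$ is harmless, since only the norm bound on each $Rx_n$ is used.
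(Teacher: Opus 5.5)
Your proposal is correct and is essentially the paper's proof: take a bounded sequence in $X_\theta$ with no $E$-convergent subsequence, lift it by $R$, and note that convergence in $C([0,T];E)$ would force the traces $u_n(0)=x_n$ to converge in $E$. Your case split and the $\delta$-separation are harmless but not needed, since the paper simply takes any bounded sequence in $X_\theta$ with no $E$-convergent subsequence; your remark that the $E$-continuous and $E_1$-continuous representatives coincide makes explicit a point the paper leaves implicit.
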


\begin{proof}
Choose a bounded sequence $(x_n)$ in $X_\theta$ with no convergent subsequence in $E$ and set $u_n=Rx_n$, where $R$ is a bounded extension assignment from Theorem~\ref{thm:trace-input}.  Then $(u_n)$ is bounded in the evolution space and $u_n(0)=x_n$.  Convergence in $C([0,T];E)$ would force convergence of the traces in $E$, a contradiction.
\end{proof}

\begin{remark}\label{rem:defect-interpretation}
Corollary~\ref{cor:defect-lifting} is a simple defect-of-compactness principle: every noncompact bounded trace sequence can be lifted to a noncompact bounded evolution sequence.  In critical Sobolev settings the trace defect may be produced by translation, dilation, or concentration.  Section~\ref{sec:profiles} gives a full translation profile theorem in the Hilbert-valued subcritical Morrey regime; the genuinely critical dilation problem requires a larger anisotropic dislocation group.
\end{remark}

\section{Compact lower-order embeddings}\label{sec:lower-order}

We now assume that the two endpoint spaces themselves form a compact couple.

\begin{lemma}\label{lem:compact-interpolation-gap}
Assume $E_0\hookc E_1$.  Let
\[
0\le\eta<\sigma<1,
\qquad 1\le q,r\le\infty.
\]
Then the natural embedding
\[
(E_1,E_0)_{\sigma,r}
\hookc
(E_1,E_0)_{\eta,q}
\]
is compact.
\end{lemma}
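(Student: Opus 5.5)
We will reduce the statement to the case $\eta=0$ together with an interpolation inequality, in the spirit of the Ehrling arguments used in Theorems~\ref{thm:same-morrey-aubin-lions} and~\ref{thm:exact-C-criterion}. Let $(x_n)$ be bounded in $(E_1,E_0)_{\sigma,r}$, say by $M$. We may treat $(E_1,E_0)_{\sigma,r}$ as continuously embedded in $X_\sigma=(E_1,E_0)_{\sigma,\infty}$. We also choose an auxiliary exponent $\sigma'$ with $\eta<\sigma'<\sigma$. Then the monotonicity theorem (as in Corollary~\ref{cor:finite-q-mixed}) gives $X_{\sigma'}\hookrightarrow (E_1,E_0)_{\eta,q}$ for every $1\le q\le\infty$. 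If $\eta=0$ and $q$ is arbitrary, the target is $E_1$ by the paper's convention, and we take $\sigma'$ with $0<\sigma'<\sigma$ and use $X_{\sigma'}\hookrightarrow E_1$ directly. It therefore suffices to show that $(x_n)$ has a subsequence that is Cauchy in $X_{\sigma'}$.

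By \eqref{eq:reiteration-ineq} applied with the pair $\sigma'<\sigma$,
\[
\|x_n-x_m\|_{X_{\sigma'}}\le C\|x_n-x_m\|_{E_1}^{1-\sigma'/\sigma}(2M)^{\sigma'/\sigma}.
\]
The whole matter is thus reduced to the claim that $X_\sigma\hookc E_1$, i.e.\ that bounded sets of $X_\sigma$ have $E_1$-Cauchy subsequences.

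This claim is the main step. Given $\varepsilon>0$, the $K$-functional bound $K(t,x_n)\le Mt^{\sigma}$ yields a decomposition $x_n=a_n+b_n$ with
\[
\|a_n\|_{E_1}+t\|b_n\|_{E_0}\le 2Mt^\sigma .
\]
Choose $t$ small, so that $\|a_n\|_{E_1}\le 2Mt^\sigma<\varepsilon$. For this fixed $t$, the sequence $(b_n)$ is bounded in $E_0$ by $2Mt^{\sigma-1}$. Since $E_0\hookc E_1$, $(b_n)$ is relatively compact in $E_1$. Hence $\{x_n\}$ lies within $\varepsilon$ of a relatively compact subset of $E_1$, for every $\varepsilon>0$, so it is totally bounded in $E_1$. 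A diagonal extraction over $\varepsilon=1/k$ then produces an $E_1$-Cauchy subsequence.

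Inserting this subsequence into the displayed interpolation inequality makes it Cauchy in $X_{\sigma'}$. It is therefore Cauchy in $(E_1,E_0)_{\eta,q}$, which is complete, and so it converges there. This proves the compactness of the embedding.

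The only delicate points are bookkeeping. First, one must pass through the $\infty$-spaces and a strictly intermediate $\sigma'$, because reiteration is stated in \eqref{eq:reiteration-ineq} only for $X_{\cdot,\infty}$, and monotonicity only holds across strictly different exponents when $q<r$. Second, the case $\eta=0$ must be handled through the convention $X_{0,q}=E_1$. Neither should cause difficulty.
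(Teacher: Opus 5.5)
Your proof is correct and follows essentially the paper's route. It uses the same $K$-functional splitting $x=a_t+b_t$ to get $(E_1,E_0)_{\sigma,r}\hookc E_1$, and then the interpolation inequality \eqref{eq:reiteration-ineq} together with the strict-index embedding through an intermediate exponent. The one small difference is that you apply \eqref{eq:reiteration-ineq} to differences $x_n-x_m$ (automatically bounded in $X_\sigma$) and then use completeness of $(E_1,E_0)_{\eta,q}$; this neatly bypasses the paper's lower-semicontinuity/Fatou step, which shows that the $E_1$-limit lies in $(E_1,E_0)_{\sigma,r}$.
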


\begin{proof}
We give a direct proof that also covers the fine-index endpoint $q=\infty$.  Put
\[
X_{\sigma,r}:=(E_1,E_0)_{\sigma,r}.
\]
Since $X_{\sigma,r}\hookrightarrow X_{\sigma,\infty}$, a bounded set $B\subset X_{\sigma,r}$ satisfies
\[
K(t,x;E_1,E_0)\le C_B t^\sigma,
\qquad x\in B,
\]
for all $t>0$.  Fix $0<t<1$.  For every $x\in B$ choose
\[
x=a_t(x)+b_t(x),\qquad a_t(x)\in E_1,\quad b_t(x)\in E_0,
\]
with
\[
\|a_t(x)\|_{E_1}+t\|b_t(x)\|_{E_0}\le 2C_Bt^\sigma.
\]
Thus the $a_t$-part is uniformly $O(t^\sigma)$ in $E_1$, while the $b_t$-part is bounded in $E_0$ for fixed $t$.  Since $E_0\hookc E_1$, the set of $b_t$-parts is relatively compact in $E_1$.  First choosing $t$ small and then a finite $E_1$-net for the $b_t$-parts shows that $B$ is totally bounded in $E_1$.  Therefore
\[
X_{\sigma,r}\hookc E_1.
\]

Let $(x_n)$ be bounded in $X_{\sigma,r}$ and, after extraction, suppose $x_n\to x$ in $E_1$.  For every $t>0$, lower semicontinuity of the $K$-functional under $E_1$ convergence gives
\[
K(t,x;E_1,E_0)\le\liminf_{n\to\infty}K(t,x_n;E_1,E_0).
\]
Fatou's lemma for $r<\infty$ and the corresponding supremum estimate for $r=\infty$ show that $x\in X_{\sigma,r}$.  Hence $z_n:=x_n-x$ is bounded in $X_{\sigma,r}$ and therefore in $X_{\sigma,\infty}$.

If $q=\infty$ and $0<\eta<\sigma$, the elementary interpolation inequality yields
\[
\|z_n\|_{(E_1,E_0)_{\eta,\infty}}
\le C
\|z_n\|_{E_1}^{1-\eta/\sigma}
\|z_n\|_{(E_1,E_0)_{\sigma,\infty}}^{\eta/\sigma}
\longrightarrow0.
\]
If $q<\infty$, choose $\eta'$ with $\eta<\eta'<\sigma$.  The preceding estimate gives
\[
z_n\to0\quad\text{in }(E_1,E_0)_{\eta',\infty},
\]
and the strict-smoothness embedding
\[
(E_1,E_0)_{\eta',\infty}\hookrightarrow(E_1,E_0)_{\eta,q}
\]
gives the desired convergence.  When $\eta=0$, the target is $E_1$ by convention, and convergence is already known.  Thus the embedding is compact for every $1\le q,r\le\infty$.
\end{proof}

\begin{theorem}\label{thm:compact-lower-order}
Assume $E_0\hookc E_1$.  Let $0\le\eta<\theta$, $1\le q\le\infty$, and $0\le\beta<\theta-\eta$.  Then
\[
\mathbb W_M^{p,\lambda}(I;E_0,E_1)
\hookc
C^{0,\beta}([0,T];X_{\eta,q}).
\]
\end{theorem}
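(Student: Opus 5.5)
Take a bounded sequence $(u_n)$ in $\mathbb W_M^{p,\lambda}(I;E_0,E_1)$ and extract a subsequence that is Cauchy in $C^{0,\beta}([0,T];X_{\eta,q})$. First fix an intermediate exponent $\sigma$ with
\[
\eta<\sigma<\theta,\qquad \beta<\theta-\sigma .
\]
By Theorem~\ref{thm:mixed-embedding} (applied with $\sigma$ in place of $\eta$), $(u_n)$ is bounded in $C^{0,\theta-\sigma}([0,T];X_{\sigma,\infty})$. Lemma~\ref{lem:compact-interpolation-gap} gives $X_{\sigma,\infty}\hookc X_{\eta,q}$. In particular, for each fixed $t$ the set $\{u_n(t)\}$ is relatively compact in $X_{\eta,q}$. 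Equicontinuity in $X_{\eta,q}$ follows from the uniform $C^{0,\theta-\sigma}$ bound together with $X_{\sigma,\infty}\hookrightarrow X_{\eta,q}$. Banach-valued Arzel\`a--Ascoli then yields a subsequence, not relabeled, that is Cauchy in $C([0,T];X_{\eta,q})$.

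It remains to upgrade uniform convergence to convergence of the H\"older seminorm of order $\beta$. I use the standard interpolation between the sup norm and a higher H\"older seminorm. Put $w=u_n-u_m$ and $\alpha=\theta-\sigma>\beta$. For $s\ne t$ we have
\[
\frac{\|w(t)-w(s)\|_{X_{\eta,q}}}{|t-s|^\beta}
\le \min\Bigl(2\|w\|_{C([0,T];X_{\eta,q})}|t-s|^{-\beta},\;[w]_{C^{0,\alpha}([0,T];X_{\eta,q})}|t-s|^{\alpha-\beta}\Bigr).
\]
Balancing the two terms gives
\[
[w]_{C^{0,\beta}}\le C\|w\|_{C}^{1-\beta/\alpha}[w]_{C^{0,\alpha}}^{\beta/\alpha}.
\]
The seminorm $[w]_{C^{0,\alpha}([0,T];X_{\eta,q})}$ is uniformly bounded, again by Theorem~\ref{thm:mixed-embedding} and the continuous embedding $X_{\sigma,\infty}\hookrightarrow X_{\eta,q}$. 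Meanwhile $\|w\|_C\to0$. Hence $[u_n-u_m]_{C^{0,\beta}}\to0$, and the subsequence is Cauchy in $C^{0,\beta}([0,T];X_{\eta,q})$. This space is complete, so the embedding is compact. When $\beta=0$ the second step is unnecessary.

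The only point needing care is the choice of $\sigma$. It must lie strictly between $\eta$ and $\theta$, so that Lemma~\ref{lem:compact-interpolation-gap} applies (including the fine index $q$). It must also leave a strict H\"older gap $\alpha=\theta-\sigma>\beta$, so that the interpolation of seminorms has a positive power of $\|w\|_C$. This is exactly why the hypotheses require the strict inequality $\beta<\theta-\eta$. The case $\eta=0$ is covered by the convention $X_{0,q}=E_1$, which Lemma~\ref{lem:compact-interpolation-gap} already includes.
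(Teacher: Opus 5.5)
Your proposal is correct and follows the paper's proof essentially step for step. Like the paper, you pick an intermediate $\sigma$ with $\eta<\sigma<\theta$ and $\beta<\theta-\sigma$, use Theorem~\ref{thm:mixed-embedding}, Lemma~\ref{lem:compact-interpolation-gap} and Arzel\`a--Ascoli to get convergence in $C([0,T];X_{\eta,q})$, and then upgrade to $C^{0,\beta}$ by interpolating H\"older seminorms with $\alpha=\theta-\sigma$.
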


\begin{proof}
Choose $\sigma$ with
\[
\eta<\sigma<\theta,
\qquad
\beta<\theta-\sigma.
\]
By Theorem~\ref{thm:mixed-embedding}, bounded subsets of the evolution space are bounded in
\[
C^{0,\theta-\sigma}([0,T];X_{\sigma,\infty}).
\]
Lemma~\ref{lem:compact-interpolation-gap} gives
\[
X_{\sigma,\infty}\hookc X_{\eta,q}.
\]
Therefore the family is pointwise relatively compact and equicontinuous in $X_{\eta,q}$, hence relatively compact in $C([0,T];X_{\eta,q})$ by Arzel\`a--Ascoli.

Finally, if $v_n\to0$ uniformly in $X_{\eta,q}$ and $(v_n)$ is uniformly bounded in $C^{0,\alpha}([0,T];X_{\eta,q})$ for some $\alpha>\beta$, then the elementary H\"older interpolation estimate
\[
[v_n]_{C^{0,\beta}}
\le C
\|v_n\|_{C}^{1-\beta/\alpha}
[v_n]_{C^{0,\alpha}}^{\beta/\alpha}
\]
shows convergence in $C^{0,\beta}$.  Taking $\alpha=\theta-\sigma$ completes the proof.
\end{proof}

\subsection{A sharp Hilbert-triple threshold}

The previous theorem has a particularly transparent consequence for the standard Gelfand triple used in weak formulations of parabolic equations.

\begin{corollary}\label{cor:hilbert-triple}
Let
\[
V\hookc H\hookrightarrow V^*
\]
be a Hilbert triple and assume the standard interpolation identity
\[
H=(V^*,V)_{1/2,2}
\]
with equivalent norms.  Let $1<p<\infty$, $0<\lambda<1$, and
\[
\theta=1-\frac{1-\lambda}{p}.
\]
Then every bounded subset of
\[
\mathbb W_M^{p,\lambda}(I;V,V^*)
\]
is relatively compact in $\mathcal M^{p,\lambda}(I;H)$.  If in addition
\[
\theta>\frac12,
\]
then it is relatively compact in
\[
C^{0,\beta}([0,T];H)
\qquad
\text{for every }0\le\beta<\theta-\frac12.
\]
\end{corollary}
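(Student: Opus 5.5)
The plan is to obtain both assertions by specializing earlier results to the couple $(E_0,E_1)=(V,V^*)$ with intermediate space $E=H$.

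\emph{First assertion.} Theorem~\ref{thm:same-morrey-aubin-lions} applies once we have $V\hookc H\hookrightarrow V^*$. The first inclusion is compact by hypothesis. The second is the standard Gelfand embedding, which is continuous. Hence bounded subsets of $\mathbb W_M^{p,\lambda}(I;V,V^*)$ are relatively compact in $\mathcal M^{p,\lambda}(I;H)$. The interpolation identity is not needed for this part.

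\emph{Second assertion.} Here I would invoke Theorem~\ref{thm:compact-lower-order} with $\eta=\tfrac12$ and $q=2$. That theorem requires $E_0\hookc E_1$, i.e.\ $V\hookc V^*$, which holds because it is the composition of the compact map $V\hookc H$ with the bounded map $H\hookrightarrow V^*$. Its index conditions are $0\le\eta<\theta$, which is exactly the hypothesis $\theta>\tfrac12$, and $0\le\beta<\theta-\eta=\theta-\tfrac12$, which is the stated range of $\beta$. The theorem then gives a compact embedding into $C^{0,\beta}([0,T];X_{1/2,2})$, where $X_{1/2,2}=(V^*,V)_{1/2,2}$. By the assumed identity this space equals $H$ with equivalent norms, so the Hölder seminorm and the sup norm computed in $X_{1/2,2}$ are equivalent to those computed in $H$. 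Relative compactness therefore transfers to $C^{0,\beta}([0,T];H)$.

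\emph{Main point to check.} The only delicate step is identifying the target space: the theorem is stated with fine index $q$, and one needs exactly $q=2$ to match the hypothesis $H=(V^*,V)_{1/2,2}$. Corollary~\ref{cor:finite-q-mixed} and Lemma~\ref{lem:compact-interpolation-gap} already allow arbitrary $q$, so this costs nothing. It is also worth noting why the threshold is sharp: for $\theta\le\tfrac12$ one cannot choose $\eta=\tfrac12<\theta$, so this route fails. Equivalently, by Theorem~\ref{thm:exact-C-criterion} one would need $X_\theta\hookc H$. When $\theta<\tfrac12$, $X_\theta$ is strictly larger than $H$ in typical triples; for example, when $V^*\hookrightarrow H$ fails, $X_\theta\not\hookrightarrow H$. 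This sharpness remark is not required for the stated corollary.
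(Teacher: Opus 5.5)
Your proposal is correct and matches the paper's proof: the Morrey compactness is Theorem~\ref{thm:same-morrey-aubin-lions} with $(E_0,E,E_1)=(V,H,V^*)$, and the H\"older compactness is Theorem~\ref{thm:compact-lower-order} with $E_0=V$, $E_1=V^*$, $\eta=1/2$, $q=2$, using $V\hookc V^*$ and the identity $(V^*,V)_{1/2,2}=H$. Your closing sharpness aside is not needed for the statement; the paper treats sharpness separately in Proposition~\ref{prop:hilbert-sharpness} with an explicit weighted $\ell^2$ example.
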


\begin{proof}
The Morrey compactness follows from Theorem~\ref{thm:same-morrey-aubin-lions}.  For the time-continuous conclusion apply Theorem~\ref{thm:compact-lower-order} with $E_0=V$, $E_1=V^*$, $\eta=1/2$, and $q=2$.
\end{proof}

The condition $\theta>1/2$ is equivalent to
\[
\frac{1-\lambda}{p}<\frac12.
\]
Thus genuine Morrey control can move an energy class above the classical critical trace level.  For instance, when $p=2$, every $\lambda>0$ yields compactness in $C([0,T];H)$, whereas the classical $\lambda=0$ energy class sits exactly at the critical level.

\begin{proposition}\label{prop:hilbert-sharpness}
The threshold $\theta>1/2$ in Corollary~\ref{cor:hilbert-triple} is sharp at the level of abstract Hilbert triples.
\end{proposition}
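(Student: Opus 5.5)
The plan is to prove sharpness by exhibiting, for every pair $(p,\lambda)$ with $\theta\le 1/2$, a concrete Hilbert triple $V\hookc H\hookrightarrow V^*$ satisfying $H=(V^*,V)_{1/2,2}$ for which compactness in $C([0,T];H)$ fails. The failure will then be transferred to the evolution class by the exact criterion, Theorem~\ref{thm:exact-C-criterion}, or directly by Corollary~\ref{cor:defect-lifting}. Since $C^{0,\beta}([0,T];H)\hookrightarrow C([0,T];H)$, this also rules out the Hölder conclusion for every $\beta\ge0$. So everything reduces to showing that $X_\theta=(V^*,V)_{\theta,\infty}$ does not embed compactly into $H$ when $\theta\le1/2$.

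I will use a diagonal model. Let $H=\ell^2(\mathbb N)$ with basis $(e_k)$. Fix weights $\mu_k\ge1$ with $\mu_k\uparrow\infty$, and set
\[
\|x\|_V^2=\sum_k\mu_k^2|x_k|^2,\qquad \|x\|_{V^*}^2=\sum_k\mu_k^{-2}|x_k|^2 .
\]
Then $V\hookc H$, since the inclusion is a diagonal operator with entries $\mu_k^{-1}\to0$. The space $V^*$ is the dual of $V$ with respect to the $H$ pairing. The identity $H=(V^*,V)_{1/2,2}$ holds with equivalent norms by the Stein--Weiss interpolation theorem for weighted $\ell^2$ spaces, since the geometric mean of the weights $\mu_k^{-2}$ and $\mu_k^{2}$ is $1$. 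So the triple satisfies the hypotheses of Corollary~\ref{cor:hilbert-triple}.

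The key computation is the norm of a single basis vector in $X_\theta$. A splitting $e_k=a+b$ can be reduced to the $k$-th coordinate, because projecting onto that coordinate does not increase either weighted norm. Hence
\[
K(t,e_k;V^*,V)=\inf_{s\in\mathbb C}\bigl(|1-s|\mu_k^{-1}+t|s|\mu_k\bigr)\asymp\min(\mu_k^{-1},t\mu_k).
\]
Therefore
\[
\|e_k\|_{X_\theta}=\sup_{t>0}t^{-\theta}K(t,e_k)\asymp \mu_k^{2\theta-1},
\]
with the supremum attained near $t=\mu_k^{-2}$. Now put $x_k:=\mu_k^{1-2\theta}e_k$. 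This sequence is bounded in $X_\theta$, while $\|x_k\|_H=\mu_k^{1-2\theta}\ge1$, and the $x_k$ are mutually orthogonal in $H$.

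There are two cases. If $\theta=1/2$, then $(x_k)=(e_k)$ is bounded in $X_\theta$ but has no $H$-convergent subsequence, since $\|e_k-e_j\|_H=\sqrt2$. If $\theta<1/2$, then $\|x_k\|_H\to\infty$, so $X_\theta$ does not even embed boundedly into $H$, and a fortiori not compactly. In both cases Corollary~\ref{cor:defect-lifting} applies, with $E=H\hookrightarrow V^*=E_1$. The lifts $u_k=Rx_k$ are bounded in $\mathbb W_M^{p,\lambda}(I;V,V^*)$ but have no subsequence converging in $C([0,T];H)$. In the case $\theta<1/2$, $u_k(0)$ is already unbounded in $H$.

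The main obstacle I expect is the two-sided estimate for $K(t,e_k)$. The reduction to one coordinate and the explicit minimization over $s$ are elementary, but the constants must be uniform in $k$ so that $(x_k)$ is genuinely bounded in $X_\theta$. The remaining steps are direct applications of results already stated in the paper.
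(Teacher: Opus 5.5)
Your proposal is correct and matches the paper's proof: the same diagonal weighted-$\ell^2$ triple (the paper takes $\mu_k=k$), the same computation $K(t,e_k;V^*,V)=\min\{\mu_k^{-1},t\mu_k\}$ with the normalized sequence $x_k=\mu_k^{1-2\theta}e_k$, the same split into the cases $\theta<1/2$ and $\theta=1/2$, and the same transfer to the evolution class via Corollary~\ref{cor:defect-lifting}. The $K$-functional identity is in fact exact, so the uniformity in $k$ you flag as the main obstacle is automatic, and your Stein--Weiss check of $H=(V^*,V)_{1/2,2}$ verifies a hypothesis of Corollary~\ref{cor:hilbert-triple} that the paper leaves implicit.
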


\begin{proof}
Let
\[
H=\ell^2,
\qquad
V=\left\{x=(x_n):\sum_{n\ge1}n^2|x_n|^2<\infty\right\},
\]
with the natural norm, and let $V^*$ be the weighted space with norm
\[
\|x\|_{V^*}^2=\sum_{n\ge1}n^{-2}|x_n|^2.
\]
Then $V\hookc H\hookrightarrow V^*$.  For the coordinate vector $e_n$, the $K$-functional of the couple $(V^*,V)$ satisfies
\[
K(t,e_n;V^*,V)=\min\{n^{-1},tn\}.
\]
Consequently
\[
\|e_n\|_{(V^*,V)_{\theta,\infty}}
\asymp n^{2\theta-1}.
\]
Set
\[
x_n=n^{1-2\theta}e_n.
\]
Then $(x_n)$ is bounded in $(V^*,V)_{\theta,\infty}$.  If $\theta<1/2$, its $H$ norm tends to infinity, so the trace space does not even embed continuously into $H$.  If $\theta=1/2$, then $x_n=e_n$ is bounded in the trace space and has no convergent subsequence in $H$.  By Corollary~\ref{cor:defect-lifting}, these trace defects lift to bounded sequences in the Morrey evolution class.  Thus no abstract $C([0,T];H)$ compactness theorem can hold at or below the threshold.
\end{proof}

\section{Tightness on unbounded domains}\label{sec:tightness}

Theorem~\ref{thm:same-morrey-aubin-lions} is local in space when the spatial compact embedding is local.  On $\mathbb R^d$, translation destroys global compactness.  The next result shows that the missing ingredient is exactly a spatial tightness condition.

\begin{theorem}\label{thm:tightness-Rd}
Let $1<r<\infty$, $1<p<\infty$, and $0<\lambda<1$.  Let $(u_n)$ satisfy
\[
\sup_n\left(
\|u_n\|_{\mathcal M^{p,\lambda}(I;W^{1,r}(\R^d))}
+
\|u_n'\|_{\mathcal M^{p,\lambda}(I;W^{-1,r}(\R^d))}
\right)<\infty.
\]
Assume in addition the uniform Morrey tightness condition
\[
\lim_{R\to\infty}
\sup_n
\|u_n\|_{\mathcal M^{p,\lambda}(I;L^r(\R^d\setminus B_R))}
=0.
\]
Then $(u_n)$ has a subsequence converging strongly in
\[
\mathcal M^{p,\lambda}(I;L^r(\R^d)).
\]
\end{theorem}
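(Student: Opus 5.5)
The plan is to localize in space, apply Theorem~\ref{thm:same-morrey-aubin-lions} on balls, and then use the tightness hypothesis to control the exterior part in the same Morrey norm.

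Fix a smooth cutoff $\chi_R$ with $\chi_R=1$ on $B_R$, support in $B_{2R}$, and $|\nabla\chi_R|\le C/R$. Multiplication by $\chi_R$ is bounded on $W^{1,r}(\R^d)$, and by duality (since $\chi_R$ is smooth with bounded derivatives) also on $W^{-1,r}(\R^d)$. Hence $v_n^R:=\chi_R u_n$ satisfies $(v_n^R)'=\chi_R u_n'$ and is bounded in $\mathbb W_M^{p,\lambda}(I;W^{1,r}_0(B_{2R}),W^{-1,r}(B_{2R}))$. The Rellich--Kondrachov theorem gives $W^{1,r}_0(B_{2R})\hookc L^r(B_{2R})$, and $L^r(B_{2R})\hookrightarrow W^{-1,r}(B_{2R})$ continuously. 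Concretely, I would take $E_0=W^{1,r}_0(B_{2R})$, $E=L^r(B_{2R})$, and $E_1=W^{-1,r}(B_{2R})$, understood as the dual of $W^{1,r'}_0(B_{2R})$. The restriction of the $W^{-1,r}(\R^d)$ norm of a distribution supported in $B_{2R}$ dominates this dual norm, so the bounds carry over. Theorem~\ref{thm:same-morrey-aubin-lions} then yields, for each fixed $R$, a subsequence of $(\chi_R u_n)$ that is Cauchy in $\mathcal M^{p,\lambda}(I;L^r(\R^d))$.

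Next I take $R=R_k=k\in\mathbb N$ and extract a diagonal subsequence that is Cauchy in $\mathcal M^{p,\lambda}(I;L^r)$ for every $\chi_{k}u_n$. Since $|(1-\chi_R)u|\le|u|\,\mathbf 1_{\R^d\setminus B_R}$ pointwise, for each $t$ we have $\|(1-\chi_R)u(t)\|_{L^r}\le\|u(t)\|_{L^r(\R^d\setminus B_R)}$. This estimate is pointwise in time, so it passes directly to the Morrey norm:
\[
\|u_n-u_m\|_{\mathcal M^{p,\lambda}(I;L^r)}
\le\|\chi_R(u_n-u_m)\|_{\mathcal M^{p,\lambda}(I;L^r)}
+2\sup_j\|u_j\|_{\mathcal M^{p,\lambda}(I;L^r(\R^d\setminus B_R))}.
\]
Given $\varepsilon>0$, I first choose $R=k$ so that the tail is below $\varepsilon$ by the tightness assumption, and then take $n,m$ large. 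This shows the diagonal subsequence is Cauchy. Completeness of $\mathcal M^{p,\lambda}(I;L^r(\R^d))$ (a standard fact: it is a closed subspace of $L^p(I;L^r)$ with a lower semicontinuous norm) then gives strong convergence.

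The main obstacle is the localization step, specifically checking that the cutoff preserves the negative-norm derivative bound on the bounded domain. Two points need care. First, the commutator: although $\chi_R$ is time independent, so $(\chi_Ru_n)'=\chi_Ru_n'$ holds exactly, one still needs multiplication by $\chi_R$ to be bounded on $W^{-1,r}$ with a constant independent of $n$. This follows by duality from boundedness on $W^{1,r'}$. Second, the choice of $E_1$ must make $E\hookrightarrow E_1$ valid. If identifying the right negative space on $B_{2R}$ becomes awkward, I would instead take $E_1=W^{-1,r}(\R^d)$ itself, with $E_0=W^{1,r}_0(B_{2R})$ and $E=L^r(B_{2R})$ extended by zero. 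Theorem~\ref{thm:same-morrey-aubin-lions} only requires $E_0\hookc E\hookrightarrow E_1$ as Banach spaces, which avoids the issue.
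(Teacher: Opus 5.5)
Your proof is correct and follows essentially the paper's route: localize to balls, apply Theorem~\ref{thm:same-morrey-aubin-lions} there, diagonalize over $R\in\mathbb N$, and absorb the exterior via the tightness hypothesis. Your smooth cutoff into $W^{1,r}_0(B_{2R})$, and the option $E_1=W^{-1,r}(\R^d)$, are only cosmetic variants of the paper's plain restriction to $B_R$ with $W^{1,r}(B_R)\hookc L^r(B_R)\hookrightarrow W^{-1,r}(B_R)$. Two minor wording points: the pointwise tail bound needs $0\le\chi_R\le1$, and $\mathcal M^{p,\lambda}(I;L^r)$ is not a closed subspace of $L^p(I;L^r)$; the completeness argument you actually sketch (take the $L^p$ limit, then use lower semicontinuity of the Morrey norm on each interval) is the right one.
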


\begin{proof}
Fix $R>0$.  Restriction to $B_R$ is bounded from $W^{1,r}(\R^d)$ to $W^{1,r}(B_R)$.  The derivative restricts boundedly to $W^{-1,r}(B_R)$ by duality, using zero extension of test functions from $W_0^{1,r'}(B_R)$ to $W^{1,r'}(\R^d)$.  Since
\[
W^{1,r}(B_R)\hookc L^r(B_R)\hookrightarrow W^{-1,r}(B_R),
\]
Theorem~\ref{thm:same-morrey-aubin-lions} gives relative compactness in
\[
\mathcal M^{p,\lambda}(I;L^r(B_R)).
\]
Applying this for $R=1,2,3,\dots$ and taking a diagonal subsequence, we obtain a subsequence that is Cauchy in the local Morrey norm on every ball.

Let $\varepsilon>0$.  Choose $R$ so large that the tightness bound is at most $\varepsilon$ for every $n$.  For the diagonal subsequence and sufficiently large $n,m$,
\[
\|u_n-u_m\|_{\mathcal M^{p,\lambda}(I;L^r(B_R))}<\varepsilon.
\]
By the triangle inequality in $L^r(\R^d)$ and then in the Morrey norm,
\begin{align*}
\|u_n-u_m\|_{\mathcal M^{p,\lambda}(I;L^r(\R^d))}
&\le
\|u_n-u_m\|_{\mathcal M^{p,\lambda}(I;L^r(B_R))}\\
&\quad+
\|u_n\|_{\mathcal M^{p,\lambda}(I;L^r(\R^d\setminus B_R))}\\
&\quad+
\|u_m\|_{\mathcal M^{p,\lambda}(I;L^r(\R^d\setminus B_R))}
<3\varepsilon.
\end{align*}
Hence the subsequence is Cauchy in the global Morrey space.
\end{proof}

\begin{example}\label{ex:translation-defect}
The tightness hypothesis cannot be dropped on $\R^d$.  Let
\[
\phi\in C_c^1(0,T),
\qquad
\psi\in C_c^\infty(\R^d),
\]
be nonzero and set
\[
u_n(t,x)=\phi(t)\psi(x-ne_1).
\]
Translation invariance gives uniform bounds in
\[
\mathcal M^{p,\lambda}(I;W^{1,r}(\R^d))
\]
and for the time derivatives in
\[
\mathcal M^{p,\lambda}(I;W^{-1,r}(\R^d)).
\]
For widely separated indices the spatial supports are disjoint, so the sequence has no Cauchy subsequence in
$\mathcal M^{p,\lambda}(I;L^r(\R^d))$.  Its failure is exactly loss of tightness at spatial infinity.
\end{example}

\section{Hilbert-valued cocompactness and profile decomposition}\label{sec:profiles}

The compactness results above identify what happens when the spatial embedding is compact or when tails are tight.  We now describe the complementary Hilbert-valued situation in which compactness fails only through spatial translations.  Throughout this section let $d\ge3$, let $I=(0,T)$, and define
\[
\begin{aligned}
\mathcal X_\lambda(I):=\{u:\;&u\in\mathcal M^{2,\lambda}(I;H^1(\mathbb R^d)),\\
&u_t\in\mathcal M^{2,\lambda}(I;H^{-1}(\mathbb R^d))\},
\qquad 0<\lambda<1.
\end{aligned}
\]
We also use the Hilbert evolution space
\[
\mathcal E(I)
:=L^2(I;H^1(\mathbb R^d))
\cap H^1(I;H^{-1}(\mathbb R^d)),
\]
with its natural Hilbert norm.  Since $I$ is finite,
\[
\mathcal X_\lambda(I)\hookrightarrow\mathcal E(I)
\]
continuously.  Spatial translations
\[
(T_yu)(t,x):=u(t,x-y),\qquad y\in\mathbb R^d,
\]
act isometrically on both spaces.

The first lemma is a simple but useful principle: a uniform Morrey bound upgrades ordinary $L^2$ convergence to convergence in every strictly weaker Morrey scale.

\begin{lemma}\label{lem:morrey-downgrade}
Let $Y$ be a Banach space, $0\le\mu<\lambda<1$, and let $(f_n)$ be bounded in
$\mathcal M^{2,\lambda}(I;Y)$.  If
\[
\|f_n\|_{L^2(I;Y)}\longrightarrow0,
\]
then
\[
\|f_n\|_{\mathcal M^{2,\mu}(I;Y)}\longrightarrow0.
\]
\end{lemma}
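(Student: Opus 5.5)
The plan is to split the supremum over intervals $J\subset I$ in the $\mathcal M^{2,\mu}$ norm according to the length of $J$, at a threshold $\delta>0$ to be chosen. Let $M:=\sup_n\|f_n\|_{\mathcal M^{2,\lambda}(I;Y)}<\infty$.

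\emph{Short intervals.} For $|J|\le\delta$ we do not use the $L^2$ convergence. We write
\[
|J|^{-\mu/2}\|f_n\|_{L^2(J;Y)}
=|J|^{(\lambda-\mu)/2}\,|J|^{-\lambda/2}\|f_n\|_{L^2(J;Y)}
\le \delta^{(\lambda-\mu)/2}M .
\]
This holds uniformly in $n$. It is the only place where the strict inequality $\mu<\lambda$ enters: the gap exponent $(\lambda-\mu)/2>0$ makes short intervals uniformly negligible.

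\emph{Long intervals.} For $|J|>\delta$ we use the global $L^2$ smallness:
\[
|J|^{-\mu/2}\|f_n\|_{L^2(J;Y)}\le \delta^{-\mu/2}\|f_n\|_{L^2(I;Y)} .
\]
Combining the two cases gives
\[
\|f_n\|_{\mathcal M^{2,\mu}(I;Y)}
\le \delta^{(\lambda-\mu)/2}M+\delta^{-\mu/2}\|f_n\|_{L^2(I;Y)} .
\]
Given $\varepsilon>0$, we first fix $\delta$ so that the first term is at most $\varepsilon$. With $\delta$ fixed, the hypothesis $\|f_n\|_{L^2(I;Y)}\to0$ lets us choose $n$ large enough that the second term is also at most $\varepsilon$. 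Hence $\limsup_n\|f_n\|_{\mathcal M^{2,\mu}(I;Y)}\le2\varepsilon$ for every $\varepsilon>0$, which proves the lemma.

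There is no genuine obstacle here. The key observation is the two-scale split with the order of choices fixed: $\delta$ first, then $n$. The case $\mu=0$ needs no separate treatment, since then the long-interval factor is simply $1$.
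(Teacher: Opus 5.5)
Your proposal is correct and follows essentially the same argument as the paper: split the supremum at a length threshold $\delta$, bound short intervals uniformly by $M\delta^{(\lambda-\mu)/2}$ using the gap $\lambda-\mu>0$, and bound long intervals by $\delta^{-\mu/2}\|f_n\|_{L^2(I;Y)}$, choosing $\delta$ first and then $n$. The paper writes the same estimates in squared form, $M^2\delta^{\lambda-\mu}$ and $\delta^{-\mu}\|f_n\|_{L^2(I;Y)}^2$.
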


\begin{proof}
Let
\[
M:=\sup_n\|f_n\|_{\mathcal M^{2,\lambda}(I;Y)}.
\]
For an interval $J\subset I$ with $|J|\le\delta$,
\[
|J|^{-\mu}\int_J\|f_n(t)\|_Y^2\,dt
\le M^2|J|^{\lambda-\mu}
\le M^2\delta^{\lambda-\mu}.
\]
For $|J|>\delta$,
\[
|J|^{-\mu}\int_J\|f_n(t)\|_Y^2\,dt
\le \delta^{-\mu}\|f_n\|_{L^2(I;Y)}^2.
\]
Given $\varepsilon>0$, first choose $\delta$ so that the first quantity is below $\varepsilon^2$, and then choose $n$ so large that the second is below $\varepsilon^2$.  Taking the supremum over $J$ proves the claim.
\end{proof}

We next record the spatial cocompactness input in a form that we shall use directly.  The statement is classical, but the short proof is included because the time-Morrey argument below depends on its precise translation formulation.

\begin{lemma}\label{lem:spatial-translation-cocompact}
Let $d\ge3$ and $2<q<2^*=2d/(d-2)$.  Suppose $(f_n)$ is bounded in $H^1(\mathbb R^d)$ and
\[
T_{y_n}f_n\rightharpoonup0
\quad\text{weakly in }H^1(\mathbb R^d)
\]
for every sequence $(y_n)\subset\mathbb R^d$.  Then
\[
\|f_n\|_{L^q(\mathbb R^d)}\longrightarrow0.
\]
\end{lemma}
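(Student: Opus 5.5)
We follow Lions' vanishing lemma: first reduce translation-weak nullity to vanishing of local $L^2$ mass on unit cubes, then turn that vanishing into $L^q$ smallness by a localized Sobolev inequality summed over a cube decomposition.

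\emph{Step 1: vanishing of local mass.} Cover $\mathbb R^d$ by the unit cubes $Q_k=k+[0,1)^d$, $k\in\mathbb Z^d$, and set
\[
\varepsilon_n:=\sup_{y\in\mathbb R^d}\|f_n\|_{L^2(y+Q_0)}.
\]
We claim $\varepsilon_n\to0$. If not, pass to a subsequence with $\varepsilon_n\ge2\delta>0$ and pick $y_n$ with $\|f_n\|_{L^2(y_n+Q_0)}\ge\delta$. Then $g_n:=T_{-y_n}f_n$, i.e.\ $g_n=f_n(\cdot+y_n)$, satisfies $\|g_n\|_{L^2(Q_0)}\ge\delta$. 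By hypothesis, applied with the sequence $(-y_n)$ (extended arbitrarily off the subsequence), $g_n\rightharpoonup0$ weakly in $H^1(\mathbb R^d)$. The restriction $H^1(\mathbb R^d)\to L^2(Q_0)$ is compact by Rellich, so $g_n\to0$ in $L^2(Q_0)$. This contradicts $\|g_n\|_{L^2(Q_0)}\ge\delta$.

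\emph{Step 2: localized interpolation on cubes.} Fix the cube $Q_k$. Since $2<q<2^*$, choose $\alpha\in(0,1)$ with $1/q=\alpha/2+(1-\alpha)/2^*$. By Hölder and the Sobolev embedding $H^1(Q_k)\hookrightarrow L^{2^*}(Q_k)$, whose constant is uniform in $k$ by translation invariance,
\[
\|f\|_{L^q(Q_k)}\le C\|f\|_{L^2(Q_k)}^{\alpha}\|f\|_{H^1(Q_k)}^{1-\alpha}.
\]
Raising to the power $q$ gives
\[
\|f\|_{L^q(Q_k)}^q\le C\|f\|_{L^2(Q_k)}^{\alpha q}\|f\|_{H^1(Q_k)}^{(1-\alpha)q}.
\]

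\emph{Step 3: summation.} The plan is to estimate $\alpha q$ factors by $\varepsilon_n$ while leaving exactly the power $2$ on $\|f\|_{H^1(Q_k)}$, so that the sum over $k$ is bounded by $\|f_n\|_{H^1}^2$. This requires $(1-\alpha)q\le2$ together with some positive power of $\varepsilon_n$ remaining, which may fail for $q$ near $2^*$.

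\emph{Main obstacle and its resolution.} To keep the exponents admissible, we do not interpolate between $L^2$ and $L^{2^*}$ directly. Instead we first prove the result for the single exponent $q_0=2+4/d$. There $(1-\alpha)q_0=2$, since the cube Gagliardo--Nirenberg inequality gives
\[
\|f\|_{L^{q_0}(Q_k)}^{q_0}\le C\|f\|_{L^2(Q_k)}^{4/d}\|f\|_{H^1(Q_k)}^{2}.
\]
Summing over $k\in\mathbb Z^d$ yields
\[
\|f_n\|_{L^{q_0}}^{q_0}\le C\varepsilon_n^{4/d}\|f_n\|_{H^1}^2\to0.
\]
For general $q\in(2,2^*)$, interpolate by Hölder between $L^{q_0}$ and an endpoint where $(f_n)$ stays bounded: use $L^2$ when $q<q_0$, and $L^{2^*}$ when $q>q_0$. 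Both are bounded by Sobolev and the uniform $H^1$ bound. Since $q$ lies strictly between $q_0$ and the chosen endpoint, a positive power of $\|f_n\|_{L^{q_0}}$ appears and forces $\|f_n\|_{L^q}\to0$.

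Thus the genuinely delicate point is only the exponent bookkeeping in the summation. Passing through the single exponent $q_0=2+4/d$ and then interpolating handles it, so that the rest is routine.
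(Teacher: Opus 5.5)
Your proof is correct and is essentially the paper's argument: Rellich compactness converts translation-weak nullity into vanishing of the supremal local $L^2$ mass, the cube Gagliardo--Nirenberg inequality at $q_0=2+4/d$ summed over the cubes gives $\|f_n\|_{L^{q_0}}^{q_0}\le C\varepsilon_n^{4/d}\|f_n\|_{H^1}^2\to0$, and H\"older interpolation with the uniform $L^2$ and $L^{2^*}$ bounds then covers every $2<q<2^*$. The only differences are cosmetic: you work on the partition cubes themselves, whereas the paper uses enlarged cubes and balls $B_2(y)$. The obstacle you flag in Step 3 is not genuine, since for $q\le q_0$ you can absorb part of the $L^2(Q_k)$ factor into $\|f\|_{H^1(Q_k)}$, and for $q>q_0$ you can bound the excess $H^1(Q_k)$ power by the global norm; routing through $q_0$, as both you and the paper do, is nonetheless perfectly fine.
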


\begin{proof}
Cover $\mathbb R^d$ by unit cubes with uniformly bounded overlap after a fixed enlargement.  The local Gagliardo--Nirenberg inequality gives
\[
\|f\|_{L^{2+4/d}(Q)}^{2+4/d}
\le C\|f\|_{L^2(2Q)}^{4/d}\|f\|_{H^1(2Q)}^2.
\]
After summing over the cubes,
\[
\|f\|_{L^{2+4/d}}^{2+4/d}
\le C
\left(\sup_{y\in\mathbb R^d}\|f\|_{L^2(B_2(y))}\right)^{4/d}
\|f\|_{H^1}^2.
\]
If the local $L^2$ supremum did not tend to zero, there would be centers $y_n$ and $c>0$ such that
\[
\|f_n\|_{L^2(B_2(y_n))}\ge c.
\]
A subsequence of $T_{y_n}f_n$ converges weakly in $H^1$; Rellich compactness on $B_2$ then makes the convergence strong in local $L^2$, so the weak limit is nonzero.  This contradicts the hypothesis.  Hence $f_n\to0$ in $L^{2+4/d}$.  Interpolation with the uniform $L^2$ and $L^{2^*}$ bounds gives convergence in every $L^q$, $2<q<2^*$.
\end{proof}

Abstract formulations of this cocompactness principle and its interpolation properties can be found in \cite{CwikelTintarev2013,SoliminiTintarev2015}.

\begin{proposition}\label{prop:evolution-cocompact-L2}
Let $2<q<2^*$.  Suppose $(u_n)$ is bounded in $\mathcal E(I)$ and satisfies
\[
T_{y_n}u_n\rightharpoonup0
\quad\text{weakly in }\mathcal E(I)
\]
for every sequence $(y_n)\subset\mathbb R^d$.  Then
\[
\|u_n\|_{L^2(I;L^q(\mathbb R^d))}\longrightarrow0.
\]
\end{proposition}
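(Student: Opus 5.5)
The plan is to reduce to a space-time version of Lemma~\ref{lem:spatial-translation-cocompact}. Translation-weak nullity in $\mathcal E(I)$, combined with local compactness on balls, should give uniform smallness of the local $L^2$ mass. A localized Gagliardo--Nirenberg inequality, integrated in time, then converts that mass smallness into $L^2_tL^q_x$ smallness. Since $\mathcal E(I)\hookrightarrow C([0,T];L^2(\R^d))$ (the classical Lions--Magenes trace, $L^2=(H^{-1},H^1)_{1/2,2}$), we have $\sup_n\sup_t\|u_n(t)\|_{L^2}<\infty$. Interpolating against the uniform $L^2_tL^{2^*}_x$ bound, it suffices to prove convergence for a single exponent $q\in(2,2^*)$. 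It is also enough to work with subsequences.

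\emph{Step 1 (uniform averaged local smallness).} Put $m_n:=\sup_{y\in\R^d}\|u_n\|_{L^2(I;L^2(B_2(y)))}$. I claim $m_n\to0$. If not, choose $y_n$ with $\|T_{-y_n}u_n\|_{L^2(I;L^2(B_2))}\ge c>0$. The sequence $T_{-y_n}u_n$ is bounded in $\mathcal E(I)$ and, by hypothesis, tends weakly to $0$ there. Restrict to $B_3$ exactly as in the proof of Theorem~\ref{thm:tightness-Rd}. The classical Aubin--Lions theorem for $H^1(B_3)\hookc L^2(B_3)\hookrightarrow H^{-1}(B_3)$ then gives strong convergence in $L^2(I;L^2(B_3))$ along a subsequence. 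The limit must be the weak limit $0$, which contradicts the lower bound $c$.

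\emph{Step 2 (from averaged to pointwise-in-time mass; the main obstacle).} The spatial argument needs the local mass at each time, $M_n:=\sup_{t,y}\|u_n(t)\|_{L^2(B_2(y))}$, not its time average. To pass from Step~1 to $M_n$ I will use a time-averaging estimate. Fix a cutoff $\varphi_y$ adapted to $B_2(y)$ and write
\[
\tfrac{d}{dt}\|\varphi_y u\|_{L^2}^2=2\langle u_t,\varphi_y^2u\rangle .
\]
For $t$ in an interval $J$ of length $\delta$ this gives
\[
\|\varphi_yu(t)\|_{L^2}^2\le \delta^{-1}\|\varphi_yu\|_{L^2(J;L^2)}^2+2\int_J\|u_t\|_{H^{-1}}\|\varphi_y^2u\|_{H^1}\,ds .
\]
The first term is at most $\delta^{-1}m_n^2\to0$ for fixed $\delta$. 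The second term is the difficulty: it is bounded only by $\|u_t\|_{L^2H^{-1}}\|u\|_{L^2(J;H^1)}$, and this need not be small uniformly in $n$.

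I would therefore first try to avoid pointwise mass altogether. The idea is to integrate the local Gagliardo--Nirenberg inequality in time and use Hölder with respect to the measure $d\nu=\sum_Q\|u_n(t)\|_{H^1(2Q)}^2\,dt$, whose total mass is bounded. With $m(t,Q)=\|u_n(t)\|_{L^2(2Q)}^2$ this gives
\[
\int_I\|u_n\|_{L^{2+4/d}}^{2+4/d}dt\le C\int m^{2/d}\,d\nu\le C\Bigl(\int m\,d\nu\Bigr)^{2/d}\nu(I)^{1-2/d}.
\]
It then remains to show $\int m\,d\nu\to0$. To do so I bound $m(t,Q)$ by its average over a short window $J\ni t$ plus the cross term of the identity above. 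After summing over cubes, the cross term is paired against $\|u\|_{H^1(2Q)}^2$ and is of order
\[
\delta^{1/2}\|u_t\|_{L^2H^{-1}}\,\|u\|_{L^2H^1}\,\sup_t\|u\|_{L^2},
\]
which tends to zero as $\delta\to0$ uniformly in $n$ (this is the step I expect to need the most care). The averaged term tends to zero as $n\to\infty$ for fixed $\delta$, by Step~1.

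\emph{Step 3 (conclusion).} Steps~1 and~2 give $u_n\to0$ in $L^{2+4/d}(I\times\R^d)$. The uniform bounds in $L^\infty_tL^2_x$ and $L^2_tL^{2^*}_x$ then give $u_n\to0$ in $L^2(I;L^q)$ for every $2<q<2^*$, by Hölder interpolation in both $x$ and $t$.
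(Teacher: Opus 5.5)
\textbf{Step~2 contains a genuine gap.} Your preliminary reductions, Step~1 and Step~3 are correct. Step~2, however, fails outright: the intermediate claim $\int m\,d\nu\to0$ is false under the hypotheses. Take nonzero $\rho\in C_c^\infty(-1,1)$, $\psi\in C_c^\infty(\mathbb R^d)$, $t_0\in(0,T)$, and
\[
u_n(t,x)=\rho\bigl(n^2(t-t_0)\bigr)\,\psi(x)\cos(nx_1).
\]
Evaluate $\rho,\rho'$ at $n^2(t-t_0)$. Then $\|u_n(t)\|_{H^1}\sim n|\rho|$ and $\|\partial_tu_n(t)\|_{H^{-1}}\sim n|\rho'|$ on a time set of length $O(n^{-2})$, so $(u_n)$ is bounded in $\mathcal E(I)$. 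Also $\|u_n\|_{L^2(I;L^2)}=O(n^{-1})$, so every translated sequence $T_{y_n}u_n$ is weakly null in $\mathcal E(I)$: any weak limit must equal the strong $L^2_{t,x}$ limit $0$. Yet $m(t,Q)=|\rho|^2\bigl(\tfrac12\|\psi\|_{L^2(2Q)}^2+o(1)\bigr)$ and $\|u_n(t)\|_{H^1(2Q)}^2=n^2|\rho|^2\bigl(\tfrac12\|\psi\|_{L^2(2Q)}^2+o(1)\bigr)$, hence
\[
\int m\,d\nu\longrightarrow\frac14\|\rho\|_{L^4}^4\sum_Q\|\psi\|_{L^2(2Q)}^4>0,
\]
even though $\|u_n\|_{L^2(I;L^q)}=O(n^{-1})$. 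The unit-scale spatial Gagliardo--Nirenberg inequality loses a factor $n^2$ on such high-frequency data. Integrating $m\cdot\|u\|_{H^1(2Q)}^2$ in time cannot recover this, because the high frequencies live on a time set of size $n^{-2}$.

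For the same reason, your cross-term bound $\delta^{1/2}\|u_t\|_{L^2H^{-1}}\|u\|_{L^2H^1}\sup_t\|u\|_{L^2}$ is unjustified. A factor $\delta^{1/2}$ would require an $L^\infty_t$ bound on $\|u\|_{H^1}$ or $\|u_t\|_{H^{-1}}$, and $\sup_t\|u\|_{L^2}$ does not control $\langle u_t,\varphi_Q^2u\rangle$. In the example the averaged term is $O(\delta^{-1}n^{-2})$, so the cross term stays of order one for every fixed $\delta$. The pointwise mass $M_n$ also stays bounded below, as you suspected.

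\textbf{The missing idea} is to time-average $u$ itself rather than the local mass. Let $P_h$ be the piecewise-constant average on cells of length $h$.
\begin{itemize}
\item The Poincar\'e inequality in $H^{-1}$ gives $\|u-P_hu\|_{L^2(I;H^{-1})}\le Ch\|u_t\|_{L^2(I;H^{-1})}$.
\item Combining this with $\|v\|_{L^2}^2\le C\|v\|_{H^{-1}}\|v\|_{H^1}$ and Cauchy--Schwarz in time yields $\|u-P_hu\|_{L^2(I;L^2)}\le Ch^{1/2}$, uniformly on bounded subsets of $\mathcal E(I)$. This is where the gain you were looking for actually lives.
\item The \emph{global} Gagliardo--Nirenberg inequality and H\"older in time then give $\|u-P_hu\|_{L^2(I;L^q)}\le Ch^{(1-a)/2}$, with $a=d(\frac12-\frac1q)$.
\item For fixed $h$ there are finitely many cell averages $a_{n,k}$. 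Each sequence $(a_{n,k})_n$ is bounded in $H^1$ and translation-weakly null, because averaging is a bounded linear map $\mathcal E(I)\to H^1$ that commutes with translations. Lemma~\ref{lem:spatial-translation-cocompact} therefore gives $\|P_hu_n\|_{L^2(I;L^q)}\to0$.
\item Finally let $h\downarrow0$.
\end{itemize}
This route needs neither pointwise-in-time mass nor your Step~1. If you want to keep Step~1, you would instead need a space--time Gagliardo--Nirenberg inequality for $H^{1/2}(I;L^2)\cap L^2(I;H^1)$, into which $\mathcal E(I)$ embeds. For instance, use the exponent $2+4/(d+2)$ on unit cylinders $I\times Q$, so that the local $L^2_{t,x}$ mass appears to a positive power while the energy appears squared. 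That is a different argument from the one you sketched.
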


\begin{proof}
Fix a mesh size $h>0$ and partition $I$ into finitely many intervals $I_k$ of length at most $h$.  Let $P_hu$ be the piecewise constant time average
\[
(P_hu)(t):=\frac1{|I_k|}\int_{I_k}u(s)\,ds,
\qquad t\in I_k.
\]
The one-dimensional Poincar\'e inequality in the Hilbert space $H^{-1}$ gives
\[
\|u-P_hu\|_{L^2(I;H^{-1})}
\le Ch\|u_t\|_{L^2(I;H^{-1})}.
\]
Also
\[
\|u-P_hu\|_{L^2(I;H^1)}
\le2\|u\|_{L^2(I;H^1)}.
\]
Using the Hilbert-scale inequality
\[
\|v\|_{L^2_x}^2
\le C\|v\|_{H^{-1}_x}\|v\|_{H^1_x}
\]
and Cauchy--Schwarz in time, we obtain, uniformly on bounded subsets of $\mathcal E(I)$,
\[
\|u-P_hu\|_{L^2(I;L^2)}
\le C h^{1/2}.
\]
Let
\[
a=d\left(\frac12-\frac1q\right)\in(0,1).
\]
The Gagliardo--Nirenberg inequality
\[
\|v\|_{L^q}
\le C\|v\|_{L^2}^{1-a}\|v\|_{H^1}^{a}
\]
and H\"older's inequality in time then give
\[
\|u-P_hu\|_{L^2(I;L^q)}
\le C h^{(1-a)/2}
\]
uniformly for $u$ in a fixed bounded subset of $\mathcal E(I)$.

For fixed $h$ and each cell $I_k$, put
\[
a_{n,k}:=\frac1{|I_k|}\int_{I_k}u_n(s)\,ds\in H^1(\mathbb R^d).
\]
The sequence $(a_{n,k})_n$ is bounded in $H^1$.  Moreover, for every sequence $(y_n)$,
\[
T_{y_n}a_{n,k}\rightharpoonup0
\quad\text{weakly in }H^1,
\]
because time averaging is a bounded linear map from $\mathcal E(I)$ to $H^1$.  Lemma~\ref{lem:spatial-translation-cocompact} therefore yields
\[
\|a_{n,k}\|_{L^q}\longrightarrow0
\]
for each fixed $k$.  Since the partition has finitely many cells,
\[
\|P_hu_n\|_{L^2(I;L^q)}^2
=\sum_k |I_k|\|a_{n,k}\|_{L^q}^2
\longrightarrow0.
\]
Consequently,
\[
\limsup_{n\to\infty}\|u_n\|_{L^2(I;L^q)}
\le C h^{(1-a)/2}.
\]
Letting $h\downarrow0$ proves the assertion.
\end{proof}

\begin{theorem}\label{thm:morrey-cocompact}
Let $0\le\mu<\lambda<1$ and $2<q<2^*$.  Suppose $(u_n)$ is bounded in $\mathcal X_\lambda(I)$ and
\[
T_{y_n}u_n\rightharpoonup0
\quad\text{weakly in }\mathcal E(I)
\]
for every sequence $(y_n)\subset\mathbb R^d$.  Then
\[
\|u_n\|_{\mathcal M^{2,\mu}(I;L^q(\mathbb R^d))}
\longrightarrow0.
\]
\end{theorem}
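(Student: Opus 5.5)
The plan is to combine Proposition~\ref{prop:evolution-cocompact-L2} with the Morrey downgrading Lemma~\ref{lem:morrey-downgrade}, applied with $Y=L^q(\mathbb R^d)$. Since $\mathcal X_\lambda(I)\hookrightarrow\mathcal E(I)$ continuously, the sequence $(u_n)$ is bounded in $\mathcal E(I)$. The translation-weak-nullity hypothesis is exactly the one in Proposition~\ref{prop:evolution-cocompact-L2}. Hence
\[
\|u_n\|_{L^2(I;L^q(\mathbb R^d))}\longrightarrow0 .
\]

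Next we need a uniform bound for $(u_n)$ in $\mathcal M^{2,\lambda}(I;L^q(\mathbb R^d))$. For $2<q<2^*$ the Sobolev embedding gives $H^1(\mathbb R^d)\hookrightarrow L^q(\mathbb R^d)$, with $\|v\|_{L^q}\le C\|v\|_{H^1}$. We apply this pointwise in time and integrate over any interval $J\subset I$. After multiplying by $|J|^{-\lambda/2}$ and taking the supremum, this yields
\[
\|u_n\|_{\mathcal M^{2,\lambda}(I;L^q)}\le C\|u_n\|_{\mathcal M^{2,\lambda}(I;H^1)}\le C\sup_m\|u_m\|_{\mathcal X_\lambda(I)}<\infty .
\]
Measurability of $t\mapsto\|u_n(t)\|_{L^q}$ is inherited from the strong measurability of $u_n$ as an $H^1$-valued function, composed with the continuous embedding.

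Lemma~\ref{lem:morrey-downgrade} then applies directly with $\mu<\lambda$, and gives $\|u_n\|_{\mathcal M^{2,\mu}(I;L^q)}\to0$. The case $\mu=0$ is covered too, because it is just $L^2$ convergence up to the supremum over subintervals, which is dominated by the full-interval norm.

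No step is expected to be a real obstacle, since the analytic content sits in the time-averaging argument of Proposition~\ref{prop:evolution-cocompact-L2}. The one point needing care is that the hypothesis is stated as weak convergence in $\mathcal E(I)$, not in $\mathcal X_\lambda(I)$. This is exactly the form Proposition~\ref{prop:evolution-cocompact-L2} requires, so the Morrey bound is used only for the downgrade step. The strict inequality $\mu<\lambda$ is what makes the short-interval contribution small: we have $|J|^{\lambda-\mu}\to0$ as $|J|\to0$, uniformly in $n$.
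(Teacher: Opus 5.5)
Your proposal is correct and matches the paper's proof essentially verbatim: Proposition~\ref{prop:evolution-cocompact-L2} gives $L^2(I;L^q)$ convergence, and the Sobolev embedding $H^1\hookrightarrow L^q$ gives a uniform $\mathcal M^{2,\lambda}(I;L^q)$ bound. Lemma~\ref{lem:morrey-downgrade} with $Y=L^q(\mathbb R^d)$ then upgrades this to $\mathcal M^{2,\mu}$ convergence.
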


\begin{proof}
By Proposition~\ref{prop:evolution-cocompact-L2},
\[
\|u_n\|_{L^2(I;L^q)}\to0.
\]
Sobolev embedding gives the uniform bound
\[
\sup_n\|u_n\|_{\mathcal M^{2,\lambda}(I;L^q)}<\infty.
\]
Lemma~\ref{lem:morrey-downgrade}, with $Y=L^q(\mathbb R^d)$, completes the proof.
\end{proof}

We now isolate the Hilbert translation-profile extraction needed below.  It is the standard greedy dislocation argument, included here to keep the Morrey remainder theorem independent of an abstract profile-decomposition black box.

\begin{lemma}\label{lem:translation-dislocation-E}
Let
\[
\mathcal E(I)=L^2(I;H^1(\mathbb R^d))\cap H^1(I;H^{-1}(\mathbb R^d)).
\]
Then spatial translations act unitarily on $\mathcal E(I)$.  Moreover:
\begin{enumerate}[label=\textup{(\roman*)}]
\item if $|y_n|\to\infty$, then $T_{y_n}U\rightharpoonup0$ weakly in $\mathcal E(I)$ for every $U\in\mathcal E(I)$;
\item if $(y_n)$ is bounded, then after extraction $y_n\to y$ and $T_{y_n}U\to T_yU$ strongly in $\mathcal E(I)$ for every $U\in\mathcal E(I)$.
\end{enumerate}
\end{lemma}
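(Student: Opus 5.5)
The plan is to work on the Fourier side in the spatial variable, where everything becomes explicit. The norm of $\mathcal E(I)$ is
\[
\|U\|_{\mathcal E}^2=\int_I\int_{\mathbb R^d}\langle\xi\rangle^{2}|\hat U(t,\xi)|^2\,d\xi\,dt+\int_I\|U(t)\|_{L^2}^2+\|U_t(t)\|_{H^{-1}}^2\,dt ,
\]
with $\|v\|_{H^{-1}}^2=\int\langle\xi\rangle^{-2}|\hat v|^2d\xi$. Translation acts by $\widehat{T_yU}(t,\xi)=e^{-iy\cdot\xi}\hat U(t,\xi)$ and commutes with $\partial_t$. Since this is multiplication by a unimodular factor, every term of the norm is preserved. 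Hence $T_y$ is an isometry of $\mathcal E(I)$ onto itself with inverse $T_{-y}$, so it is unitary.

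For (ii), I will extract a subsequence with $y_n\to y$ in $\mathbb R^d$, which is possible since $(y_n)$ is bounded. Because $T_{y_n}U-T_yU=T_y(T_{y_n-y}U-U)$ and $T_y$ is unitary, it suffices to show $\|T_hU-U\|_{\mathcal E}\to0$ as $h\to0$. On the Fourier side, each term of the squared norm is an integral over $I\times\mathbb R^d$ of $|e^{-ih\cdot\xi}-1|^2$ times an integrable weight:
\[
\langle\xi\rangle^2|\hat U|^2,\qquad\text{respectively}\qquad \langle\xi\rangle^{-2}|\hat U_t|^2 .
\]
The factor $|e^{-ih\cdot\xi}-1|^2$ is bounded by $4$ and tends to $0$ pointwise, so dominated convergence gives the claim. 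This strong continuity of the translation group is the only real content of (ii).

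For (i), I will use density together with unitarity. Since $\|T_{y_n}U\|_{\mathcal E}=\|U\|_{\mathcal E}$ is bounded, it suffices to test weak convergence against a dense set of $V$. By boundedness, $\langle T_{y_n}U,V\rangle$ changes by at most $\|U\|\,\|V-\tilde V\|$ when $V$ is replaced by $\tilde V$. Applying the same bound with the roles of $U$ and $V$ swapped (via the adjoint $T_{-y_n}$), it suffices to take both $U$ and $V$ from a dense class. For that class I take finite sums of products $\phi(t)\psi(x)$ with $\phi\in C^1(\bar I)$ and $\psi\in C_c^\infty(\mathbb R^d)$. Their density in $\mathcal E(I)$ follows, for example, from mollification in time after extension together with spatial truncation and mollification; this is the one point needing a short standard justification.

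For such products the inner product factors. It becomes a finite sum of terms of the form
\[
\int_I\phi_1\bar\phi_2\,dt\,\langle T_{y_n}\psi_1,\psi_2\rangle_{H^1}
\qquad\text{and}\qquad
\int_I\phi_1'\bar\phi_2'\,dt\,\langle T_{y_n}\psi_1,\psi_2\rangle_{H^{-1}}.
\]
Once $|y_n|$ exceeds the sum of the support radii of $\psi_1$ and $\psi_2$, the supports of $T_{y_n}\psi_1$ and $\psi_2$ are disjoint. The $H^1$ pairing, which is local, then vanishes exactly. The $H^{-1}$ pairing is not local, so I will write it as
\[
\int\langle\xi\rangle^{-2}e^{-iy_n\cdot\xi}\hat\psi_1\overline{\hat\psi_2}\,d\xi .
\]
This is the Fourier transform of the $L^1$ function $\langle\xi\rangle^{-2}\hat\psi_1\overline{\hat\psi_2}$ evaluated at $y_n$, so it tends to $0$ by the Riemann–Lebesgue lemma. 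The Riemann–Lebesgue argument applies to all the pairings at once, so I will use it throughout. The main step needing care is the density claim; the rest is routine.
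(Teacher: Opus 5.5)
Your proof is correct and follows the paper's route: unitarity from translation invariance of the $H^{\pm1}$ norms (including on the time derivative), (i) by density plus vanishing of each Hilbert pairing, where your Riemann--Lebesgue step supplies the argument for the nonlocal $H^{-1}$ pairing that the paper only asserts, and (ii) by strong continuity of translations with dominated convergence. You can even skip the density step you flagged: for arbitrary $U,V\in\mathcal E(I)$ the function $\xi\mapsto\int_I\langle\xi\rangle^{2}\hat U(t,\xi)\overline{\hat V(t,\xi)}\,dt$ (and its $\langle\xi\rangle^{-2}\hat U_t\overline{\hat V_t}$ analogue) lies in $L^1(\mathbb R^d)$ by Cauchy--Schwarz, so Riemann--Lebesgue gives (i) directly.
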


\begin{proof}
Unitarity follows from translation invariance of the $H^1$ and $H^{-1}$ norms, applied also to the time derivative.  For \textup{(i)}, first take $U$ and a test vector $\Phi$ smooth in time and compactly supported in the spatial variable.  Each of the Hilbert pairings defining the $\mathcal E(I)$ inner product tends to zero when the spatial translate leaves every compact set.  Density of such test functions in $\mathcal E(I)$ and unitarity give the assertion for arbitrary $U$ and arbitrary test vectors.  For \textup{(ii)}, take a convergent subsequence $y_n\to y$.  Strong continuity of translations in $H^1$ and $H^{-1}$, followed by dominated convergence in time, yields strong convergence in both Hilbert components of $\mathcal E(I)$.
\end{proof}

\begin{lemma}\label{lem:hilbert-translation-profile}
Let $(u_n)$ be bounded in the Hilbert space $\mathcal E(I)$.  After passing to a subsequence, there exist profiles $U^j\in\mathcal E(I)$, translations $y_n^j\in\mathbb R^d$, and remainders $r_n^J$ such that for every finite $J$,
\[
u_n=\sum_{j=1}^J T_{y_n^j}U^j+r_n^J,
\]
with
\[
|y_n^j-y_n^k|\to\infty\qquad(j\ne k),
\]
and
\[
\|u_n\|_{\mathcal E(I)}^2
=\sum_{j=1}^J\|U^j\|_{\mathcal E(I)}^2
+\|r_n^J\|_{\mathcal E(I)}^2+o_n(1).
\]
Moreover the residual concentration vanishes in the sense that
\[
\lim_{J\to\infty}\eta(r^J)=0,
\]
where
\[
\eta(v):=\sup\left\{\|W\|_{\mathcal E(I)}:\
\begin{array}{l}
T_{-z_n}v_n\rightharpoonup W\text{ along a subsequence},\\
(z_n)\subset\mathbb R^d
\end{array}\right\}.
\]
\end{lemma}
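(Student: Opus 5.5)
We run the standard greedy dislocation argument in the Hilbert space $\mathcal E(I)$, relying only on the unitarity and the two alternatives of Lemma~\ref{lem:translation-dislocation-E}.

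Set $r_n^0:=u_n$. Suppose $r_n^{J-1}$ has been built and is bounded in $\mathcal E(I)$. If $\eta(r^{J-1})=0$ we stop and take all further profiles to be zero. Otherwise we choose $(z_n)$ and a subsequence with
\[
T_{-z_n}r_n^{J-1}\rightharpoonup U^J,\qquad \|U^J\|_{\mathcal E(I)}\ge\tfrac12\eta(r^{J-1}).
\]
We then set $y_n^J:=z_n$ and $r_n^J:=r_n^{J-1}-T_{y_n^J}U^J$. Since $T_{-y_n^J}$ is unitary, $T_{-y_n^J}r_n^{J-1}-U^J\rightharpoonup0$, and expanding the square gives
\[
\|r_n^{J-1}\|^2=\|U^J\|^2+\|r_n^J\|^2+o_n(1).
\]
Telescoping these identities yields the Pythagorean expansion. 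In particular $\sum_j\|U^j\|^2\le\limsup_n\|u_n\|^2<\infty$, so $\|U^J\|\to0$ and hence $\eta(r^{J-1})\le2\|U^J\|\to0$. A diagonal extraction over $J$ produces one subsequence that works for every finite $J$.

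The main point is orthogonality, $|y_n^j-y_n^k|\to\infty$ for $j<k$. We first show $T_{-y_n^j}r_n^{k-1}\rightharpoonup0$ for every $j<k$. Suppose instead that, along a subsequence, $y_n^k-y_n^j$ stays bounded. By Lemma~\ref{lem:translation-dislocation-E}(ii) we may assume $y_n^k-y_n^j\to y$. Then
\[
T_{-y_n^k}r_n^{k-1}=T_{-(y_n^k-y_n^j)}\,T_{-y_n^j}r_n^{k-1}
\]
is a product of strongly convergent unitaries applied to a weakly null bounded sequence, so it converges weakly to $0$. This contradicts $U^k\neq0$. Hence the translations are pairwise orthogonal, and zero profiles can be assigned translations that are artificially far apart.

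The weak nullity is proved by induction on $k$. At $k=j+1$ it holds by the construction of $U^j$. For the step, $T_{-y_n^j}r_n^{k}=T_{-y_n^j}r_n^{k-1}-T_{y_n^k-y_n^j}U^k$, and the last term is weakly null by Lemma~\ref{lem:translation-dislocation-E}(i), because $|y_n^k-y_n^j|\to\infty$ from the orthogonality already established at that stage. The two statements (weak nullity and orthogonality) therefore have to be advanced simultaneously, and we expect this interleaved induction to be the only delicate step.

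Finally, we must handle the subtlety that $\eta$ is computed along subsequences of the final diagonal subsequence, whereas each $U^J$ was chosen near-optimally for the subsequence available at stage $J$. Passing to a further subsequence can only decrease $\eta$, so the estimate $\eta(r^J)\le 2\|U^{J+1}\|$ persists along the final subsequence, and this gives $\lim_{J\to\infty}\eta(r^J)=0$.
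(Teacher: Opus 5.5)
Your proposal is correct and follows the paper's greedy dislocation argument: you choose near-optimal profiles with $\|U^J\|\ge\tfrac12\eta(r^{J-1})$, obtain the Pythagorean expansion from Hilbert orthogonality, deduce divergence of the translations from weak nullity, and get $\eta(r^J)\to0$ from square summability of the profile norms. You also make explicit two points the paper only sketches: the interleaved induction giving $T_{-y_n^j}r_n^{k-1}\rightharpoonup0$ for all $j<k$ (which is what "the previously imposed weak-null condition" actually requires), and the fact that $\eta$ can only decrease when passing to a subsequence, which is what makes the diagonal extraction legitimate.
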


\begin{proof}
Set $r_n^0=u_n$.  If $\eta(r^0)=0$, there is nothing to extract.  Otherwise, after taking a subsequence, choose translations $y_n^1$ and a nonzero profile $U^1$ such that
\[
T_{-y_n^1}r_n^0\rightharpoonup U^1,
\qquad
\|U^1\|_{\mathcal E(I)}\ge\frac12\eta(r^0),
\]
and put $r_n^1=r_n^0-T_{y_n^1}U^1$.  Then
\[
T_{-y_n^1}r_n^1\rightharpoonup0,
\]
and Hilbert orthogonality gives
\[
\|r_n^0\|_{\mathcal E(I)}^2
=\|U^1\|_{\mathcal E(I)}^2+\|r_n^1\|_{\mathcal E(I)}^2+o_n(1).
\]
Iterate this construction.  At step $J+1$ choose a profile whose norm is at least $\eta(r^J)/2$.  If a new translation $y_n^{J+1}$ stayed at bounded distance from one of the previous translations, then after extraction the two translations would differ by a fixed limit and the previously imposed weak-null condition would force the new profile to vanish.  Hence distinct translation parameters diverge.

Iterating the Pythagorean identity gives
\[
\sum_{j\ge1}\|U^j\|_{\mathcal E(I)}^2
\le\limsup_n\|u_n\|_{\mathcal E(I)}^2.
\]
If $\eta(r^J)$ did not tend to zero, the construction would extract infinitely many profiles with norms bounded below by one fixed positive number, contradicting the preceding square-summability.  A standard diagonal choice of the nested subsequences makes all finite decompositions valid on one final subsequence.
\end{proof}

This is the Hilbert specialization of the general dislocation philosophy; compare \cite{SoliminiTintarev2015}.  The new point below is that the time-Morrey bounds survive the extraction and force a Morrey-small remainder.

\begin{theorem}\label{thm:morrey-translation-profiles}
Let $(u_n)$ be bounded in $\mathcal X_\lambda(I)$.  After passing to a subsequence, there exist profiles
\[
U^j\in\mathcal X_\lambda(I),\qquad j\ge1,
\]
and translations $y_n^j\in\mathbb R^d$ such that, for every finite $J$,
\[
u_n(t,x)
=\sum_{j=1}^J U^j(t,x-y_n^j)+r_n^J(t,x),
\]
and the following properties hold.

\begin{enumerate}[label=\textup{(\roman*)}]
\item For $j\ne k$,
\[
|y_n^j-y_n^k|\longrightarrow\infty.
\]
\item The Hilbert evolution energy decouples:
\[
\|u_n\|_{\mathcal E(I)}^2
=\sum_{j=1}^J\|U^j\|_{\mathcal E(I)}^2
+\|r_n^J\|_{\mathcal E(I)}^2+o_n(1).
\]
In particular,
\[
\sum_{j\ge1}\|U^j\|_{\mathcal E(I)}^2
\le\limsup_{n\to\infty}\|u_n\|_{\mathcal E(I)}^2.
\]
\item For every time interval $K\subset I$,
\begin{align*}
\sum_{j\ge1}\|U^j\|_{L^2(K;H^1)}^2
&\le\liminf_{n\to\infty}\|u_n\|_{L^2(K;H^1)}^2,\\
\sum_{j\ge1}\|U_t^j\|_{L^2(K;H^{-1})}^2
&\le\liminf_{n\to\infty}\|(u_n)_t\|_{L^2(K;H^{-1})}^2.
\end{align*}
Consequently the profile energy densities obey the Morrey--Bessel bounds
\begin{align*}
\sup_{K\subset I}|K|^{-\lambda}
\sum_{j\ge1}\|U^j\|_{L^2(K;H^1)}^2
&\le M_0^2,\\
\sup_{K\subset I}|K|^{-\lambda}
\sum_{j\ge1}\|U_t^j\|_{L^2(K;H^{-1})}^2
&\le M_1^2,
\end{align*}
where
\[
M_0:=\limsup_n\|u_n\|_{\mathcal M^{2,\lambda}(I;H^1)},
\qquad
M_1:=\limsup_n\|(u_n)_t\|_{\mathcal M^{2,\lambda}(I;H^{-1})}.
\]
\item For every $0\le\mu<\lambda$ and every $2<q<2^*$,
\[
\boxed{
\lim_{J\to\infty}\limsup_{n\to\infty}
\|r_n^J\|_{\mathcal M^{2,\mu}(I;L^q)}=0.
}
\]
\end{enumerate}
\end{theorem}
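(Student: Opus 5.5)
We apply Lemma~\ref{lem:hilbert-translation-profile} to $(u_n)$, viewed as a bounded sequence in $\mathcal E(I)$ via $\mathcal X_\lambda(I)\hookrightarrow\mathcal E(I)$. This produces, on a subsequence, the profiles $U^j\in\mathcal E(I)$, the translations $y_n^j$, and the remainders $r_n^J$. Items (i) and (ii) are then exactly the conclusions of that lemma. What remains is to show three things: the profiles lie in $\mathcal X_\lambda(I)$, the localized Bessel bounds (iii) hold, and the remainders become small in Morrey norm (iv).

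For (iii), fix an interval $K\subset I$. By construction $U^j$ is the weak $\mathcal E(I)$-limit of $T_{-y_n^j}r_n^{j-1}$. Restriction to $K$ is a bounded linear map, so the corresponding weak convergence holds in $L^2(K;H^1)$ and in $L^2(K;H^{-1})$ for the time derivatives. We use the divergence $|y_n^j-y_n^k|\to\infty$ together with Lemma~\ref{lem:translation-dislocation-E}(i), applied to the restricted Hilbert spaces $L^2(K;H^{\pm1})$ (translations are unitary there too), and expand $\|u_n\|_{L^2(K;H^1)}^2$. The cross terms $\langle T_{y_n^j}U^j,T_{y_n^k}U^k\rangle_{L^2(K;H^1)}$ tend to zero. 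The cross terms $\langle T_{y_n^j}U^j,r_n^J\rangle$ also tend to zero, because $T_{-y_n^j}r_n^J\rightharpoonup0$: this is the weak-null property of the remainder at every extracted center, obtained from the construction and $T_{-y_n^j}T_{y_n^k}U^k\rightharpoonup0$. This yields
\[
\|u_n\|_{L^2(K;H^1)}^2=\sum_{j\le J}\|U^j\|_{L^2(K;H^1)}^2+\|r_n^J\|_{L^2(K;H^1)}^2+o_n(1).
\]
Dropping the remainder, taking $\liminf_n$, and letting $J\to\infty$ gives the first inequality. The same argument for the $t$-derivatives gives the second. Multiplying by $|K|^{-\lambda}$ and using $\|u_n\|_{L^2(K;H^1)}^2\le|K|^\lambda\|u_n\|_{\mathcal M^{2,\lambda}}^2$ gives the Morrey--Bessel bounds. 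In particular, each single profile satisfies $\|U^j\|_{\mathcal M^{2,\lambda}(I;H^1)}\le M_0$ and $\|U^j_t\|_{\mathcal M^{2,\lambda}(I;H^{-1})}\le M_1$, so $U^j\in\mathcal X_\lambda(I)$.

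For (iv), the main obstacle is that Theorem~\ref{thm:morrey-cocompact} needs full translation-weak nullity and a uniform $\mathcal X_\lambda$ bound, while the remainder only satisfies $\eta(r^J)\to0$, and this happens only as $J\to\infty$. I would handle this in three steps.

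\emph{Step 1: a uniform bound.} We need $\sup_{J}\limsup_n\|r_n^J\|_{\mathcal M^{2,\lambda}(I;H^1)}<\infty$, and similarly for $(r_n^J)_t$. The triangle inequality fails, because the number of profiles is unbounded. Instead we apply the localized decoupling from (iii) to each $K$ with $r_n^J$ retained: $\|r_n^J\|_{L^2(K;H^1)}^2\le\|u_n\|_{L^2(K;H^1)}^2+o_n(1)$. The obstruction is that the $o_n(1)$ is not uniform in $K$. I would first try to obtain uniformity from a time-averaging approximation as in Proposition~\ref{prop:evolution-cocompact-L2}. Failing that, I would use the cruder bound $\|r_n^J\|_{\mathcal M}\le\|u_n\|_{\mathcal M}+\|\sum_{j\le J}T_{y_n^j}U^j\|_{\mathcal M}$. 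For fixed $J$, asymptotic orthogonality of separated translates on each $K$ (here $J$ is finite and each $U^j$ is fixed) bounds the second term, up to a small error, by the Morrey--Bessel quantity $\sup_K|K|^{-\lambda}\sum_j\|U^j\|^2_{L^2(K)}\le M_0^2$. For fixed $J$ the required uniformity in $K$ is obtained by approximating the finitely many $U^j$ by spatially compactly supported functions.

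\emph{Step 2: the $L^2_tL^q_x$ estimate.} We show $\lim_J\limsup_n\|r_n^J\|_{L^2(I;L^q)}=0$ by a quantitative version of Proposition~\ref{prop:evolution-cocompact-L2}. Replace $r_n^J$ by $P_hr_n^J$, at a cost of $Ch^{(1-a)/2}$. On each cell, Lemma~\ref{lem:spatial-translation-cocompact} in its quantitative form gives
\[
\|a\|_{L^{2+4/d}}^{2+4/d}\le C\Big(\sup_y\|a\|_{L^2(B_2(y))}\Big)^{4/d}\|a\|_{H^1}^2 .
\]
The local $L^2$ supremum of the cell averages is controlled, via Rellich, by $\eta(r^J)$ as $n\to\infty$: a weak limit of translates of the cell average is the cell average of a weak limit of translates of $r^J$. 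Then interpolate to $L^q$.

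\emph{Step 3: the Morrey upgrade.} Combine Step 2 with the Sobolev bound from Step 1 and the splitting argument of Lemma~\ref{lem:morrey-downgrade}. That argument is uniform in the sense that it only needs the Morrey bound $M$ and the $L^2$ norm, which gives $\limsup_n\|r_n^J\|_{\mathcal M^{2,\mu}(I;L^q)}\le C\delta^{(\lambda-\mu)/2}+C_\delta\limsup_n\|r_n^J\|_{L^2(I;L^q)}$. Let $J\to\infty$ and then $\delta\to0$.
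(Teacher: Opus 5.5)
\paragraph{The gap: Step~1 of (iv) at the exponent $\lambda$.}
Your treatment of (i)--(iii), and your derivation of $U^j\in\mathcal X_\lambda(I)$ from the Morrey--Bessel bound, are fine and match the paper. The problem is that Step~1 asserts $\sup_J\limsup_n\|r_n^J\|_{\mathcal M^{2,\lambda}(I;H^1)}<\infty$ at the \emph{original} exponent $\lambda$, and neither mechanism you offer yields this. Time-averaging and spatially compactly supported approximation both control $L^2(I;\cdot)$-type norms, but not arbitrarily short intervals with weight $|K|^{-\lambda}$. Compactly supported functions are not dense in $\mathcal M^{2,\lambda}(I;H^1)$. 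Your crude route needs each cross term $c_n^{jk}(t)=\langle T_{y_n^j}U^j(t),T_{y_n^k}U^k(t)\rangle_{H^1}$ to vanish in $\mathcal M^{1,\lambda}(I)$, and this is false in general. For example, let $U=e^{t\Delta}f$, where the dyadic piece of $f$ is a wave packet at frequency $2^ke_1$ with $L^2$-size $2^{-k\lambda}$, spread over a ball of radius $R_k\to\infty$. Then $U\in\mathcal X_\lambda(I)$ and $\|U(t)\|_{H^1}^2\approx t^{\lambda-1}$ for small $t$. For $y_n=ne_2$, put $t_n\approx 4^{-k_n}$ with $R_{k_n}\gg n$. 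The pairing $\langle U(t),U(t,\cdot-y_n)\rangle_{H^1}$ then integrates over $(0,t_n)$ to a fixed fraction of $\int_0^{t_n}\|U\|_{H^1}^2\,dt\approx t_n^{\lambda}$, so its Morrey ratio on $(0,t_n)$ stays bounded below as $t_n\to0$. Consequently, for fixed $J$ you only get $\limsup_n\|S_n^J\|_{\mathcal M^{2,\lambda}(I;H^1)}^2\le M_0^2+\sum_{j\ne k}\|U^j\|_{\mathcal M^{2,\lambda}}\|U^k\|_{\mathcal M^{2,\lambda}}$. This is not uniform in $J$, so Step~3, where $J\to\infty$ is taken before $\delta\to0$, loses its input.

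\paragraph{The repair, and how your Step~2 compares with the paper.}
The fix is to accept a loss, which is exactly where the paper introduces its intermediate exponent: fix $\mu<\nu<\lambda$. Each $c_n^{jk}$ is bounded in $\mathcal M^{1,\lambda}(I)$ by Cauchy--Schwarz and tends to zero in $L^1(I)$ by dominated convergence. The $L^1$ version of Lemma~\ref{lem:morrey-downgrade} therefore kills it in $\mathcal M^{1,\nu}(I)$. Together with the Morrey--Bessel bound this gives $\limsup_n\|S_n^J\|_{\mathcal M^{2,\nu}(I;H^1)}\le T^{(\lambda-\nu)/2}M_0$, independently of $J$. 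Hence $r_n^J$ has a $J$-uniform $\mathcal M^{2,\nu}$ bound, and your Step~3 runs with $\delta^{(\nu-\mu)/2}$ in place of $\delta^{(\lambda-\mu)/2}$. With this change the proof is complete.

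Your Step~2 is then a genuinely different route from the paper's. The paper argues by contradiction: it builds a diagonal remainder $r_{n_m}^{J_m}$ with imposed almost-orthogonality, and shows it is translation-weakly null using $\eta(r^K)\to0$ and the square-summable profile tail. It then applies Theorem~\ref{thm:morrey-cocompact} with source $\nu$ and target $\mu$. You instead rerun Proposition~\ref{prop:evolution-cocompact-L2} quantitatively, bounding the local $L^2$ concentration of the cell averages by $|I_k|^{-1/2}\eta(r^J)$ via Rellich. That argument is correct and avoids the diagonal bookkeeping. It also gives an explicit bound $Ch^{(1-a)/2}+C_h\,\eta(r^J)^{\gamma}$ for $\limsup_n\|r_n^J\|_{L^2(I;L^q)}$. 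The paper's route, in exchange, reuses its cocompactness theorem unchanged.
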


\begin{proof}
Apply Lemma~\ref{lem:hilbert-translation-profile} in $\mathcal E(I)$.  This gives profiles $U^j\in\mathcal E(I)$, pairwise divergent translation parameters, the decomposition, the Hilbert energy identity, and remainders whose residual translation concentration tends to zero.  Because each profile is a weak limit in $\mathcal E(I)$ of spatial translates of a bounded residual sequence, lower semicontinuity on every fixed interval shows that $U^j$ belongs to $\mathcal X_\lambda(I)$.

We record the localization needed below.  Restriction from $I$ to a fixed interval $K\subset I$ is a bounded linear map on both Hilbert components of $\mathcal E(I)$.  Hence all weak convergence and residual orthogonality relations used in the extraction remain valid after restriction to $K$.  For a fixed finite number of profiles, the divergent spatial translations make distinct profiles asymptotically orthogonal in both $L^2(K;H^1)$ and $L^2(K;H^{-1})$ for the time derivatives.  The residual is weakly orthogonal to each extracted profile after undoing its translation.  Hence the usual Pythagorean expansion holds on $K$ separately in the two Hilbert components.  Dropping the nonnegative residual term and passing to the limit gives the two inequalities in \textup{(iii)}.  Taking the supremum after multiplying by $|K|^{-\lambda}$ yields the Morrey--Bessel bounds.

It remains to prove \textup{(iv)}.  Fix
\[
0\le\mu<\nu<\lambda.
\]
For a finite profile sum
\[
S_n^J:=\sum_{j=1}^J T_{y_n^j}U^j,
\]
the diagonal part of the squared $H^1$ norm is controlled by the first Morrey--Bessel bound.  Every cross term tends to zero in $L^1(I)$ as $n\to\infty$ because the relative translations diverge.  The cross terms are bounded in the scalar Morrey space $\mathcal M^{1,\lambda}(I)$ by Cauchy--Schwarz and the Morrey bounds of the fixed profiles.  Lemma~\ref{lem:morrey-downgrade}, with the obvious $L^1$ version, therefore shows that these cross terms vanish in $\mathcal M^{1,\nu}(I)$.  Thus
\[
\limsup_{n\to\infty}
\|S_n^J\|_{\mathcal M^{2,\nu}(I;H^1)}
\le C_{T,\lambda,\nu}M_0,
\]
with a constant independent of $J$.  The same argument for the time derivatives gives
\[
\limsup_{n\to\infty}
\|(S_n^J)_t\|_{\mathcal M^{2,\nu}(I;H^{-1})}
\le C_{T,\lambda,\nu}M_1.
\]
Consequently the remainders $r_n^J=u_n-S_n^J$ are bounded in $\mathcal X_\nu(I)$ uniformly with respect to $J$ after taking the outer $\limsup_n$.

Suppose now that the conclusion in \textup{(iv)} fails for the chosen $\mu$ and $q$.  Then there exist $\varepsilon>0$ and integers $J_m\to\infty$.  Using the outer $\limsup_n$ in the preceding uniform bounds, choose $n_m\to\infty$ so large that
\[
\|r_{n_m}^{J_m}\|_{\mathcal M^{2,\mu}(I;L^q)}\ge\varepsilon,
\]
the diagonal sequence $(r_{n_m}^{J_m})$ is bounded in $\mathcal X_\nu(I)$, and the first $J_m$ translated profiles are almost orthogonal in $\mathcal E(I)$ to accuracy $2^{-m}$.  More precisely, after increasing $n_m$ if necessary we may require
\[
\left|
\left\langle T_{y_{n_m}^j}U^j,T_{y_{n_m}^k}U^k\right\rangle_{\mathcal E(I)}
\right|
\le \frac{2^{-m}}{J_m^2},
\qquad 1\le j<k\le J_m.
\]

We claim that the diagonal remainder is translation-weakly null in $\mathcal E(I)$.  Let $(z_m)$ be arbitrary and suppose, after extraction, that
\[
T_{z_m}r_{n_m}^{J_m}\rightharpoonup W
\quad\text{in }\mathcal E(I).
\]
Fix $K$.  For $m$ large enough $J_m>K$, and write
\[
r_{n_m}^{J_m}=r_{n_m}^{K}-S_{m,K},
\qquad
S_{m,K}:=\sum_{j=K+1}^{J_m}T_{y_{n_m}^j}U^j.
\]
The almost-orthogonality choice gives
\[
\limsup_{m\to\infty}\|S_{m,K}\|_{\mathcal E(I)}^2
\le
\sum_{j>K}\|U^j\|_{\mathcal E(I)}^2.
\]
After a further subsequence, $T_{z_m}r_{n_m}^{K}$ has a weak limit $W_K$; by the definition of $\eta(r^K)$,
\[
\|W_K\|_{\mathcal E(I)}\le\eta(r^K).
\]
Since
\[
T_{z_m}S_{m,K}=T_{z_m}r_{n_m}^{K}-T_{z_m}r_{n_m}^{J_m},
\]
weak lower semicontinuity gives
\[
\|W_K-W\|_{\mathcal E(I)}
\le
\left(\sum_{j>K}\|U^j\|_{\mathcal E(I)}^2\right)^{1/2}.
\]
Consequently
\[
\|W\|_{\mathcal E(I)}
\le
\eta(r^K)
+
\left(\sum_{j>K}\|U^j\|_{\mathcal E(I)}^2\right)^{1/2}.
\]
Both terms tend to zero as $K\to\infty$.  Hence $W=0$.  Since $(z_m)$ was arbitrary, the diagonal sequence is translation-weakly null.

Theorem~\ref{thm:morrey-cocompact}, applied with source exponent $\nu$ and target exponent $\mu$, now gives
\[
\|r_{n_m}^{J_m}\|_{\mathcal M^{2,\mu}(I;L^q)}\longrightarrow0,
\]
contradicting the lower bound $\varepsilon$.  This proves \textup{(iv)}.
\end{proof}

There is a natural condition under which the loss in the time-Morrey exponent is unnecessary.  For a family $\mathcal F\subset L^2(I;H^1)$ define
\[
\omega_{\lambda}(\mathcal F;\delta)
:=
\sup_{u\in\mathcal F}
\sup_{\substack{K\subset I\\0<|K|\le\delta}}
|K|^{-\lambda}\int_K\|u(t)\|_{H^1}^2\,dt.
\]
We call $\mathcal F$ uniformly Morrey-absolutely continuous in $H^1$ if
\[
\omega_{\lambda}(\mathcal F;\delta)\longrightarrow0
\qquad(\delta\downarrow0).
\]
This is the uniform version of the ``little Morrey'' condition and excludes the time-bubble arrays responsible for the endpoint obstruction in the full Morrey class.

\begin{proposition}\label{prop:critical-time-cocompact}
Let $2<q<2^*$ and let $(u_n)$ be bounded in $\mathcal E(I)$ and in
$\mathcal M^{2,\lambda}(I;H^1)$.  Assume
\[
T_{y_n}u_n\rightharpoonup0
\quad\text{weakly in }\mathcal E(I)
\]
for every sequence $(y_n)\subset\mathbb R^d$, and assume that $(u_n)$ is uniformly Morrey-absolutely continuous in $H^1$.  Then
\[
\|u_n\|_{\mathcal M^{2,\lambda}(I;L^q)}\longrightarrow0.
\]
\end{proposition}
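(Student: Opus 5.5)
We follow the proof of Lemma~\ref{lem:morrey-downgrade}, splitting the Morrey supremum into short and long intervals. The uniform little-Morrey condition controls the short intervals, and Proposition~\ref{prop:evolution-cocompact-L2} controls the long ones.

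First, by Proposition~\ref{prop:evolution-cocompact-L2}, the translation-weak nullity in $\mathcal E(I)$ gives
\[
\|u_n\|_{L^2(I;L^q)}\longrightarrow0.
\]
Second, since $2<q<2^*$, the Sobolev embedding $H^1(\mathbb R^d)\hookrightarrow L^q(\mathbb R^d)$ gives $\|u_n(t)\|_{L^q}\le C_S\|u_n(t)\|_{H^1}$ for a.e.\ $t$. Hence, for every interval $K\subset I$ with $|K|\le\delta$,
\[
|K|^{-\lambda}\int_K\|u_n(t)\|_{L^q}^2\,dt
\le C_S^2\,|K|^{-\lambda}\int_K\|u_n(t)\|_{H^1}^2\,dt
\le C_S^2\,\omega_\lambda(\{u_n\};\delta).
\]
This bound is uniform in $n$, and it tends to zero as $\delta\downarrow0$ by the uniform Morrey-absolute continuity hypothesis.

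Third, for intervals with $|K|>\delta$ we use the crude bound
\[
|K|^{-\lambda}\int_K\|u_n\|_{L^q}^2\,dt\le\delta^{-\lambda}\|u_n\|_{L^2(I;L^q)}^2.
\]

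To conclude, given $\varepsilon>0$ we first fix $\delta$ so that $C_S^2\omega_\lambda(\{u_n\};\delta)<\varepsilon^2$. We then choose $N$ so that $\delta^{-\lambda}\|u_n\|_{L^2(I;L^q)}^2<\varepsilon^2$ for all $n\ge N$. Taking the supremum over all $K\subset I$ gives $\|u_n\|_{\mathcal M^{2,\lambda}(I;L^q)}\le\varepsilon$ for $n\ge N$.

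There is no real obstacle in this argument. The only points to check are that the supremum in $\omega_\lambda$ is taken over the whole family $\{u_n\}$, so that $\delta$ can be chosen independently of $n$, and that the bound in $\mathcal M^{2,\lambda}(I;H^1)$, together with the bound in $\mathcal E(I)$, is what Proposition~\ref{prop:evolution-cocompact-L2} requires. The substantive analytic input, namely the time-averaging cocompactness, is already contained in Proposition~\ref{prop:evolution-cocompact-L2}. The little-Morrey hypothesis simply removes the term $\delta^{\lambda-\mu}$ that forced $\mu<\lambda$ in Lemma~\ref{lem:morrey-downgrade}.
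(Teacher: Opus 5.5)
Your proof is correct and follows the same approach as the paper. You obtain $\|u_n\|_{L^2(I;L^q)}\to0$ from Proposition~\ref{prop:evolution-cocompact-L2}, bound intervals with $|K|\le\delta$ by Sobolev embedding and $\omega_\lambda(\{u_n\};\delta)$, and bound intervals with $|K|>\delta$ by $\delta^{-\lambda}\|u_n\|_{L^2(I;L^q)}^2$, fixing $\delta$ before letting $n\to\infty$.
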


\begin{proof}
Proposition~\ref{prop:evolution-cocompact-L2} gives
\[
\|u_n\|_{L^2(I;L^q)}\to0.
\]
For intervals $K$ with $|K|\le\delta$, Sobolev embedding gives
\[
|K|^{-\lambda}\int_K\|u_n(t)\|_{L^q}^2\,dt
\le C\omega_{\lambda}(\{u_n:n\ge1\};\delta).
\]
For $|K|>\delta$,
\[
|K|^{-\lambda}\int_K\|u_n(t)\|_{L^q}^2\,dt
\le\delta^{-\lambda}\|u_n\|_{L^2(I;L^q)}^2.
\]
First let $\delta\downarrow0$ and then $n\to\infty$.
\end{proof}

\begin{corollary}\label{cor:little-morrey-profile-endpoint}
In Theorem~\ref{thm:morrey-translation-profiles}, assume in addition that the original sequence $(u_n)$ is uniformly Morrey-absolutely continuous in $H^1$.  Then for every $2<q<2^*$,
\[
\boxed{
\lim_{J\to\infty}\limsup_{n\to\infty}
\|r_n^J\|_{\mathcal M^{2,\lambda}(I;L^q)}=0.
}
\]
Thus uniform little-Morrey control removes the loss $\mu<\lambda$ from the profile remainder; the only remaining critical obstruction is the spatial endpoint $q=2^*$.
\end{corollary}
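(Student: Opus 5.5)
The plan is to repeat the contradiction argument of part (iv) of Theorem~\ref{thm:morrey-translation-profiles}, with Theorem~\ref{thm:morrey-cocompact} replaced by Proposition~\ref{prop:critical-time-cocompact}. For this we must show that the diagonal remainders $r_{n_m}^{J_m}$ are bounded in $\mathcal E(I)$ and in $\mathcal M^{2,\lambda}(I;H^1)$, that they are uniformly Morrey-absolutely continuous in $H^1$, and that they are translation-weakly null. Translation-weak nullity was proved in part (iv) without using the Morrey exponent. Boundedness in $\mathcal E(I)$ follows from the energy decoupling (ii). What is new is the uniform little-Morrey control of the remainders at the full exponent $\lambda$; the bound is needed only for $n\ge n_0(J)$, which the diagonal choice permits.

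The key step is to transfer the modulus $\omega_\lambda(\{u_n\};\delta)$ to the profiles and then to the remainders. First, for every interval $K\subset I$, the localized Bessel inequality (iii) gives
\[
\sum_{j\ge1}\|U^j\|_{L^2(K;H^1)}^2\le\liminf_n\|u_n\|_{L^2(K;H^1)}^2\le \omega(\delta)\,|K|^\lambda
\qquad (|K|\le\delta),
\]
where $\omega(\delta):=\omega_\lambda(\{u_n\};\delta)$. Next, for the finite profile sum $S_n^J$ on an interval $K$, expand $\|S_n^J(t)\|_{H^1}^2$ into diagonal terms and cross terms. The diagonal terms are bounded by $\omega(\delta)|K|^\lambda$. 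The cross terms $c_n(t)=\langle T_{y_n^j}U^j(t),T_{y_n^k}U^k(t)\rangle_{H^1}$ tend to zero in $L^1(I)$, and each fixed profile is Morrey-absolutely continuous: $U^j\in\mathcal M^{2,\lambda}$ satisfies the Bessel bound above, so $\int_K\|U^j\|_{H^1}^2\le\omega(\delta)|K|^\lambda$. The cross terms are therefore also bounded by $\omega(\delta)|K|^\lambda$ up to a factor. This factor depends on $J$, and this is the main obstacle. To remove it, I would avoid summing cross terms pointwise. Instead I would use that, for $n$ large (depending on $J$ and $\delta$ through a finite grid of intervals of length comparable to $\delta$), the asymptotic orthogonality of divergent translates on each grid interval makes $\int_K\|S_n^J\|_{H^1}^2\le \sum_j\int_{K'}\|U^j\|_{H^1}^2+\delta^{\lambda}\epsilon$. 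Here $K'$ is the union of at most two grid cells containing $K$, which gives the bound $\le C\,\omega(2\delta)|K|^\lambda$ when $|K|\ge\delta/2$. Short intervals with $|K|<\delta/2$ are handled by choosing the grid at a finer scale $\delta'$, taking a countable dyadic family of scales and letting $n_0$ grow diagonally. Then $\|r_n^J\|^2\le 2\|u_n\|^2+2\|S_n^J\|^2$ on each $K$ yields, for $n\ge n_0(J,\delta)$,
\[
|K|^{-\lambda}\int_K\|r_n^J\|_{H^1}^2\le C\,\omega(C\delta)+o(1),
\]
uniformly in $J$. The same computation with $\delta=T$ gives the uniform bound in $\mathcal M^{2,\lambda}(I;H^1)$.

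With these bounds in hand, suppose the conclusion fails. Then, as in part (iv), we obtain $\varepsilon>0$, $J_m\to\infty$ and $n_m$ chosen large enough that the above estimates hold at the scales $\delta=2^{-k}$, $k\le m$, with error $2^{-m}$, and that $\|r_{n_m}^{J_m}\|_{\mathcal M^{2,\lambda}(I;L^q)}\ge\varepsilon$. The diagonal sequence is then uniformly Morrey-absolutely continuous in $H^1$, bounded, and translation-weakly null. Proposition~\ref{prop:critical-time-cocompact} forces its $\mathcal M^{2,\lambda}(I;L^q)$ norm to zero, which is a contradiction.
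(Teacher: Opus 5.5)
Your overall architecture matches the paper's: a diagonal contradiction argument, translation-weak nullity inherited from part (iv), and Proposition~\ref{prop:critical-time-cocompact} replacing Theorem~\ref{thm:morrey-cocompact}. However, the multi-scale bookkeeping leaves a hole, so uniform Morrey-absolute continuity of the diagonal sequence is not established as written.

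\textbf{The gap.} Your grid estimate at scale $\delta'$ needs $n\ge n_0(J,\delta')$. So for any fixed $n$ it covers only finitely many dyadic scales. With your choice (scales $2^{-k}$, $k\le m$), nothing controls $|K|^{-\lambda}\int_K\|r_{n_m}^{J_m}\|_{H^1}^2$ when $|K|<2^{-m-1}$. A Morrey quotient on a short interval is not dominated by the quotients on longer intervals containing it.

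The only estimate available at those scales is the $J$-dependent one you mentioned,
$\int_K\|S_n^J\|_{H^1}^2\le J\sum_{j\le J}\int_K\|U^j\|_{H^1}^2\le J\,\omega(|K|)\,|K|^\lambda$.
The $J_m$ are dictated by the failure of the conclusion, so you may thin them out but not slow them down. Hence $\sup_m J_m\,\omega(2^{-m-1})$ need not be small.

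\textbf{The repair.} Tie the number of scales to $J_m$:
\begin{itemize}
\item[(a)] first choose $k_m$ with $2(1+J_m)\,\omega(2^{-k_m})\le 1/m$;
\item[(b)] use the crude bound $|K|^{-\lambda}\int_K\|r_n^{J_m}\|_{H^1}^2\le 2(1+J_m)\,\omega(|K|)$ for $|K|\le 2^{-k_m}$;
\item[(c)] only then choose $n_m$ to serve the finitely many grid scales $k\le k_m$, together with the other diagonal requirements.
\end{itemize}
The finitely many early terms are each individually little-Morrey. With this patch your argument goes through.

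\textbf{Comparison with the paper.} The paper needs neither grids nor scale-by-scale diagonalization. It uses the exact identity
$\|r_n^J\|_{H^1}^2=\|u_n\|_{H^1}^2-\|S_n^J\|_{H^1}^2-2\operatorname{Re}\langle S_n^J,r_n^J\rangle_{H^1}$
and discards $-\|S_n^J\|_{H^1}^2$. For fixed $J$, the scalar cross term $\langle S_n^J,r_n^J\rangle_{H^1}$ is little-Morrey with a $J$-dependent modulus and tends to $0$ in $L^1(I)$. The $\delta$-splitting of Lemma~\ref{lem:morrey-downgrade} then makes it vanish in $\mathcal M^{1,\lambda}(I)$ on all scales at once.

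The $J$-dependence is harmless precisely because, for each fixed $J$, the quantity tends to zero. This observation disposes of the ``main obstacle'' you identified, and it applies equally to your off-diagonal terms $c_n^{jk}$; it would have closed your argument without grids.

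What your grid route buys is modest. It needs only signed integrals $\int_G c_n^{jk}\,dt\to0$ over fixed cells, i.e.\ plain weak orthogonality in $L^2(G;H^1)$. The paper's step needs $\int_I|\langle S_n^J,r_n^J\rangle_{H^1}|\,dt\to0$, which requires a short extra approximation argument (smooth approximants of the profiles plus pointwise-in-time weak convergence). The price of your route is the bookkeeping above and a weaker constant, $C\omega(C\delta)$ instead of the paper's $\omega(\delta)$.
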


\begin{proof}
Each profile inherits the same small-interval estimate by weak lower semicontinuity.  Fix a finite number $J$ of profiles and write
\[
u_n=S_n^J+r_n^J.
\]
The spatial divergence of the profile parameters and the weak orthogonality of the residual imply
\[
\int_I\bigl|\langle S_n^J(t),r_n^J(t)\rangle_{H^1}\bigr|\,dt\to0.
\]
For fixed $J$, both $S_n^J$ and $r_n^J$ are uniformly Morrey-absolutely continuous in $H^1$: this follows from the corresponding property of the original sequence and of the finitely many profiles, together with the triangle inequality.  Hence the scalar cross term is uniformly little-Morrey in $\mathcal M^{1,\lambda}(I)$.  Splitting intervals into $|K|\le\delta$ and $|K|>\delta$, exactly as in Proposition~\ref{prop:critical-time-cocompact}, upgrades its $L^1$ convergence to
\[
\sup_{K\subset I}|K|^{-\lambda}
\int_K\bigl|\langle S_n^J,r_n^J\rangle_{H^1}\bigr|\,dt\to0.
\]
Using
\[
\|r_n^J\|_{H^1}^2
=\|u_n\|_{H^1}^2-\|S_n^J\|_{H^1}^2
-2\operatorname{Re}\langle S_n^J,r_n^J\rangle_{H^1},
\]
we therefore obtain, for every $\delta>0$,
\[
\limsup_{n\to\infty}
\sup_{\substack{K\subset I\\|K|\le\delta}}
|K|^{-\lambda}\int_K\|r_n^J(t)\|_{H^1}^2\,dt
\le
\omega_{\lambda}(\{u_n:n\ge1\};\delta),
\]
and also a uniform bound of the remainders in $\mathcal M^{2,\lambda}(I;H^1)$, with constants independent of $J$ after the outer $\limsup_n$.

If the asserted remainder convergence failed, choose $J_m\to\infty$.  For each $m$, use the fixed-$J_m$ convergence of the cross term above and the outer $\limsup_n$ in the target norm to choose $n_m$ so large that
\[
\|r_{n_m}^{J_m}\|_{\mathcal M^{2,\lambda}(I;L^q)}\ge\varepsilon
\]
for one fixed $\varepsilon>0$, while also
\[
\sup_{K\subset I}|K|^{-\lambda}
\int_K\bigl|\langle S_{n_m}^{J_m}(t),r_{n_m}^{J_m}(t)\rangle_{H^1}\bigr|\,dt
\le m^{-1}.
\]
After increasing $n_m$ once more if necessary, we also impose
\[
\left|\left\langle T_{y_{n_m}^j}U^j,T_{y_{n_m}^k}U^k\right\rangle_{\mathcal E(I)}\right|
\le \frac{2^{-m}}{J_m^2}
\qquad(1\le j<k\le J_m),
\]
which is possible because the translated profiles are pairwise asymptotically orthogonal.
The identity above then implies
\[
\sup_{\substack{K\subset I\\|K|\le\delta}}
|K|^{-\lambda}
\int_K\|r_{n_m}^{J_m}(t)\|_{H^1}^2\,dt
\le
\omega_{\lambda}(\{u_n:n\ge1\};\delta)+2m^{-1}.
\]
Hence the diagonal remainder sequence is uniformly Morrey-absolutely continuous in $H^1$.  The quantitative diagonal argument in the proof of Theorem~\ref{thm:morrey-translation-profiles}, using the square-summable profile tail and $\eta(r^K)\to0$, also shows that it is translation-weakly null in $\mathcal E(I)$.  Proposition~\ref{prop:critical-time-cocompact} therefore forces
\[
\|r_{n_m}^{J_m}\|_{\mathcal M^{2,\lambda}(I;L^q)}\longrightarrow0,
\]
contradicting the fixed lower bound.
\end{proof}

The profile theorem yields a profile-count concentration--compactness trichotomy in the subcritical Morrey topology.  Unlike the critical Morrey norm, the subcritical target cannot hide arbitrarily many order-one time concentrations.

\begin{corollary}\label{cor:morrey-profile-trichotomy}
Under the hypotheses of Theorem~\ref{thm:morrey-translation-profiles}, exactly one of the following alternatives occurs after passage to a subsequence.

\begin{enumerate}[label=\textup{(\roman*)}]
\item There is no nonzero translation profile.  Then, for every $0\le\mu<\lambda$ and $2<q<2^*$,
\[
 u_n\longrightarrow0
 \quad\text{in }\mathcal M^{2,\mu}(I;L^q).
\]
\item There is exactly one nonzero profile $U$.  Then for a suitable sequence of translations $y_n$,
\[
T_{-y_n}u_n\longrightarrow U
\quad\text{in }\mathcal M^{2,\mu}(I;L^q)
\]
for every $\mu<\lambda$ and $2<q<2^*$.
\item There are at least two nonzero profiles.  Their translation parameters diverge pairwise, their Hilbert evolution energies decouple, and the sequence splits into two or more asymptotically separated concentration components, up to a remainder vanishing in every $\mathcal M^{2,\mu}(I;L^q)$ with $\mu<\lambda$ and $q<2^*$.
\end{enumerate}
\end{corollary}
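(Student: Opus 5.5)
The plan is to read the trichotomy directly off the profile decomposition of Theorem~\ref{thm:morrey-translation-profiles}, after discarding zero profiles. Pass to the subsequence furnished by that theorem and keep only the indices $j$ with $U^j\neq0$. The greedy extraction in Lemma~\ref{lem:hilbert-translation-profile} stops producing profiles exactly when $\eta(r^J)=0$, so zero profiles can be dropped without affecting the remainders. The three alternatives are then separated by the number $N\in\{0,1,2,\dots,\infty\}$ of nonzero profiles. They are mutually exclusive by definition, which gives the ``exactly one'' assertion.

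\emph{Case (i), $N=0$.} Here $r_n^J=u_n$ for every $J$. Part (iv) of Theorem~\ref{thm:morrey-translation-profiles} gives $\limsup_n\|u_n\|_{\mathcal M^{2,\mu}(I;L^q)}=0$, since the quantity no longer depends on $J$. Alternatively, $\eta(u)=0$ means that $u_n$ is translation-weakly null in $\mathcal E(I)$, and Theorem~\ref{thm:morrey-cocompact} applies directly.

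\emph{Case (ii), $N=1$.} Write $u_n=T_{y_n}U+r_n^1$. Since there are no further profiles, $r_n^J=r_n^1$ for all $J\ge1$, and part (iv) yields $\|r_n^1\|_{\mathcal M^{2,\mu}(I;L^q)}\to0$. Spatial translations are isometries of $\mathcal M^{2,\mu}(I;L^q)$, so
\[
\|T_{-y_n}u_n-U\|_{\mathcal M^{2,\mu}(I;L^q)}=\|r_n^1\|_{\mathcal M^{2,\mu}(I;L^q)}\to0.
\]
The remaining point is that $U\in\mathcal M^{2,\mu}(I;L^q)$. This follows from $U\in\mathcal X_\lambda(I)$, the Sobolev embedding $H^1\hookrightarrow L^q$, and Proposition~\ref{prop:morrey-exponent-embedding}.

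\emph{Case (iii), $N\ge2$.} Items (i) and (ii) of Theorem~\ref{thm:morrey-translation-profiles} give the pairwise divergence of the translation parameters and the decoupling of the energies. Part (iv) gives a remainder that vanishes in $\mathcal M^{2,\mu}(I;L^q)$ in the iterated limit $\lim_J\limsup_n$. When $N<\infty$ this is a genuine limit in $n$, since $r_n^J=r_n^N$ for $J\ge N$. When $N=\infty$, the statement is understood in this iterated sense.

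To make ``asymptotically separated components'' concrete, I would note that for fixed $J$ the translated profiles $T_{y_n^j}U^j$ have asymptotically disjoint essential spatial support. Concretely, approximate each $U^j$ in $\mathcal X_\lambda(I)$ by functions compactly supported in $x$, and use $|y_n^j-y_n^k|\to\infty$. It follows that the $L^q$ norm of the finite sum asymptotically splits, and each component carries a nonzero amount of mass.

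The main obstacle is interpreting (iii) precisely when infinitely many profiles occur: the remainder smallness is only the double limit, not a single-$n$ statement. I would address it by stating (iii) with the quantifier order inherited from part (iv). If a single-sequence version is desired, I would use the diagonal choice $J=J_n$ as in the proof of Theorem~\ref{thm:morrey-translation-profiles}.
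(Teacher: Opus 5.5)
Your proof follows the paper's: the alternatives are sorted by the number of nonzero profiles in Theorem~\ref{thm:morrey-translation-profiles}, and (i)--(iii) are read off from items (i), (ii), (iv) there. The only difference is bookkeeping. The paper applies Theorem~\ref{thm:morrey-cocompact} directly to $u_n$, resp.\ to the translated remainder $T_{-y_n}u_n-U$; this is bounded in $\mathcal X_\lambda(I)$ and translation-weakly null because $\eta=0$. You instead use item (iv) and observe that $r_n^J$ no longer depends on $J$ once only zero profiles remain. The two routes are equivalent.

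One correction concerns your elaboration of (iii). Spatially compactly supported functions are not dense in $\mathcal X_\lambda(I)$, because the Morrey norm is not absolutely continuous. For example, take $a_k(t)=2^{k(1-\lambda)/2}\beta(2^kt)$ with $\beta\in C_c^\infty((1,2))$, and $\psi_k=2^{-k/2}\phi(x)\cos(2^{k/2}x_1)$, so that $\|\psi_k\|_{H^1}\approx1$ and $\|\psi_k\|_{H^{-1}}\approx2^{-k}$ for large $k$. Then $U=\sum_k a_k(t)\psi_k(x-3ke_1)$ lies in $\mathcal X_\lambda(I)$. However, testing on the intervals $(0,2^{1-K})$ with $3K>R+1$ shows that every $V$ supported in $B_R$ satisfies $\|U-V\|_{\mathcal M^{2,\lambda}(I;H^1)}\gtrsim1$.

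The fix is to do the approximation in $\mathcal E(I)$, or in $\mathcal M^{2,\mu}(I;H^1)$ with $\mu<\lambda$. The tails $(1-\chi_R)U^j$ are bounded in $\mathcal M^{2,\lambda}(I;H^1)$ and tend to zero in $L^2(I;H^1)$, so Lemma~\ref{lem:morrey-downgrade} applies. Only $\mu<\lambda$ enters (iii), so this suffices. The step is in any case beyond what the paper does: its proof of (iii) only cites the pairwise divergence of the parameters and the energy decoupling.
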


\begin{proof}
The alternatives are determined by the number of nonzero profiles in Theorem~\ref{thm:morrey-translation-profiles}.  If there are none, the whole sequence is the profile-free remainder and Theorem~\ref{thm:morrey-cocompact} gives \textup{(i)}.  If there is exactly one, translate that profile to the origin; the remaining sequence has no nonzero translation profile, so the same cocompactness theorem gives \textup{(ii)}.  If there are at least two, pairwise divergence of the translation parameters and the Hilbert energy decomposition give \textup{(iii)}.
\end{proof}

\begin{remark}\label{rem:critical-profile-boundary}
The losses $\mu<\lambda$ and $q<2^*$ are structural, not artifacts of the proof.  At $q=2^*$ spatial dilations re-enter the defect group.  At $\mu=\lambda$ the endpoint supremum over time intervals permits uniformly bounded arrays of arbitrarily many time-dislocated order-one bubbles.  Thus a universal critical remainder theorem in
$\mathcal M^{2,\lambda}_tL_x^{2^*}$ cannot hold for arbitrary bounded sequences.  The next section shows that parabolic dynamics supply exactly the additional structure needed to recover a fully critical theorem on a natural Hilbert trace subclass.
\end{remark}

\section{Critical parabolic profiles for heat and Stokes flows}\label{sec:critical-heat-profiles}

The preceding profile theorem deliberately stops below the spatial critical exponent for arbitrary evolution sequences.  At the critical endpoint, independent spatial concentration, temporal concentration, and temporal modulation are all possible in the unconstrained class.  A parabolic equation removes these independent defects.  In this section we show that, for the heat equation and hence for the whole-space Stokes system, the critical Morrey profile problem reduces exactly to the classical Hilbert profile decomposition of the initial trace.

Throughout this section let $d\ge3$, let $0<\lambda<1$, and define
\[
p_\lambda:=\frac{2d}{d-2\lambda},
\qquad
2^*:=\frac{2d}{d-2}.
\]
The exponent $p_\lambda$ is the critical Sobolev exponent for $\dot H^\lambda(\mathbb R^d)$.  Notice also that for the Hilbert couple
\[
\dot H^{-1}(\mathbb R^d),\qquad \dot H^1(\mathbb R^d),
\]
the exact Morrey trace parameter for $p=2$ is
\[
\theta=1-\frac{1-\lambda}{2}=\frac{1+\lambda}{2},
\]
which corresponds to spatial smoothness $-1+2\theta=\lambda$.  Thus $\dot H^\lambda$ is the natural Hilbert refinement of the weak trace space at this critical scale.

We first record two scale-invariant heat estimates.

\begin{lemma}\label{lem:heat-morrey-extension}
Let $f\in\dot H^\lambda(\mathbb R^d)$ and set
\[
u(t)=e^{t\Delta}f,\qquad t>0.
\]
Then
\[
\|u\|_{\mathcal M^{2,\lambda}((0,\infty);\dot H^1)}
+\|u_t\|_{\mathcal M^{2,\lambda}((0,\infty);\dot H^{-1})}
\le C_{d,\lambda}\|f\|_{\dot H^\lambda}.
\]
Moreover the two integrands agree pointwise in time:
\[
\|u_t(t)\|_{\dot H^{-1}}=\|u(t)\|_{\dot H^1}.
\]
\end{lemma}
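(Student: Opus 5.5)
The plan is to work entirely on the Fourier side, since $e^{t\Delta}$ is the multiplier $e^{-t|\xi|^2}$ and all norms involved are homogeneous $L^2$-weighted norms.

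\emph{Step 1: the pointwise identity.} Since $u_t=\Delta u$, we have $\widehat{u_t}(t,\xi)=-|\xi|^2e^{-t|\xi|^2}\hat f(\xi)$. Hence
\[
\|u_t(t)\|_{\dot H^{-1}}^2=\int|\xi|^{-2}|\xi|^4e^{-2t|\xi|^2}|\hat f|^2\,d\xi=\int|\xi|^2e^{-2t|\xi|^2}|\hat f|^2\,d\xi=\|u(t)\|_{\dot H^1}^2 .
\]
So only the first Morrey norm needs to be estimated.

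\emph{Step 2: the Morrey bound for an interval.} Take an interval $J=(a,a+\ell)\subset(0,\infty)$. By Tonelli,
\[
\int_J\|u(t)\|_{\dot H^1}^2\,dt=\int|\xi|^2|\hat f(\xi)|^2\left(\int_a^{a+\ell}e^{-2t|\xi|^2}\,dt\right)d\xi .
\]
It therefore suffices to prove the scalar multiplier bound
\[
m_J(\xi):=|\xi|^{2-2\lambda}\int_a^{a+\ell}e^{-2t|\xi|^2}\,dt\le C_\lambda\,\ell^{\lambda}
\qquad\text{uniformly in }a\ge0,\ \xi\neq0 .
\]
Multiplying by $|\xi|^{2\lambda}|\hat f|^2$ and integrating then gives
\[
|J|^{-\lambda}\int_J\|u\|_{\dot H^1}^2\le C\|f\|_{\dot H^\lambda}^2 .
\]

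\emph{Step 3: proving the multiplier bound.} This is the only real step, and it is elementary. Since the integrand is decreasing in $t$, the worst case is $a=0$. Write $s=|\xi|^2$ and use two bounds for the time integral:
\[
\int_0^\ell e^{-2ts}\,dt\le\min\{\ell,(2s)^{-1}\}.
\]
This gives
\[
m_J\le s^{1-\lambda}\min\{\ell,(2s)^{-1}\}.
\]
Consider the two regimes:
\begin{itemize}
\item if $s\le 1/\ell$, then $m_J\le s^{1-\lambda}\ell\le \ell^{\lambda}$;
\item if $s>1/\ell$, then $m_J\le \tfrac12 s^{-\lambda}\le \tfrac12\ell^{\lambda}$.
\end{itemize}
Here $\lambda\in(0,1)$ is used so that both exponents $1-\lambda$ and $-\lambda$ have the correct signs.

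Taking the supremum over $J$ yields the claim with $C_{d,\lambda}$ depending only on $\lambda$; in fact $C^2\le 2$ works. The derivative term then follows from Step 1.

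I expect no genuine obstacle. The one point requiring care is the reduction to $a=0$, equivalently the uniformity in the left endpoint, which follows from monotonicity of $e^{-2ts}$ in $t$. One should also note that the Morrey supremum over the unbounded interval $(0,\infty)$ is still taken over bounded subintervals $J$ with $\ell<\infty$, which is exactly what the multiplier bound handles.
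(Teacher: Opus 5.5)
Your proof is correct and follows essentially the paper's route: Plancherel gives the pointwise identity, and Tonelli reduces the Morrey bound to the scalar estimate $|\xi|^2\int_J e^{-2t|\xi|^2}\,dt\lesssim\min\{1,|J|\,|\xi|^2\}\le(|J|\,|\xi|^2)^\lambda$. The only difference is cosmetic: the paper evaluates the time integral exactly as $\tfrac12(e^{-2a|\xi|^2}-e^{-2b|\xi|^2})$ and then uses $\min\{1,s\}\le s^\lambda$, whereas you first reduce to $a=0$ by monotonicity and then check the two regimes $s\le 1/\ell$ and $s>1/\ell$ by hand.
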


\begin{proof}
By Plancherel,
\[
\|u_t(t)\|_{\dot H^{-1}}^2
=\int_{\mathbb R^d}|\xi|^{-2}|\xi|^4e^{-2t|\xi|^2}|\widehat f(\xi)|^2\,d\xi
=\|u(t)\|_{\dot H^1}^2.
\]
Let $J=(a,b)\subset(0,\infty)$ and write $h=b-a$.  Then
\begin{align*}
\int_a^b\|u(t)\|_{\dot H^1}^2\,dt
&=\frac12\int_{\mathbb R^d}
|\widehat f(\xi)|^2
\bigl(e^{-2a|\xi|^2}-e^{-2b|\xi|^2}\bigr)\,d\xi\\
&\le C\int_{\mathbb R^d}|\widehat f(\xi)|^2
\min\{1,h|\xi|^2\}\,d\xi.
\end{align*}
Since $0<\lambda<1$,
\[
\min\{1,s\}\le s^\lambda,
\qquad s\ge0.
\]
Hence
\[
\int_a^b\|u(t)\|_{\dot H^1}^2\,dt
\le C h^\lambda\|f\|_{\dot H^\lambda}^2.
\]
Taking $h^{-\lambda/2}$ times the square root and then the supremum over $J$ proves both Morrey estimates.
\end{proof}

\begin{lemma}\label{lem:heat-critical-smoothing}
For every $f\in L^{p_\lambda}(\mathbb R^d)$,
\[
\boxed{
\|e^{t\Delta}f\|_{\mathcal M^{2,\lambda}((0,\infty);L^{2^*})}
\le C_{d,\lambda}\|f\|_{L^{p_\lambda}}.
}
\]
\end{lemma}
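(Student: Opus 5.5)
The plan is a direct pointwise-in-time estimate followed by an elementary scalar Morrey computation, with no use of the profile machinery. The scalar step is the same one behind the statement in Remark~\ref{rem:trace-estimate-direct} that the power $t^{-(1-\lambda)/p}$ lies in $\mathcal M^{p,\lambda}(0,T)$.

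First I will record the standard $L^{r}\to L^{s}$ smoothing estimate for the heat semigroup. It comes from Young's inequality applied to the Gaussian kernel, whose $L^m$ norm scales like $t^{-\frac d2(1-1/m)}$. It reads
\[
\|e^{t\Delta}f\|_{L^{s}}\le C t^{-\frac d2\left(\frac1r-\frac1s\right)}\|f\|_{L^{r}},
\qquad 1\le r\le s\le\infty .
\]
I will apply it with $r=p_\lambda$ and $s=2^*$. Since $0<\lambda<1$ we have $2<p_\lambda<2^*$, so the estimate applies. A one-line computation gives
\[
\frac1{p_\lambda}-\frac1{2^*}=\frac{d-2\lambda}{2d}-\frac{d-2}{2d}=\frac{1-\lambda}{d},
\]
and hence, for all $t>0$,
\[
\|e^{t\Delta}f\|_{L^{2^*}}^2\le C\,t^{-(1-\lambda)}\|f\|_{L^{p_\lambda}}^2 .
\]

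Second, I will take an arbitrary interval $J=(a,a+h)\subset(0,\infty)$ and integrate this bound over $J$. The function $t\mapsto t^{\lambda-1}$ is decreasing, so among all intervals of length $h$ its integral is largest on $(0,h)$. This gives
\[
\int_a^{a+h}t^{\lambda-1}\,dt\le\int_0^h t^{\lambda-1}\,dt=\frac{h^\lambda}{\lambda}.
\]
Multiplying by $h^{-\lambda}$ and taking the supremum over $J$ then yields the boxed estimate, with constant $C_{d,\lambda}=(C/\lambda)^{1/2}$. Here $\lambda>0$ is used essentially.

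I do not expect a real obstacle here. The only point to check is that the exponents match exactly, so that the time singularity $t^{-(1-\lambda)}$ is precisely the critical Morrey weight: it is integrable near $0$, with the scale-invariant bound $h^{\lambda}$ on every interval. This matching is what makes the estimate hold on $(0,\infty)$ with a constant independent of any time horizon, as the scaling $f\mapsto f(\kappa\,\cdot)$ predicts.
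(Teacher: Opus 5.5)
Your proposal is correct and follows the paper's proof essentially verbatim: the same $L^{p_\lambda}\to L^{2^*}$ heat smoothing bound with decay $t^{-(1-\lambda)/2}$, followed by the scalar bound $\int_a^{a+h}t^{\lambda-1}\,dt\le h^\lambda/\lambda$. The only cosmetic difference is in justifying that last inequality: you use monotonicity of $t^{\lambda-1}$, while the paper writes the integral as $(b^\lambda-a^\lambda)/\lambda$ and invokes subadditivity of $s\mapsto s^\lambda$; the two arguments are equivalent.
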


\begin{proof}
The heat kernel estimate gives
\[
\|e^{t\Delta}f\|_{L^{2^*}}
\le C t^{-\frac d2(1/p_\lambda-1/2^*)}\|f\|_{L^{p_\lambda}}
=Ct^{-(1-\lambda)/2}\|f\|_{L^{p_\lambda}}.
\]
Let $J=(a,b)$ and $h=b-a$.  Then
\[
\int_a^b t^{-(1-\lambda)}\,dt
=\frac{b^\lambda-a^\lambda}{\lambda}
\le \frac{h^\lambda}{\lambda},
\]
because $s\mapsto s^\lambda$ is concave and subadditive on $[0,\infty)$.  Therefore
\[
h^{-\lambda}\int_J\|e^{t\Delta}f\|_{L^{2^*}}^2\,dt
\le C_{d,\lambda}\|f\|_{L^{p_\lambda}}^2.
\]
Taking the supremum over $J$ proves the claim.
\end{proof}

The preceding lemma is the bridge from the classical Sobolev profile decomposition to the fully critical Morrey evolution target.  We use the profile theorem of G\'erard~\cite{Gerard1998}; see also the fractional Sobolev treatment of Palatucci--Pisante~\cite{PalatucciPisante2014}.

For $h>0$ and $y\in\mathbb R^d$ define the critical spatial dislocation
\[
(g_{h,y}\phi)(x)
:=h^{-d/2+\lambda}\phi\!\left(\frac{x-y}{h}\right).
\]
This acts isometrically on $\dot H^\lambda$ and on $L^{p_\lambda}$.  Its parabolic lift is
\[
(\mathcal G_{h,y}U)(t,x)
:=h^{-d/2+\lambda}
U\!\left(\frac{t}{h^2},\frac{x-y}{h}\right).
\]
The heat equation intertwines these two actions:
\[
e^{t\Delta}(g_{h,y}\phi)
=\mathcal G_{h,y}(e^{t\Delta}\phi).
\]

\begin{theorem}\label{thm:critical-heat-profile}
Let $(f_n)$ be bounded in $\dot H^\lambda(\mathbb R^d)$ and set
\[
u_n(t)=e^{t\Delta}f_n.
\]
After passing to a subsequence, there exist profiles
\[
\phi^j\in\dot H^\lambda(\mathbb R^d),\qquad j\ge1,
\]
scales $h_n^j>0$, centers $y_n^j\in\mathbb R^d$, and remainders $r_n^J\in\dot H^\lambda$ such that, for every finite $J$,
\[
f_n
=\sum_{j=1}^J g_{h_n^j,y_n^j}\phi^j+r_n^J.
\]
The parameters are asymptotically orthogonal: for $j\ne k$,
\[
\frac{h_n^j}{h_n^k}
+\frac{h_n^k}{h_n^j}
+\frac{|y_n^j-y_n^k|}{h_n^j}
\longrightarrow\infty.
\]
For every fixed $J$,
\[
\|f_n\|_{\dot H^\lambda}^2
=\sum_{j=1}^J\|\phi^j\|_{\dot H^\lambda}^2
+\|r_n^J\|_{\dot H^\lambda}^2+o_n(1),
\]
and
\[
\lim_{J\to\infty}\limsup_{n\to\infty}
\|r_n^J\|_{L^{p_\lambda}}=0.
\]

If
\[
U^j(t):=e^{t\Delta}\phi^j,
\qquad
R_n^J(t):=e^{t\Delta}r_n^J,
\]
then the heat solutions admit the balanced parabolic decomposition
\[
\boxed{
u_n
=\sum_{j=1}^J\mathcal G_{h_n^j,y_n^j}U^j+R_n^J,}
\]
and the remainder vanishes in the fully critical Morrey target:
\[
\boxed{
\lim_{J\to\infty}\limsup_{n\to\infty}
\|R_n^J\|_{\mathcal M^{2,\lambda}((0,\infty);L^{2^*})}=0.
}
\]
Each profile $U^j$ belongs to the critical Morrey evolution class of Lemma~\ref{lem:heat-morrey-extension}.
\end{theorem}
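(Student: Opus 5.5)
The first half of the statement, the decomposition of $(f_n)$ in $\dot H^\lambda$, is precisely Gérard's profile theorem~\cite{Gerard1998} for bounded sequences in $\dot H^s$ with $0<s<d/2$, here $s=\lambda$. I would quote it directly. The dislocations $g_{h,y}$ are the $\dot H^\lambda$-isometric translation--dilation group, the critical Sobolev exponent is $p_\lambda=2d/(d-2\lambda)$, and Gérard's theorem supplies all of the following: the profiles $\phi^j$, the orthogonality of the parameters, the Pythagorean expansion for each fixed $J$, and
\[
\lim_{J\to\infty}\limsup_{n\to\infty}\|r_n^J\|_{L^{p_\lambda}}=0.
\]
No new work is needed at this stage beyond matching normalizations: $h^{-d/2+\lambda}$ is the $\dot H^\lambda$-invariant scaling, and it is also $L^{p_\lambda}$-invariant because $d/p_\lambda=d/2-\lambda$.

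Next I would lift the decomposition to the evolution. The heat semigroup is linear, so applying $e^{t\Delta}$ to the decomposition of $f_n$ gives
\[
u_n=\sum_{j\le J}e^{t\Delta}g_{h_n^j,y_n^j}\phi^j+e^{t\Delta}r_n^J .
\]
The intertwining identity $e^{t\Delta}g_{h,y}=\mathcal G_{h,y}e^{t\Delta}$ then turns each profile term into $\mathcal G_{h_n^j,y_n^j}U^j$. I would verify this identity directly by the change of variables $x\mapsto(x-y)/h$ in the heat kernel, using the kernel's parabolic scaling $G_t(x)=h^{-d}G_{t/h^2}(x/h)$. This yields the boxed balanced decomposition with $R_n^J=e^{t\Delta}r_n^J$.

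The critical remainder estimate then follows in a single line from Lemma~\ref{lem:heat-critical-smoothing}:
\[
\|R_n^J\|_{\mathcal M^{2,\lambda}((0,\infty);L^{2^*})}\le C_{d,\lambda}\|r_n^J\|_{L^{p_\lambda}}.
\]
Taking $\limsup_n$ and then $J\to\infty$ gives the claim. One point to check is that $r_n^J\in\dot H^\lambda\subset L^{p_\lambda}$ by Sobolev embedding, so that the lemma applies. Finally, each $U^j=e^{t\Delta}\phi^j$ with $\phi^j\in\dot H^\lambda$ lies in the critical Morrey class by Lemma~\ref{lem:heat-morrey-extension}.

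I expect the main obstacle to be bookkeeping rather than analysis. The hard analytic content, namely the extraction of profiles modulo translations and dilations with $L^{p_\lambda}$-small remainder, is imported from Gérard. On our side, the genuinely new step is Lemma~\ref{lem:heat-critical-smoothing}, which is already proved. What I would check carefully is twofold. First, Gérard's orthogonality condition must be stated in the same form as here, namely $h^j/h^k+h^k/h^j+|y^j-y^k|/h^j\to\infty$; Gérard's $|x^j-x^k|/h^j$ version is equivalent once the scale ratios stay bounded. Second, the subsequence and diagonal extraction in Gérard's theorem must be compatible with the statement's quantifier order, \emph{for every finite $J$}, on a single subsequence.
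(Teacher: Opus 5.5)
Your proposal is correct and follows the paper's proof essentially verbatim. You quote G\'erard's $\dot H^\lambda$ profile decomposition, push it through the heat semigroup using linearity and the intertwining identity $e^{t\Delta}g_{h,y}=\mathcal G_{h,y}e^{t\Delta}$, bound the remainder by Lemma~\ref{lem:heat-critical-smoothing}, and place the profiles in the Morrey class by Lemma~\ref{lem:heat-morrey-extension}. Your heat-kernel check of the intertwining identity is a harmless addition, and your bookkeeping concerns are already settled by G\'erard's theorem itself: his orthogonality condition is stated in exactly this form, and his theorem supplies a single subsequence valid for every $J$.
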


\begin{proof}
Apply G\'erard's homogeneous Sobolev profile decomposition to the bounded sequence $(f_n)$ in $\dot H^\lambda$.  Since $0<\lambda<d/2$, the critical target is exactly $L^{p_\lambda}$.  This yields the spatial decomposition, parameter orthogonality, the Hilbert Pythagorean expansion, and the $L^{p_\lambda}$-small remainder.

Apply the heat semigroup to the decomposition.  Linearity and the scaling identity give
\[
u_n
=\sum_{j=1}^J\mathcal G_{h_n^j,y_n^j}U^j+R_n^J.
\]
Lemma~\ref{lem:heat-morrey-extension} places every profile in the critical Morrey evolution class.  Finally Lemma~\ref{lem:heat-critical-smoothing} gives
\[
\|R_n^J\|_{\mathcal M^{2,\lambda}_tL_x^{2^*}}
\le C_{d,\lambda}\|r_n^J\|_{L^{p_\lambda}}.
\]
Taking first $\limsup_{n\to\infty}$ and then $J\to\infty$ proves the critical remainder statement.
\end{proof}

\begin{corollary}\label{cor:critical-heat-trichotomy}
For bounded heat-flow sequences generated by bounded subsets of $\dot H^\lambda$, the profile theorem yields the following critical concentration alternatives.
\begin{enumerate}[label=\textup{(\roman*)}]
\item If all profiles vanish, then
\[
\|u_n\|_{\mathcal M_t^{2,\lambda}L_x^{2^*}}\to0.
\]
\item If there is exactly one nonzero profile, then after undoing one spatial translation and dilation at the initial time, the corresponding heat flows converge to that profile modulo a remainder vanishing in the critical Morrey target.
\item If there are at least two nonzero profiles, the initial $\dot H^\lambda$ energy decouples among pairwise orthogonal spatial translation--dilation profiles, while the heat-flow remainder is still negligible in the critical Morrey target.
\end{enumerate}
\end{corollary}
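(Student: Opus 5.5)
The plan is to read all three alternatives off Theorem~\ref{thm:critical-heat-profile}, after passing to its subsequence, and to sort cases by the number of nonzero profiles $\phi^j$. G\'erard's extraction is greedy. So if some profile vanishes, every later profile may be taken to vanish as well. The nonzero profiles therefore form an initial segment $\phi^1,\dots,\phi^{J_0}$ with $J_0\in\{0,1,\dots,\infty\}$, and for $J\ge J_0$ the remainders $r_n^J$ do not depend on $J$. The only analytic input needed in each case is the critical remainder bound
\[
\|R_n^J\|_{\mathcal M^{2,\lambda}_tL^{2^*}_x}\le C_{d,\lambda}\|r_n^J\|_{L^{p_\lambda}},
\]
which comes from Lemma~\ref{lem:heat-critical-smoothing}, together with the boxed limit in Theorem~\ref{thm:critical-heat-profile}.

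\textbf{Case (i), $J_0=0$.} The decomposition gives $f_n=r_n^J$ for every $J$. Hence
\[
\limsup_n\|f_n\|_{L^{p_\lambda}}=\lim_{J}\limsup_n\|r_n^J\|_{L^{p_\lambda}}=0.
\]
Lemma~\ref{lem:heat-critical-smoothing} then gives $\|u_n\|_{\mathcal M^{2,\lambda}_tL^{2^*}_x}\le C\|f_n\|_{L^{p_\lambda}}\to0$.

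\textbf{Case (ii), $J_0=1$.} Write $h_n=h_n^1$ and $y_n=y_n^1$. The decomposition reads $f_n=g_{h_n,y_n}\phi^1+r_n$, with $\|r_n\|_{L^{p_\lambda}}\to0$. Apply $g_{h_n,y_n}^{-1}=g_{h_n^{-1},-y_n/h_n}$. This map is isometric on $L^{p_\lambda}$, so
\[
g_{h_n,y_n}^{-1}f_n=\phi^1+\tilde r_n,\qquad \|\tilde r_n\|_{L^{p_\lambda}}\to0.
\]
Running the heat flow and using the intertwining identity gives
\[
\mathcal G_{h_n,y_n}^{-1}u_n=U^1+e^{t\Delta}\tilde r_n.
\]
The last term tends to zero in $\mathcal M^{2,\lambda}((0,\infty);L^{2^*})$ by Lemma~\ref{lem:heat-critical-smoothing}. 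Alternatively, one can note that $\mathcal G_{h,y}$ is an isometry of $\mathcal M^{2,\lambda}((0,\infty);L^{2^*})$. This holds because the time dilation $t\mapsto t/h^2$ maps intervals to intervals, and the powers of $h$ cancel exactly: the prefactor contributes $h^{-d/2+\lambda}$, the spatial $L^{2^*}$ norm contributes $h^{d/2^*}=h^{d/2-1}$, and the time integral together with the Morrey weight contribute $h^{1-\lambda}$. This isometry is the one computation to verify here. If it gave any trouble, the first route avoids it altogether.

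\textbf{Case (iii), $J_0\ge2$.} The asymptotic orthogonality of the parameters and the Pythagorean expansion in $\dot H^\lambda$ are taken directly from Theorem~\ref{thm:critical-heat-profile}. The negligibility of the heat-flow remainder is its boxed limit.

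The cases are mutually exclusive because $J_0$ is determined once the subsequence is fixed. I expect no real obstacle: the only delicate point is the scaling invariance used in (ii), and that can be bypassed as described.
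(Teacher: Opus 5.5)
Your proposal is correct and takes the same route as the paper, which simply sorts Theorem~\ref{thm:critical-heat-profile} by the number of nonzero profiles. Your case (ii) fills in details the paper leaves implicit, and all of them check out: the inverse $g_{h,y}^{-1}=g_{h^{-1},-y/h}$, its $L^{p_\lambda}$-isometry combined with Lemma~\ref{lem:heat-critical-smoothing}, and the alternative scaling invariance of $\mathcal M^{2,\lambda}_tL^{2^*}_x$ under $\mathcal G_{h,y}$.
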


\begin{proof}
This is an immediate reformulation of Theorem~\ref{thm:critical-heat-profile} according to the number of nonzero profiles.
\end{proof}

The same result applies to the Stokes semigroup on the whole space.  Let
\[
\dot H^\lambda_\sigma(\mathbb R^d)
:=\{f\in\dot H^\lambda(\mathbb R^d;\mathbb R^d):\operatorname{div}f=0\}.
\]

\begin{corollary}\label{cor:critical-stokes-profile}
Let $(f_n)$ be bounded in $\dot H^\lambda_\sigma(\mathbb R^d)$ and let
\[
u_n(t)=e^{-tA}f_n
\]
be the whole-space Stokes flows.  Then Theorem~\ref{thm:critical-heat-profile} holds with divergence-free vector-valued profiles $\phi^j\in\dot H^\lambda_\sigma$ and with the same critical remainder conclusion
\[
\lim_{J\to\infty}\limsup_{n\to\infty}
\|R_n^J\|_{\mathcal M^{2,\lambda}((0,\infty);L^{2^*})}=0.
\]
\end{corollary}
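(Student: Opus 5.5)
The plan is to reduce the Stokes statement to the scalar heat case componentwise. On $\mathbb R^d$, the Stokes operator restricted to divergence-free fields is $A=-\mathbb P\Delta$, where $\mathbb P$ is the Leray projector. Since $\mathbb P$ is a Fourier multiplier that commutes with $\Delta$ and acts as the identity on $\dot H^\lambda_\sigma$, we have $e^{-tA}f=e^{t\Delta}f$ componentwise for every $f\in\dot H^\lambda_\sigma$. Hence each $u_n$ is the vector of heat flows of the components of $f_n$, and Lemmas~\ref{lem:heat-morrey-extension} and~\ref{lem:heat-critical-smoothing} apply componentwise with the same constants, up to a factor $\sqrt d$.

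First, I would apply G\'erard's profile decomposition to the vector-valued sequence $(f_n)$ in $\dot H^\lambda(\mathbb R^d;\mathbb R^d)$. This is a Hilbert space with the same dislocation group acting diagonally, and G\'erard's argument works verbatim for finitely many components (equivalently, extract profiles of the $\mathbb R^d$-valued sequence directly). This yields $\phi^j$, $h_n^j$, $y_n^j$ and $r_n^J$ with orthogonal parameters, the Pythagorean expansion, and $\limsup_n\|r_n^J\|_{L^{p_\lambda}}\to0$ as $J\to\infty$.

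The only new point is that the profiles and remainders must be divergence-free. Each $\phi^j$ is a weak $\dot H^\lambda$ limit of $g_{h_n^j,y_n^j}^{-1}$ applied to $f_n$ minus previously extracted terms. The dislocations $g_{h,y}$ commute with $\operatorname{div}$ up to the scalar factor $h^{-1}$, so they preserve $\dot H^\lambda_\sigma$. This space is a closed subspace, hence weakly closed, so inductively every $\phi^j\in\dot H^\lambda_\sigma$. Then $r_n^J=f_n-\sum_{j\le J} g_{h_n^j,y_n^j}\phi^j$ is divergence-free as well. Alternatively, one can apply $\mathbb P$ to a general decomposition: $\mathbb P$ commutes with $g_{h,y}$ and is bounded on $\dot H^\lambda$ and on $L^{p_\lambda}$ for $1<p_\lambda<\infty$ by Mikhlin. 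This replaces $\phi^j$ by $\mathbb P\phi^j$ and $r_n^J$ by $\mathbb P r_n^J$ while keeping $\|\mathbb P r_n^J\|_{L^{p_\lambda}}\le C\|r_n^J\|_{L^{p_\lambda}}$. The exact Pythagorean identity could be lost under this replacement, so I prefer the weak-closedness route.

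Finally, since $e^{-tA}=e^{t\Delta}$ on $\dot H^\lambda_\sigma$ commutes with the parabolic lift, applying it to the decomposition gives $u_n=\sum_j\mathcal G_{h_n^j,y_n^j}e^{-tA}\phi^j+e^{-tA}r_n^J$. Lemma~\ref{lem:heat-critical-smoothing}, applied componentwise, bounds the remainder in $\mathcal M^{2,\lambda}_tL^{2^*}_x$ by $C\|r_n^J\|_{L^{p_\lambda}}$, and taking the limits in $n$ and then $J$ finishes the proof. The main obstacle I anticipate is justifying that the profile extraction stays inside $\dot H^\lambda_\sigma$; the weak-closedness argument above is what I would carry out first.
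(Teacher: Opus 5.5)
Your proposal is correct and follows essentially the same route as the paper. Both identify $e^{-tA}$ with the heat semigroup on divergence-free fields, use that dislocations preserve $\dot H^\lambda_\sigma$ and that this closed subspace is weakly closed (so profiles and remainders stay divergence-free), and then apply the heat smoothing estimate componentwise. Running G\'erard's decomposition once on the $\mathbb R^d$-valued sequence, so that all components share the same parameters, is the right reading of the paper's brief phrase ``applies componentwise''.
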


\begin{proof}
On divergence-free vector fields in $\mathbb R^d$, the Stokes semigroup is the heat semigroup.  Spatial translations and dilations preserve the divergence-free condition.  The divergence-free subspace is weakly closed in $\dot H^\lambda$, so every profile obtained as a weak limit of rescaled and translated divergence-free data is divergence free.  The remainder is divergence free as well.  Theorem~\ref{thm:critical-heat-profile} therefore applies componentwise.
\end{proof}

\begin{remark}\label{rem:why-hilbert-trace-closes-critical}
The critical theorem above explains why the unrestricted Morrey evolution class and the heat/Stokes solution class behave differently.  The exact Morrey trace space for the Hilbert couple is a weak Besov-type space with fine index $\infty$, whose endpoint structure permits nonsummable arrays of critical defects.  Restricting the trace to the scale-invariant Hilbert space $\dot H^\lambda$ restores square summability of profile energies.  The parabolic equation then removes the independent temporal scale and modulation parameters, and Lemma~\ref{lem:heat-critical-smoothing} converts the classical $L^{p_\lambda}$ profile remainder into the exact critical time-Morrey remainder.  Thus the balanced parabolic profile theorem is valid on this natural Hilbert trace subclass even though it fails for arbitrary bounded Morrey evolutions.
\end{remark}

\section{A Morrey refinement of nonlinear Navier--Stokes profiles}\label{sec:ns-morrey-profiles}
The critical heat/Stokes theorem has a nonlinear consequence in three dimensions which can be obtained without rebuilding the entire perturbation argument.  Gallagher's nonlinear profile theorem already supplies the orthogonal nonlinear profiles together with two remainders of different types.  Both remainders are automatically small in a continuum of critical time--Morrey spaces.

For $2\le p<4$ put
\[
\lambda_p:=1-\frac p4,
\qquad
Y_p(T):=\mathcal M^{p,\lambda_p}(0,T;L^6(\mathbb R^3)).
\]
These spaces are invariant under the Navier--Stokes scaling
\[
(\mathcal G_{h,y}U)(t,x)
=h^{-1}U(t/h^2,(x-y)/h).
\]
The identity
\[
\frac{1-\lambda_p}{p}=\frac14
\]
is the critical balance.

For the global alternative in the classical nonlinear profile theorem we record one auxiliary fact.  It is specific to actual Navier--Stokes trajectories and should not be confused with a density statement for arbitrary paths in the ambient energy space.

\begin{lemma}\label{lem:global-profile-decay}
Let $V$ be a global mild three-dimensional Navier--Stokes solution satisfying
\[
V\in E_\infty
:=L^\infty(0,\infty;\dot H^{1/2})\cap L^2(0,\infty;\dot H^{3/2}).
\]
Then
\[
\|V(t)\|_{\dot H^{1/2}}\longrightarrow0
\qquad(t\to\infty).
\]
Consequently $V$ can be approximated in $E_\infty$ by trajectories compactly supported in time.  On each finite time window one may then use smooth rapidly decaying spatial approximants; for the Morrey bilinear estimate below, the weaker space $L_t^4L_x^6$ admits smooth compactly supported spacetime approximation.
\end{lemma}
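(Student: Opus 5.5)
The argument has two parts: first show that $\|V(t)\|_{\dot H^{1/2}}\to0$, then deduce the approximation statements from it.

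\textbf{Decay.} Since $V\in L^2(0,\infty;\dot H^{3/2})$, for every $\varepsilon>0$ we can pick a time $t_0$ with $\|V(t_0)\|_{\dot H^{1/2}}\le\varepsilon$; indeed $\dot H^{3/2}$-smallness at some time follows from integrability. Smallness in $\dot H^{3/2}$ alone does not give smallness in $\dot H^{1/2}$, so we argue differently. Interpolating $L^\infty\dot H^{1/2}$ with $L^2\dot H^{3/2}$ gives $V\in L^4(0,\infty;\dot H^1)$. Since $\int_0^\infty\|V(t)\|_{\dot H^1}^4\,dt<\infty$, there is a time $t_0$ with $\|V(t_0)\|_{\dot H^1}$ as small as we like. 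That is still the wrong norm for the small-data theory. The cleaner route is the \emph{tail} of the Kato norm: $\|V\|_{L^4(t_0,\infty;\dot H^1)}\to0$ as $t_0\to\infty$. We then write the equation on $(t_0,\infty)$ in Duhamel form,
\[
V(t)=e^{(t-t_0)\Delta}V(t_0)-\int_{t_0}^t e^{(t-s)\Delta}\mathbb P\nabla\cdot(V\otimes V)\,ds .
\]
The energy-type bilinear estimate in $\dot H^{1/2}$ bounds the Duhamel term in $L^\infty(t_0,\infty;\dot H^{1/2})$ by $C\|V\|_{L^4(t_0,\infty;\dot H^1)}^2$, and this tends to zero as $t_0\to\infty$. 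The linear term $e^{(t-t_0)\Delta}V(t_0)$ is not small uniformly in $t_0$, but for fixed $t_0$ it decays as $t\to\infty$ in $\dot H^{1/2}$, by dominated convergence on the Fourier side. Combining the two: given $\varepsilon$, choose $t_0$ so that the Duhamel tail is below $\varepsilon$, then let $t\to\infty$ to make the linear term small. This proves $\limsup_{t\to\infty}\|V(t)\|_{\dot H^{1/2}}\le\varepsilon$ for every $\varepsilon>0$.

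\textbf{Truncation in time.} Let $\chi_R(t)=\chi(t/R)$ be a smooth time cutoff. The $L^2\dot H^{3/2}$ error of $V-\chi_RV$ is bounded by the tail integral, which tends to zero. The $L^\infty\dot H^{1/2}$ error is bounded by $\sup_{t\ge R}\|V(t)\|_{\dot H^{1/2}}$, which tends to zero by the decay step; this is exactly where decay is needed. The cutoff's time derivative never enters the $E_\infty$ norm, so no derivative issue arises.

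\textbf{Smoothing in space and time.} On a finite window we mollify in space and truncate in frequency and in physical space. Strong continuity in $\dot H^{1/2}$ is not uniform in time for the $L^\infty$ component, but $V\in C([0,R];\dot H^{1/2})$ (mild solutions are continuous), so $\{V(t):t\in[0,R]\}$ is compact and Fourier truncation converges uniformly on it. For the weaker target $L^4_tL^6_x$ the sentence is simpler: by Sobolev, $L^4\dot H^1\subset L^4L^6$, and $C_c^\infty$ is dense in the Bochner space $L^4(0,\infty;L^6)$ by the standard simple-function density argument.

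\textbf{Main obstacle.} The hard step is the decay of $\|V(t)\|_{\dot H^{1/2}}$. The natural argument via smallness at a single time fails because $L^2\dot H^{3/2}$ controls the wrong norm, and this is why I split the solution into the Duhamel tail and the linear part as above.
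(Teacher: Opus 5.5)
Your proposal is correct and follows the paper's proof essentially step for step. You interpolate to $L^4_t\dot H^1_x$, restart the mild formulation at $t_0$ so that the Duhamel term is bounded in $L^\infty_t\dot H^{1/2}_x$ by the vanishing tail $\|V\|_{L^4(t_0,\infty;\dot H^1)}^2$, use Fourier-side dominated convergence for the free heat orbit, and then use the decay to control the $L^\infty\dot H^{1/2}$ part of the time-cutoff error. The one thing to fix is your opening sentence: it asserts $\dot H^{1/2}$-smallness at some time on the strength of $L^2\dot H^{3/2}$ integrability, which is not a valid justification. You retract it immediately, so simply delete it.
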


\begin{proof}
Interpolation gives $V\in L^4(0,\infty;\dot H^1)$.  Restart the mild equation at a time $T>0$:
\[
V(t)=e^{(t-T)\Delta}V(T)
-\int_T^t e^{(t-s)\Delta}\mathbb P\nabla\!\cdot(V\otimes V)(s)\,ds.
\]
The forced Stokes estimate together with
\[
\|V\otimes V\|_{\dot H^{1/2}}
\lesssim \|V\|_{\dot H^1}^2
\]
gives
\[
\sup_{t\ge T}
\left\|
\int_T^t e^{(t-s)\Delta}\mathbb P\nabla\!\cdot(V\otimes V)(s)\,ds
\right\|_{\dot H^{1/2}}
\le C\|V\|_{L^4(T,\infty;\dot H^1)}^2,
\]
and the right-hand side tends to zero as $T\to\infty$.  For fixed $T$, the heat orbit $e^{(t-T)\Delta}V(T)$ tends to zero in $\dot H^{1/2}$ by dominated convergence on the Fourier side.  Thus $\|V(t)\|_{\dot H^{1/2}}\to0$.

Let $\chi_T$ be a smooth cutoff equal to one on $[0,T]$ and zero on $[T+1,\infty)$.  The $L^\infty_t\dot H^{1/2}$ tail decay just proved and the $L^2_t\dot H^{3/2}$ tail integrability imply
\[
\|V-\chi_TV\|_{E_\infty}\to0.
\]
This proves the time-compact approximation.  The remaining approximation statements follow from standard density in the indicated finite-time Sobolev and $L_t^4L_x^6$ spaces.
\end{proof}

\begin{lemma}\label{lem:energy-to-morrey-kato}
Let
\[
E_T=L^\infty(0,T;\dot H^{1/2}(\mathbb R^3))
\cap L^2(0,T;\dot H^{3/2}(\mathbb R^3)).
\]
Then for every $2\le p\le4$,
\[
\|v\|_{Y_p(T)}
\le C
\|v\|_{L^\infty_t\dot H^{1/2}_x}^{1/2}
\|v\|_{L^2_t\dot H^{3/2}_x}^{1/2}.
\]
The constant is independent of $T$.
\end{lemma}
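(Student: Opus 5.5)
The plan is to prove a pointwise-in-time estimate first and then integrate it over an arbitrary interval. For fixed $t$, Sobolev embedding in $\mathbb R^3$ gives $\dot H^1\hookrightarrow L^6$. Interpolation between $\dot H^{1/2}$ and $\dot H^{3/2}$ then gives
\[
\|v(t)\|_{L^6}\le C\|v(t)\|_{\dot H^1}\le C\|v(t)\|_{\dot H^{1/2}}^{1/2}\|v(t)\|_{\dot H^{3/2}}^{1/2}.
\]
Writing $A:=\|v\|_{L^\infty_t\dot H^{1/2}_x}$ and $B:=\|v\|_{L^2_t\dot H^{3/2}_x}$, this becomes $\|v(t)\|_{L^6}\le CA^{1/2}g(t)^{1/2}$ with $g(t):=\|v(t)\|_{\dot H^{3/2}}$ and $\|g\|_{L^2(0,T)}=B$. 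Raising to the power $p$, it therefore suffices to bound, for every interval $J\subset(0,T)$,
\[
|J|^{-\lambda_p}\int_J g(t)^{p/2}\,dt
\]
by $CB^{p/2}$ with $C$ independent of $T$. The Morrey norm would then be at most $C A^{1/2}B^{1/2}$.

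The key step is Hölder's inequality on $J$ with exponent $4/p\ge1$, which is where the hypothesis $p\le4$ is used:
\[
\int_J g^{p/2}\,dt\le |J|^{1-p/4}\Bigl(\int_J g^2\,dt\Bigr)^{p/4}\le |J|^{\lambda_p}B^{p/2}.
\]
The power of $|J|$ is exactly $\lambda_p=1-p/4$. It cancels the Morrey weight with no leftover factor of $|J|$ or $T$, and this cancellation is what makes the constant independent of $T$. It is the critical balance $(1-\lambda_p)/p=1/4$ recorded above. Taking the supremum over $J$ and the $p$-th root finishes the argument. At the endpoint $p=4$, where $\lambda_4=0$, the estimate reduces to the classical $L^4_tL^6_x$ bound, so the range $2\le p\le4$ is covered uniformly.

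I do not expect a serious obstacle. The one point needing care is checking that the interpolation step is legitimate for elements of $E_T$, which are tempered distributions modulo polynomials in homogeneous spaces. To handle this, I would state the pointwise inequality via Plancherel and Cauchy--Schwarz on the Fourier side,
\[
\int|\xi|^2|\hat v|^2\,d\xi\le\Bigl(\int|\xi||\hat v|^2\,d\xi\Bigr)^{1/2}\Bigl(\int|\xi|^3|\hat v|^2\,d\xi\Bigr)^{1/2},
\]
and then use the homogeneous Sobolev embedding $\dot H^1(\mathbb R^3)\hookrightarrow L^6$ for functions vanishing at infinity. For a.e.\ $t$ the function $v(t)$ lies in $\dot H^{1/2}\cap\dot H^{3/2}$, so this embedding applies.
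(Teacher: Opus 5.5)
Your proof is correct and follows essentially the paper's argument: the pointwise $\dot H^{1/2}$--$\dot H^{3/2}$ interpolation together with $\dot H^1\hookrightarrow L^6$, followed by H\"older in time on each interval, where the exact cancellation $(1-\lambda_p)/p=1/4$ gives a $T$-independent constant. The only difference is organizational. The paper first records the intermediate bound in $L^4_t\dot H^1_x$ and then proves the embedding $L^4(0,T;X)\hookrightarrow\mathcal M^{p,\lambda_p}(0,T;X)$, whereas you merge these two steps into a single H\"older estimate for $g^{p/2}$.
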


\begin{proof}
Interpolation in the homogeneous Hilbert scale gives
\[
\|v(t)\|_{\dot H^1}
\le
\|v(t)\|_{\dot H^{1/2}}^{1/2}
\|v(t)\|_{\dot H^{3/2}}^{1/2}.
\]
Hence
\[
\|v\|_{L^4_t\dot H^1_x}
\le
\|v\|_{L^\infty_t\dot H^{1/2}_x}^{1/2}
\|v\|_{L^2_t\dot H^{3/2}_x}^{1/2}.
\]
Using $\dot H^1(\mathbb R^3)\hookrightarrow L^6$ it remains to observe that
\[
L^4(0,T;X)\hookrightarrow\mathcal M^{p,\lambda_p}(0,T;X),
\qquad 2\le p\le4.
\]
Indeed, on every interval $I$,
\[
|I|^{-\lambda_p/p}\|f\|_{L^p(I;X)}
\le
|I|^{-\lambda_p/p+1/p-1/4}\|f\|_{L^4(I;X)}
=\|f\|_{L^4(I;X)}.
\]
\end{proof}

\begin{lemma}\label{lem:heat-l3-morrey-kato}
For every $2\le p<4$ and every $f\in L^3(\mathbb R^3)$,
\[
\|e^{t\Delta}f\|_{Y_p(\infty)}\le C_p\|f\|_{L^3}.
\]
\end{lemma}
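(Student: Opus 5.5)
This is the three-dimensional Navier--Stokes analogue of Lemma~\ref{lem:heat-critical-smoothing}, and I would prove it by the same pointwise-decay route. The heat kernel estimate from $L^3$ to $L^6$ in $\mathbb R^3$ gives
\[
\|e^{t\Delta}f\|_{L^6}\le C t^{-\frac32(\frac13-\frac16)}\|f\|_{L^3}=Ct^{-1/4}\|f\|_{L^3}.
\]
So it suffices to show that the scalar function $t\mapsto t^{-1/4}$ belongs to $\mathcal M^{p,\lambda_p}(0,\infty)$ whenever $2\le p<4$.

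Take an interval $J=(a,b)\subset(0,\infty)$ and set $h=b-a$. Since $p/4<1$, the function $t^{-p/4}$ is locally integrable at $0$, and
\[
\int_a^b t^{-p/4}\,dt=\frac{b^{1-p/4}-a^{1-p/4}}{1-p/4}\le\frac{h^{1-p/4}}{1-p/4}=\frac{h^{\lambda_p}}{\lambda_p}.
\]
The inequality holds because $s\mapsto s^{\lambda_p}$ is concave with value $0$ at $s=0$, hence subadditive; this is exactly the step used in Lemma~\ref{lem:heat-critical-smoothing}. Multiplying by $h^{-\lambda_p}$, taking the $p$-th root and then the supremum over $J$ gives
\[
\|e^{t\Delta}f\|_{Y_p(\infty)}\le C\lambda_p^{-1/p}\|f\|_{L^3},
\]
so $C_p=C\lambda_p^{-1/p}$.

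The only delicate point is the role of the restriction $p<4$. It is precisely what keeps $\lambda_p>0$, so that $t^{-p/4}$ is integrable near $t=0$; the constant $C_p$ blows up as $p\to4$. This is consistent with the fact that $L^3$ data do not give $e^{t\Delta}f\in L^4_tL^6_x$ on $(0,\infty)$ with a uniform bound: $t^{-1/4}$ is only weak-$L^4$.

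Measurability and the choice of representative cause no trouble, because $e^{t\Delta}f$ is continuous into $L^6$ for $t>0$. I therefore expect no genuine obstacle beyond checking the scaling identity $(1-\lambda_p)/p=1/4$, which is already noted in the text.
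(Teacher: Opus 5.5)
Your proof is correct and is the same as the paper's: the $L^3\to L^6$ heat bound with decay $t^{-1/4}$, followed by $\int_a^b t^{-p/4}\,dt\le h^{\lambda_p}/\lambda_p$, which uses subadditivity of $s\mapsto s^{\lambda_p}$. Your side remark is wrong, although the proof does not use it: $L^3$ data do satisfy $\|e^{t\Delta}f\|_{L^4(0,\infty;L^6)}\le C\|f\|_{L^3}$, by Marcinkiewicz interpolation in $f$ (equivalently, $L^3\hookrightarrow\dot B^{-1/2}_{6,4}$), so the blow-up of your constant as $p\to4$ comes from the pointwise-decay method and is not a genuine obstruction.
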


\begin{proof}
The heat bound
\[
\|e^{t\Delta}f\|_6\lesssim t^{-1/4}\|f\|_3
\]
and the identity $p/4=1-\lambda_p<1$ give, for every interval $(a,b)$ of length $h$,
\[
\int_a^b t^{-p/4}\,dt
\le C_p h^{\lambda_p}.
\]
This is exactly the claimed Morrey estimate.
\end{proof}

\begin{theorem}\label{thm:ns-morrey-profile}
Let $(\phi_n)$ be bounded in the divergence-free space $\dot H^{1/2}_\sigma(\mathbb R^3)$.  After extraction, let
\[
\phi_n
=\sum_{j=0}^{\ell}\frac1{h_n^j}
\phi^j\!\left(\frac{x-x_n^j}{h_n^j}\right)
+\psi_n^\ell
\]
be the G\'erard decomposition used in Gallagher's nonlinear profile theorem, and let $V^j=NS(\phi^j)$.  Let $\tau_n$ be the common time supplied by that theorem.  Then for every $2\le p<4$,
\[
NS(\phi_n)(t,x)
=\sum_{j=0}^{\ell}\frac1{h_n^j}
V^j\!\left(
\frac{t}{(h_n^j)^2},
\frac{x-x_n^j}{h_n^j}
\right)
+R_n^\ell(t,x)
\]
on $(0,\tau_n)$, with
\[
\lim_{\ell\to\infty}\limsup_{n\to\infty}
\|R_n^\ell\|_{Y_p(\tau_n)}=0.
\]
Whenever Gallagher's global alternative applies, the same conclusion holds with $\tau_n=\infty$; Lemma~\ref{lem:global-profile-decay} supplies the global-time approximation property needed for the actual nonlinear profiles.
\end{theorem}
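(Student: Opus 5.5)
We do not rebuild Gallagher's theorem; we take its output and measure the remainder in $Y_p$. Recall its structure. The remainder splits as
\[
R_n^\ell=e^{t\Delta}\psi_n^\ell+w_n^\ell .
\]
Here $\psi_n^\ell$ is G\'erard's linear remainder. It satisfies $\lim_{\ell}\limsup_n\|\psi_n^\ell\|_{L^3}=0$, since $\dot H^{1/2}\hookrightarrow L^3$ is the critical embedding with $p_\lambda=3$ for $d=3$, $\lambda=1/2$. The nonlinear error $w_n^\ell$ satisfies
\[
\lim_{\ell\to\infty}\limsup_{n\to\infty}\|w_n^\ell\|_{E_{\tau_n}}=0,
\qquad E_{\tau_n}=L^\infty(0,\tau_n;\dot H^{1/2})\cap L^2(0,\tau_n;\dot H^{3/2}).
\]
The plan is to estimate the two pieces separately with the two lemmas just proved, using the triangle inequality in $Y_p(\tau_n)$.

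For the linear piece, restriction from $(0,\infty)$ to $(0,\tau_n)$ does not increase the Morrey norm, because the supremum is over fewer intervals. Lemma~\ref{lem:heat-l3-morrey-kato} then gives
\[
\|e^{t\Delta}\psi_n^\ell\|_{Y_p(\tau_n)}\le C_p\|\psi_n^\ell\|_{L^3}.
\]
Taking $\limsup_n$ and then $\ell\to\infty$ kills this term. Here $p<4$ is needed, since the integral of $t^{-p/4}$ must converge at $0$.

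For the nonlinear piece, Lemma~\ref{lem:energy-to-morrey-kato} gives
\[
\|w_n^\ell\|_{Y_p(\tau_n)}\le C\|w_n^\ell\|_{E_{\tau_n}}
\]
with a constant independent of the interval length. This is what allows $\tau_n$ to vary and even be infinite. So the double limit vanishes here too.

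The identification of the profile sum with the rescaled $V^j$ is exactly Gallagher's statement. Scale invariance of $Y_p$ under $\mathcal G_{h,y}$ is only needed to note that each term is finite. In the global alternative ($\tau_n=\infty$), the same two estimates apply verbatim, since both lemmas are uniform in $T$. Lemma~\ref{lem:global-profile-decay} is invoked only insofar as Gallagher's argument needs time-compact approximation of the $V^j$ to produce the $E_\infty$-smallness of $w_n^\ell$.

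The main obstacle is bookkeeping rather than analysis. One must check that the form of Gallagher's theorem used really delivers smallness of $w_n^\ell$ in the full energy norm $E_{\tau_n}$, in the order $\lim_\ell\limsup_n$, and not merely in $L^4_t\dot H^1$ or $L^5_{t,x}$. If it only gives $L^4_t\dot H^1_x$ smallness, that suffices as well. The proof of Lemma~\ref{lem:energy-to-morrey-kato} shows that $L^4_t\dot H^1_x\hookrightarrow L^4_tL^6_x\hookrightarrow Y_p$ with constant one on every interval, so we would route the argument through that embedding instead.
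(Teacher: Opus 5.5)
Your proposal is correct and is essentially the paper's own proof. You split Gallagher's remainder into the heat flow $e^{t\Delta}\psi_n^\ell$ of the G\'erard remainder, controlled by Lemma~\ref{lem:heat-l3-morrey-kato} through $\|\psi_n^\ell\|_{L^3}$, and the nonlinear error, controlled by Lemma~\ref{lem:energy-to-morrey-kato} through its $E_{\tau_n}$ norm, both with constants independent of the time horizon, and you conclude by the triangle inequality; your extra remarks (monotonicity of the Morrey norm under restriction to $(0,\tau_n)$, and the fallback route $L^4_t\dot H^1_x\hookrightarrow L^4_tL^6_x\hookrightarrow Y_p$) are accurate refinements of that same argument.
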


\begin{proof}
Gallagher's theorem gives
\[
R_n^\ell=w_n^\ell+r_n^\ell,
\qquad
w_n^\ell=e^{t\Delta}\psi_n^\ell,
\]
where the initial G\'erard remainder satisfies
\[
\lim_{\ell\to\infty}\limsup_n
\|\psi_n^\ell\|_{L^3}=0,
\]
and the nonlinear remainder satisfies
\[
\lim_{\ell\to\infty}\limsup_n
\|r_n^\ell\|_{E_{\tau_n}}=0.
\]
Lemma~\ref{lem:heat-l3-morrey-kato} makes $w_n^\ell$ small in $Y_p$ uniformly in $\tau_n$, while Lemma~\ref{lem:energy-to-morrey-kato} makes $r_n^\ell$ small in the same norm.  The conclusion follows by the triangle inequality.  Notice that the Morrey topology itself is not imported from Gallagher's interaction estimates: Theorem~\ref{thm:morrey-cross-interaction} proves the corresponding orthogonality directly in the natural Morrey forcing space.
\end{proof}

\begin{remark}\label{rem:ns-morrey-endpoint}
The endpoint $p=2$, $\lambda_2=1/2$, is included in Theorem~\ref{thm:ns-morrey-profile}.  Thus $\mathcal M_t^{2,1/2}L_x^6$ is a valid critical profile-remainder topology even though the direct quadratic contraction argument in this space encounters the endpoint $\mathcal M^{1,1/2}$ fractional-integral problem.  For $2<p<4$, the spaces $Y_p$ are also strong perturbative Navier--Stokes spaces via the Adams Morrey fractional-integral estimate.
\end{remark}

The Morrey refinement is compatible with the profile geometry itself.  We make the interaction statement explicit.

\begin{theorem}\label{thm:morrey-cross-interaction}
Let $U\in E_{T_1}$ and $V\in E_{T_2}$, where
\[
E_T=L^\infty(0,T;\dot H^{1/2})\cap L^2(0,T;\dot H^{3/2}).
\]
Let $(h_n,x_n)$ and $(k_n,y_n)$ be orthogonal G\'erard parameters and set
\[
U_n(t,x)=h_n^{-1}U(t/h_n^2,(x-x_n)/h_n),
\qquad
V_n(t,x)=k_n^{-1}V(t/k_n^2,(x-y_n)/k_n).
\]
On every common time interval on which the two scaled profiles are defined, for every $2\le p\le4$,
\[
\boxed{
\|U_nV_n\|_{\mathcal M_t^{p/2,\lambda_p}L_x^3}\longrightarrow0.
}
\]
\end{theorem}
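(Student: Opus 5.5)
The plan is to reduce the statement to a bilinear Hölder estimate, then to a density argument, and finally to the classical orthogonality computation for smooth compactly supported profiles.

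\emph{Step 1 (bilinear bound).} For every interval $J$, Hölder in $x$ ($\frac13=\frac16+\frac16$) and then Cauchy--Schwarz in $t$ give
\[
|J|^{-2\lambda_p/p}\Bigl(\int_J\|fg\|_{L^3}^{p/2}dt\Bigr)^{2/p}
\le \Bigl(|J|^{-\lambda_p/p}\|f\|_{L^p(J;L^6)}\Bigr)\Bigl(|J|^{-\lambda_p/p}\|g\|_{L^p(J;L^6)}\Bigr).
\]
Taking the supremum over $J$ yields
\[
\|fg\|_{\mathcal M_t^{p/2,\lambda_p}L_x^3}\le\|f\|_{Y_p}\|g\|_{Y_p}.
\]
By the embedding $L^4_tL^6_x\hookrightarrow Y_p$ established in the proof of Lemma~\ref{lem:energy-to-morrey-kato}, the right side is at most $\|f\|_{L^4_tL^6_x}\|g\|_{L^4_tL^6_x}$, with a constant independent of the time interval. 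Moreover, $\|U_n\|_{L^4L^6}=\|U\|_{L^4L^6}$ because the Navier--Stokes scaling preserves this norm. Since $U\in E_{T_1}$ embeds into $L^4_tL^6_x$ by that same lemma, the map $(U,V)\mapsto U_nV_n$ is bounded uniformly in $n$. Hence it suffices to prove the statement for $U,V$ in a dense subclass of $L^4_tL^6_x$ on the respective windows. Here I would take smooth functions with compact support in $(0,T_i)\times\mathbb R^3$, as in Lemma~\ref{lem:global-profile-decay}. The error is controlled by $\varepsilon(\|U\|+\|V\|)$ uniformly in $n$.

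\emph{Step 2 (reduction to $L^2_tL^3_x$).} The same interval Hölder argument as in Proposition~\ref{prop:morrey-exponent-embedding} applies, now without restricting the length of $J$. Since
\[
\frac{1-\lambda_p}{p/2}=\frac12
\qquad\text{and}\qquad
\frac p2\le2,
\]
we get
\[
\|F\|_{\mathcal M_t^{p/2,\lambda_p}L^3_x}\le\|F\|_{L^2_tL^3_x},
\]
scale-invariantly. So it is enough to show $\|U_nV_n\|_{L^2_tL^3_x}\to0$ for smooth compactly supported $U,V$.

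\emph{Step 3 (orthogonality).} Pass to subsequences and split according to the G\'erard orthogonality.
\begin{itemize}
\item If $k_n/h_n\to0$, use $\|U_n\|_{L^\infty_{t,x}}=h_n^{-1}\|U\|_\infty$ and the scaling $\|V_n\|_{L^2_tL^3_x}=k_n\|V\|_{L^2L^3}$. Together these give the bound $(k_n/h_n)\|U\|_\infty\|V\|_{L^2L^3}\to0$. The symmetric case $h_n/k_n\to0$ is identical.
\item If $h_n/k_n\to c\in(0,\infty)$, then $|x_n-y_n|/h_n\to\infty$. After rescaling by $h_n$, the two functions have spatial supports in fixed balls whose centers separate, so $U_nV_n\equiv0$ for large $n$.
\end{itemize}

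The step I expect to be the main obstacle is the density step in the global alternative ($T_i=\infty$) and at windows that shrink with $n$. There I will rely on Lemma~\ref{lem:global-profile-decay} for time-compact approximation, and on the fact that all bounds above are independent of the time interval, so restriction to any common window only decreases the norms.
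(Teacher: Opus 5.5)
Your proposal is correct and follows essentially the paper's route. Both arguments use the interval-wise H\"older bound $\|ab\|_{\mathcal M_t^{p/2,\lambda_p}L_x^3}\le\|a\|_{Y_p}\|b\|_{Y_p}$, then density of smooth compactly supported functions in $L^4_tL^6_x$, then either scale separation (the large-scale factor in $L^\infty$ gains the ratio of the small scale to the large one) or disjoint spatial supports for comparable scales. Your detour through the scale-invariant bound $\|F\|_{\mathcal M_t^{p/2,\lambda_p}L^3_x}\le\|F\|_{L^2_tL^3_x}$ is a slightly cleaner way to do the scaling bookkeeping than the paper's rescaling of the Morrey forcing norm. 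The density step you flag as the main obstacle is routine, and it should not rely on Lemma~\ref{lem:global-profile-decay}, which applies only to Navier--Stokes solutions, not to arbitrary elements of $E_T$. Truncation and mollification already give density of $C_c^\infty((0,T_i)\times\mathbb R^3)$ in $L^4(0,T_i;L^6)$ even for $T_i=\infty$, and your interval-independent bounds take care of shrinking windows.
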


\begin{proof}
By Lemma~\ref{lem:energy-to-morrey-kato}, $E_T\hookrightarrow L_t^4L_x^6\hookrightarrow Y_p$, and smooth compactly supported functions are dense in $L_t^4L_x^6$.  Since H\"older's inequality on every time interval gives
\[
|J|^{-2\lambda_p/p}\|ab\|_{L_t^{p/2}(J;L_x^3)}
\le
\bigl(|J|^{-\lambda_p/p}\|a\|_{L_t^p(J;L_x^6)}\bigr)
\bigl(|J|^{-\lambda_p/p}\|b\|_{L_t^p(J;L_x^6)}\bigr),
\]
we have
\[
\|ab\|_{\mathcal M_t^{p/2,\lambda_p}L_x^3}
\le \|a\|_{Y_p}\|b\|_{Y_p}.
\]
This remains valid at $p=2$, where the forcing space is the Banach Morrey space $\mathcal M_t^{1,1/2}L_x^3$; no fractional-integral endpoint estimate is being used in the present theorem.  It is therefore enough to prove the assertion for smooth compactly supported $U$ and $V$.

Suppose first, after interchanging the profiles if necessary, that
\[
\varepsilon_n:=h_n/k_n\to0.
\]
The forcing norm is invariant under
\[
F(t,x)\mapsto h_n^2F(h_n^2t,x_n+h_nx).
\]
In these coordinates the product becomes
\[
U(t,x)\,\varepsilon_n
V\!\left(\varepsilon_n^2t,
\frac{x_n-y_n}{k_n}+\varepsilon_n x\right).
\]
On the fixed compact support of $U$ the second factor is $O(\varepsilon_n)$ uniformly, hence the product tends to zero in the Morrey forcing norm.  If the two scales are comparable, the G\'erard orthogonality convention allows the profiles, after absorbing a fixed limiting scale ratio into one profile, to be represented with the same scale.  Orthogonality then forces the normalized spatial cores to diverge.  After rescaling by that common scale, the compact spatial supports are disjoint for all large $n$.  Approximation in $L_t^4L_x^6$ completes the proof, including the endpoint $p=2$.
\end{proof}

For $2<p<4$ the interaction can be propagated through the Duhamel operator in the same Morrey scale.

\begin{lemma}\label{lem:adams-duhamel}
Let $2<p<4$, $\lambda_p=1-p/4$, and
\[
Z_p(T):=\mathcal M^{p/2,\lambda_p}(0,T;L^3(\mathbb R^3)).
\]
Then
\[
\left\|\int_0^t e^{(t-s)\Delta}\mathbb P\nabla\!\cdot F(s)\,ds\right\|_{Y_p(T)}
\le C_p\|F\|_{Z_p(T)},
\]
with a constant independent of $T$.  Consequently
\[
\|B(u,v)\|_{Y_p(T)}\le C_p\|u\|_{Y_p(T)}\|v\|_{Y_p(T)}.
\]
\end{lemma}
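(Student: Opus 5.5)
The plan is to reduce the Duhamel estimate to a one-dimensional fractional integral in time and then invoke Adams' Morrey fractional-integral theorem.

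\textbf{Pointwise reduction.} The operator $e^{\tau\Delta}\mathbb P\nabla\cdot$ on $\mathbb R^3$ is convolution with a kernel homogeneous of degree $-4$ in parabolic scaling (the Oseen kernel gradient). Young's inequality, with $1+1/6=1/\tilde r+1/3$, gives
\[
\|e^{\tau\Delta}\mathbb P\nabla\cdot G\|_{L^6}\le C\tau^{-1/2-\frac32(1/3-1/6)}\|G\|_{L^3}=C\tau^{-3/4}\|G\|_{L^3}.
\]
Setting $g(s):=\|F(s)\|_{L^3}\mathbf 1_{(0,T)}(s)$, Minkowski's inequality then yields the pointwise-in-time bound
\[
\Bigl\|\int_0^t e^{(t-s)\Delta}\mathbb P\nabla\cdot F(s)\,ds\Bigr\|_{L^6}\le C\int_0^t (t-s)^{-3/4}g(s)\,ds\le C\,I_{1/4}g(t),
\]
where $I_\alpha g(t)=\int_{\mathbb R}|t-s|^{\alpha-1}g(s)\,ds$ is the one-dimensional Riesz potential of order $\alpha=1/4$.

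\textbf{Morrey bound.} I will apply Adams' theorem on $\mathbb R$ in the Morrey normalization $\|f\|_{L^{q,\kappa}}=\sup_J|J|^{-(1-\kappa)/q}\|f\|_{L^q(J)}$, where $\kappa$ is the ``dimension'' parameter. The source is $g\in L^{p/2,\kappa}$ with $\kappa=1-\lambda_p=p/4$, and Adams gives $I_\alpha: L^{q,\kappa}\to L^{r,\kappa}$ with $1/r=1/q-\alpha/\kappa$, provided $1<q<\kappa/\alpha$. Here $q=p/2>1$ exactly because $p>2$, and $\kappa/\alpha=p>p/2$. The target exponent is $1/r=2/p-1/p=1/p$, so $r=p$ with the same $\kappa$, i.e.\ the Morrey parameter is $\lambda_p$. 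This is exactly $Y_p$.

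Two points need care. The zero extension of $g$ outside $(0,T)$ does not increase its Morrey norm on $\mathbb R$, since any interval $J$ meeting $(0,T)$ gives an integral over $J\cap(0,T)$ with $|J\cap(0,T)|\le|J|$ and a negative power of length. Restricting the output to intervals inside $(0,T)$ then gives a constant independent of $T$. I expect the main obstacle to be precisely this bookkeeping: translating between the paper's $\mathcal M^{p,\lambda}$ normalization and Adams' parameters, and checking the endpoint exclusion. At $p=2$ one would have $q=1$, where only a weak-type Adams bound holds, which is consistent with Remark~\ref{rem:ns-morrey-endpoint}.

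\textbf{Bilinear consequence.} With $B(u,v)=-\int_0^t e^{(t-s)\Delta}\mathbb P\nabla\cdot(u\otimes v)\,ds$, the H\"older estimate already used in the proof of Theorem~\ref{thm:morrey-cross-interaction} gives $\|u\otimes v\|_{Z_p}\le\|u\|_{Y_p}\|v\|_{Y_p}$. Combining this with the Duhamel bound yields the claimed estimate for $B(u,v)$.
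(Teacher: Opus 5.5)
Your proposal is correct and follows the paper's proof essentially step for step: the $(t-s)^{-3/4}$ bound for $e^{(t-s)\Delta}\mathbb P\nabla\cdot$ from $L^3$ to $L^6$, domination of the one-sided Duhamel integral by the one-dimensional Riesz potential $I_{1/4}$ applied to the zero-extended $\|F(s)\|_{L^3}$, Adams' strong Morrey theorem with dimension parameter $1-\lambda_p=p/4$ and $1/p=2/p-(1/4)/(p/4)$ (strictness $p/2>1$ being $p>2$), and the H\"older product estimate for $B$. Your additional checks — Young's inequality for the Oseen kernel, and the fact that zero extension and restriction to $(0,T)$ do not increase the Morrey norms, which gives $T$-independence — make explicit details the paper only asserts.
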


\begin{proof}
The heat kernel gives
\[
\|e^{(t-s)\Delta}\mathbb P\nabla\!\cdot F(s)\|_6
\le C(t-s)^{-3/4}\|F(s)\|_3.
\]
Thus the time variable is controlled by the one-dimensional Riesz potential $I_{1/4}$.  In Adams' notation~\cite{Adams1975}, our Morrey parameter $\lambda_p$ corresponds to
\[
\Lambda=1-\lambda_p=\frac p4.
\]
With input exponent $p/2$ and $\alpha=1/4$, Adams' strong theorem gives
\[
I_{1/4}:\mathcal M^{p/2,\lambda_p}\longrightarrow
\mathcal M^{p,\lambda_p},
\qquad
\frac1p=\frac2p-\frac{1/4}{p/4}.
\]
The strict condition $p/2>1$ is exactly $p>2$.  Extending $F$ by zero outside $(0,T)$ and dominating the one-sided convolution by the full Riesz potential proves the first estimate.  The second follows from
\[
\|u\otimes v\|_{Z_p(T)}\le\|u\|_{Y_p(T)}\|v\|_{Y_p(T)}.
\]
\end{proof}

\begin{corollary}\label{cor:morrey-cross-duhamel}
Under the assumptions of Theorem~\ref{thm:morrey-cross-interaction}, for $2<p<4$,
\[
\|B(U_n,V_n)+B(V_n,U_n)\|_{Y_p}\longrightarrow0.
\]
\end{corollary}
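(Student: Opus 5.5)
The estimate follows by composing Lemma~\ref{lem:adams-duhamel} with Theorem~\ref{thm:morrey-cross-interaction}. By definition, $B(u,v)$ is the Duhamel term
\[
B(u,v)(t)=-\int_0^t e^{(t-s)\Delta}\mathbb P\nabla\cdot(u\otimes v)(s)\,ds .
\]
It is bilinear, so
\[
B(U_n,V_n)+B(V_n,U_n)=-\int_0^t e^{(t-s)\Delta}\mathbb P\nabla\cdot F_n(s)\,ds,
\qquad
F_n:=U_n\otimes V_n+V_n\otimes U_n .
\]
The first estimate of Lemma~\ref{lem:adams-duhamel} applies on any common interval $(0,T)$ with a constant independent of $T$, and gives
\[
\|B(U_n,V_n)+B(V_n,U_n)\|_{Y_p(T)}\le C_p\|F_n\|_{Z_p(T)}
\le 2C_p\,\|U_n\otimes V_n\|_{\mathcal M_t^{p/2,\lambda_p}L_x^3}.
\]
Here we used that the components of $U_n\otimes V_n$ are scalar products $U_n^iV_n^k$. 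Each such product has forcing norm at most the forcing norm of $|U_n|\,|V_n|$ up to a constant, since $|U_n^iV_n^k|\le|U_n||V_n|$ pointwise and the Morrey--Lebesgue norm is a lattice norm.

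It remains to check that $\||U_n||V_n|\|_{\mathcal M_t^{p/2,\lambda_p}L_x^3}\to0$. This is exactly Theorem~\ref{thm:morrey-cross-interaction}, read componentwise for vector fields. The proof of that theorem uses only the H\"older bound $\|ab\|\le\|a\|_{Y_p}\|b\|_{Y_p}$, density in $L^4_tL^6_x$, and the scaling and separation arguments, and all of these apply verbatim to $|U|$ and $|V|$. Indeed $|U|,|V|\in L^4_tL^6_x$, and they can be approximated by $|U^\varepsilon|,|V^\varepsilon|$ with $U^\varepsilon,V^\varepsilon$ smooth and compactly supported.

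Combining the two steps, the left-hand side is bounded by $2C_p\cdot o(1)$, which proves the corollary. The only point needing care is that the range $2<p<4$ is used: it is precisely the strict range in which Adams' strong estimate underlying Lemma~\ref{lem:adams-duhamel} holds. Outside that range the argument breaks down, while the cross-interaction statement of Theorem~\ref{thm:morrey-cross-interaction} itself holds for $2\le p\le4$. Also, $T$-independence of the constants allows working on the common existence interval, whose length may depend on $n$.
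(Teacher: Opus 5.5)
Your proposal is correct and follows the paper's proof, which simply composes Theorem~\ref{thm:morrey-cross-interaction} with the Adams Duhamel estimate of Lemma~\ref{lem:adams-duhamel}; your additional bookkeeping (bilinearity, the symmetrized forcing $F_n$, the $T$-independent constants) just makes that one-line argument explicit. The detour through $|U_n|\,|V_n|$ is harmless but unnecessary: each scalar component of $U\in E_{T_1}$ and $V\in E_{T_2}$ again lies in the corresponding space, so the theorem applies directly to every pair $U^i,V^k$.
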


\begin{proof}
Combine Theorem~\ref{thm:morrey-cross-interaction} with Lemma~\ref{lem:adams-duhamel}.
\end{proof}

\section{Further consequences and scope}

The results above separate three compactness mechanisms that are often grouped under the name Aubin--Lions.

First, the classical spatial compact embedding
\[
E_0\hookc E\hookrightarrow E_1
\]
controls strong convergence in the \emph{same Morrey norm}; no trace-space compactness is needed.  Second, compactness in $C([0,T];E)$ is governed exactly by the compact embedding of the trace space $X_\theta$ into $E$.  Third, on unbounded domains local compactness must be supplemented by tightness in order to exclude translation to infinity.

The Hilbert profile theorem above identifies the translation defect below the critical spatial exponent and, under uniform little-Morrey control, at the original time-Morrey exponent as well.  At the doubly critical target $\mathcal M_t^{2,\lambda}L_x^{2^*}$, arbitrary bounded evolutions admit additional spatial, temporal, and modulation defects.  The heat/Stokes theorem shows that parabolic dynamics remove these independent defects on the natural Hilbert trace subclass, and Theorem~\ref{thm:ns-morrey-profile} shows that the corresponding nonlinear Navier--Stokes profiles inherit an entire critical Morrey--Kato remainder scale.  A natural next problem is to combine these Morrey profile topologies with local $\varepsilon$-regularity in order to localize critical packets for arbitrary singular sequences; that question is not needed for the compactness theory proved here.

For nonlinear parabolic equations the same-Morrey compactness theorem is directly suited to approximation schemes.  For example, on a bounded Lipschitz domain $\Omega$, the choice
\[
E_0=W^{1,r}(\Omega),
\qquad E=L^r(\Omega),
\qquad E_1=W^{-1,r}(\Omega)
\]
gives
\[
\mathbb W_M^{p,\lambda}
\hookc
\mathcal M^{p,\lambda}(0,T;L^r(\Omega)).
\]
This is the strong convergence topology naturally needed to pass to many nonlinear terms while preserving the time-Morrey scale.

\section*{Funding}
This research did not receive any specific grant from funding agencies in the public, commercial, or not-for-profit sectors.

\section*{Declaration of competing interest}
The authors declare no competing interests.

\section*{Data availability}
No datasets were generated or analysed during the current study.

\section*{Declaration of generative AI and AI-assisted technologies in the manuscript preparation process}
During the preparation of this work, the authors used OpenAI ChatGPT for editorial assistance, literature organization, and checking the exposition and internal consistency of mathematical arguments. After using this tool, the authors reviewed and edited the content as needed and take full responsibility for the content of the publication.

\end{document}